\newtheorem{thm}{Theorem}
\newtheorem{cor}[thm]{Corollary}
\newtheorem{prop}[thm]{Proposition}
\newtheorem{lem}[thm]{Lemma}
\theoremstyle{definition}
\newtheorem{defn}{Definition}
\newtheorem{rem}[thm]{Remark}
\newtheorem{exmp}{Example}
\def\id{\mathop{\fam 0 id}\nolimits}
\def\Span{\mathop{\fam 0 Span}\nolimits}
\def\Ker{\mathop{\fam 0 Ker}\nolimits}
\def\Cend{\mathop{\fam 0 Cend}\nolimits}
\def\QDerr{\mathop{\fam 0 Dider}\nolimits}
\def\Diend{\mathop{\fam 0 Diend}\nolimits}
\def\End{\mathop{\fam 0 End}\nolimits}
\def\Curr{\mathop{\fam 0 Cur}\nolimits}
\def\lbar{\dashv }
\def\rbar{\vdash }
\def\Hmod{H\mbox{-}\mathrm{mod}}
\def\oo#1{\mathrel{{}_{(#1)}}}
\begin{document}


\title{The Tits---Kantor---Koecher construction for Jordan dialgebras}

\author[V.~Yu. Gubarev]{V.~Yu. Gubarev$^{1}$}
\author[P.~S. Kolesnikov]{P.~S. Kolesnikov$^{2*}$}

\address{$^1$Novosibirsk State University}

\email{vsevolodgu@mail.ru}

\address{$^{2*}$Sobolev Institute of Mathematics}

\email{pavelsk@math.nsc.ru}

\begin{abstract}
We study a noncommutative generalization of Jordan algebras called
Jordan dialgebras. These are algebras that satisfy the identities
$[x_1 x_2]x_3= 0$, $(x_1^2,x_2,x_3)=2(x_1,x_2,x_1x_3)$,
$x_1(x_1^2 x_2)=x_1^2(x_1 x_2)$; they are related with Jordan algebras in the
same way as Leibniz algebras are related to Lie algebras.
We present an analogue of the Tits---Kantor---Koecher construction
for Jordan dialgebras that provides an embedding of such an algebra
into Leibniz algebra.
\end{abstract}


\maketitle

\section{Introduction}\label{sec1}

Leibniz algebras
are the most investigated non-commutative analogues of Lie algebras.
A (left) Leibniz algebra $L$ is a linear space endowed with bilinear operation
$[\cdot,\cdot ]: L\times L \to L$ that satisfies (left) Leibniz identity
\cite{L1}
\begin{equation} \label{eq:LeibIdent}
  [x,[y,z]]=[[x,y],z] + [y,[x,z]].
\end{equation}

This is well-known that an arbitrary Lie algebra $L$ can be embedded into an
appropriate associative algebra $A$ assuming the Lie bracket on $L$
coincides with the commutator
on $A$. To get a similar embedding for Leibniz algebras, J.-L.~Loday and T.~Pirashvili
in \cite{LP} proposed the notion of an (associative) dialgebra
as a substitute for the class of associative algebras.
By definition, an associative dialgebra $D$
is a linear space endowed with two bilinear operations
$\lbar , \rbar : D\times D \to D$ that satisfy certain axioms.
In particular, the ``di-commutator'' $[x,y] = x\rbar y - y\lbar x$ satisfies
\eqref{eq:LeibIdent}.

Another class of dialgebras (alternative ones) appeared in \cite{Liu}.
It is also motivated by Leibniz algebras, namely, the alternativity condition
appears as a necessary and sufficient condition for embedding a (non-associative)
dialgebra $D$ into the Steinberg Leibniz algebra $\mathrm{stl}_3(D)$, which is a non-commutative
analogue of the result from \cite{Faulk}.

In \cite{Kol1}, the natural relation between dialgebras and conformal algebras was
found. Conformal algebras were introduced in \cite{K1} as a tool for investigating
vertex algebras. Since that, the theory of conformal algebras and their generalizations
(pseudo-algebras \cite{BDK}) has been separated as an independent research area.
By definition, a pseudo-algebra $C$
is a module over a cocommutative
Hopf algebra $H$ endowed with an $H$-bilinear map
$*: C\otimes C \to (H\otimes H)\otimes_H C$
called  pseudo-product.
A general categorical approach of \cite{GK}
allows to define what is an associative
(Lie, alternative, Jordan, etc.) pseudo-algebra.

Pseudo-algebras are related to dialgebras in the following way: there exists a
functor from the category of pseudo-algebras to the category of (non-associative)
dialgebras. Under this functor, associative (alternative) pseudo-algebras turn into
associative (alternative) dialgebras, Lie pseudo-algebras turn into Leibniz algebras.
This is a reason to define what is a variety $\mathrm{Var}$ of dialgebras, where
$\mathrm{Var} $ is a homogeneous variety of ordinary algebras defined by
a family of multilinear identities.

Conversely, an arbitrary $\mathrm{Var} $ dialgebra can be embedded into an appropriate
$\mathrm{Var} $ pseudo-algebra over $H$ provided that $H$ contains a non-zero
primitive element.

This is natural to expect that
if we start with an associative dialgebra $D$
and define new operation
\[
 x\circ y =  x\rbar y + y\lbar x, \quad x,y\in D,
\]
then the algebra $(D,\circ ) $ obtained would be representative
of a class of non-associative algebras that relates to the class of
Jordan algebras in the same way as Leibniz algebras relate to Lie algebras.
This idea was implemented in \cite{VF} (where these algebras
were called quasi-Jordan algebras).
However, the adequate formalization of the class of
algebras obtained in this way should involve one more identity, as it was shown
in \cite{Br2}.
We introduce the correct notion of the variety of Jordan dialgebras
which is a subvariety of quasi-Jordan algebras from \cite{VF}.
The same notion (under the name of semi-special quasi-Jordan algebras)
was also studied in~\cite{Br09}.

There are several approaches that lead to the same notion of
Jordan dialgebras: The construction of \cite{VF}, the operadic
approach related to conformal algebras \cite{Kol1},
and the ``representational" one of \cite{Pozh}.

In Section~\ref{sec2}, we state all necessary
definitions and notations related with conformal algebras. In the
exposition, we follow \cite{BDK} and \cite{K1}, however, the axioms of
conformal algebras are adjusted for nonzero characteristic of the ground field.

In Section \ref{sec3}, it is shown how to
assign a variety of dialgebras to an arbitrary variety of ordinary algebras \cite{Pozh}.
Here we also study the relations between dialgebras and conformal algebras.
In particular, we prove that an arbitrary dialgebra can be embedded into
a current conformal algebra (which strengthen the result of \cite{Kol1}).
This embedding allows to join a bar-unit to an arbitrary dialgebra of a class
$\mathfrak M$ if the corresponding class of ordinary algebras admits joining a unit.
For associative and alternative dialgebras it was shown in \cite{Pozh}.

Solvability and nilpotency of Jordan dialgebras
are studied in Section \ref{sec4}. It is shown that a finitely generated solvable
Jordan dialgebra is nilpotent, as it happens for Jordan algebras \cite{Zh}.
Here we also state an analogue of the Pierce decomposition for
Jordan dialgebras.

The main goal of this paper is to present an analogue of the Tits---Kantor---Koecher
(TKK) construction for Jordan dialgebras that prospectively
provides an embedding of such an algebra into a Leibniz algebra.

In Section~\ref{sec5} we build the main tool
that is used to implement this construction:
the notions of a di-endomorphism and a di-derivation.
They are based on the embedding of a Jordan
dialgebra into a Jordan conformal algebra.

In Section \ref{sec6}, an analogue of the TKK construction for Jordan
dialgebras is studied. Although a similar construction
for conformal algebras (and their generalizations, pseudo-algebras)
is known \cite{Zel00, Kol09}, we can not use it directly since
it is well-defined for finite pseudo-algebras only (corresponding
to the case of finite-dimensional dialgebras).

To get rid of the condition of finite dimension, we have to
state the TKK construction independently of the conformal algebra case.
However, the embedding of a Jordan dialgebra into a Jordan
conformal algebra is still involved into consideration. But we will show
that the Leibniz algebra obtained by means of the TKK construction
does not depend on the choice
of such embedding, so we may choose the simplest one, i.e., the embedding
into current conformal algebra built in Section \ref{sec3}.
We also show that a Jordan dialgebra is nilpotent (or strongly solvable)
if and only if its TKK construction is nilpotent (or solvable) Leibniz algebra.

\section{Preliminaries on pseudo-algebras}\label{sec2}
\subsection{Pseudo-algebras over a Hopf algebra}

Let $H$ be a Hopf algebra (the main example we will use is the
polynomial algebra $H=\Bbbk [T]$ with the canonical coproduct).
Consider the class $\Hmod $ of left unital modules over the algebra $H$.
Suppose $M_1,\dots, M_n, M\in \Hmod$.
Let us say that a $\Bbbk $-linear map
\begin{equation}     \label{eq:P_nH}
a: M_1\otimes \dots \otimes M_n \to H^{\otimes n}\otimes _H M
\end{equation}
is $H^{\otimes n}$-linear if
\[
a(h_1x_1,\dots , h_nx_n) = ((h_1\otimes \dots \otimes h_n)\otimes _H 1)a(x_1,\dots, x_n)
\]
for all $h_i\in H$, $x_i\in M_i$.
Here the space
$H^{\otimes n}=\underset{n}{\underbrace{H\otimes \dots \otimes H}}$
is considered as
the  outer product of regular right $H$-modules,
i.e., this is a right $H$-module with respect to the following action:
\[
(h_1\otimes \dots \otimes h_n)\cdot T
=\sum\limits_{i=1}^n h_1\otimes \dots \otimes h_{i-1}\otimes
 h_iT \otimes h_{i+1} \otimes \dots \otimes h_n,
 \quad h_i\in H.
\]

The class $\Hmod $ is a pseudo-tensor category \cite{BD}
(or multi-category in the sense of
\cite{La})
by means of the space of multi-morphisms
$P_n^{\Hmod }(M_1,\dots , M_n; M)$ defined as the space of all
$H^{\otimes n}$-linear maps \eqref{eq:P_nH}.
The details on the composition of such maps can be found in \cite{BDK}
or \cite{Kol1}.
This multi-category is symmetric provided that $H$ is cocommutative.

Given an operad $\mathcal O$, one may define an $\mathcal O$-algebra
in $\Hmod $ in the usual way as a functor $\mathcal O \to \Hmod $.
In particular, if $\mathcal O=\mathrm {Alg}$, where
$\mathrm{Alg}$ is the operad of binary trees (that
corresponds to the variety of all non-associative algebras),
 then the pseudo-algebra structure is completely
defined by $C\in \Hmod $ equipped with a map $*\in P_2^{\Hmod}(C,C; C)$.
This map (called pseudo-product) is the image of the
elementary binary tree with
two leaves. The pair $(C, *)$ is called a pseudo-algebra over $H$ \cite{BDK}.

In particular, if $\dim_{\Bbbk } H=1$ then $\Hmod $ is just the
multi-category of linear spaces, so the notion of a pseudo-algebra
over  $H$ coincides with the ordinary notion of an algebra over~$\Bbbk $.
If $H=\Bbbk [T]$, $\mathrm{char}\,\Bbbk =0$, then pseudo-algebra over
$H$ is the same as conformal algebra \cite{K1}.

If $\mathrm{Var}$ is a homogeneous variety of algebras defined
by a family of multilinear identities, then let us denote the corresponding
quotient operad of $\mathrm{Alg}$ by $\mathrm{VarAlg}$.
As in the case of ordinary algebras (see, e.g., \cite{GK}),
a pseudo-algebra $C$ over $H$
is said to be
$\mathrm{Var}$ pseudo-algebra if the corresponding
functor $\mathrm{Alg}\to \Hmod $ can be restricted to $\mathrm{VarAlg}$.
This approach allows to define associative, alternative, Lie, Jordan, and
other classical varieties of pseudo-algebras defined by multilinear
identities. In \cite{Kol2}, it was shown that for conformal algebras
this definition agrees with the one
from \cite{Ro1} that uses coefficient algebras.

\subsection{Current conformal algebras}

Let us call by ``conformal algebras'' all pseudo-algebras over
$H=\Bbbk [T]$ without a restriction on $\mathrm{char}\,\Bbbk $.

Assume $H=\Bbbk [T]$. It is easy to see that
the outer product $H\otimes H$ is a
free right $H$-module with the basis $\{T^n\otimes 1 \}_{n\ge 0}$
\cite{BDK}.
Therefore, for every conformal algebra $C$ and for all $a,b\in C$
there exists a unique expression
\[
a*b = \sum\limits_{n\ge 0} (T^n\otimes 1)\otimes _H c_n, \quad c_n\in C.
\]
Let us denote $c_n$ by $a\oo{n} b$. Thus, the pseudo-product on $C$
is completely
defined by a countable family of operations $\oo{n}: C\otimes C \to C$
such that for all $a,b\in C$ only a finite number of $a\oo{n} b$
is nonzero (locality property).
Moreover, the condition of $H^{\otimes 2}$-linearity is equivalent
to the following properties of these operations:
\begin{equation}\label{eq:ConfAx}
\begin{gathered}
Ta\oo{n} b = a\oo{n-1}b, \ n\ge 1, \quad Ta\oo{0} b = 0, \\
T(a\oo{n} b)= a\oo{n} Tb + Ta\oo{n} b, \ n\ge 0,
\end{gathered}
\end{equation}
for all $a,b\in C$.

We will also use the following operations. Given $a,b\in C$,
denote by
$\{a*b\}$ the element
$(\sigma_{12}\otimes _H \id_C)(a*b)\in H^{\otimes 2}\otimes _H C$,
where $\sigma_{12}$ is the permutation of tensor factors in $H^{\otimes 2}$.
Analogously,
\[
\{a*b\} = \sum\limits_{n\ge 0} (T^n\otimes 1)\otimes _H \{a\oo{n} b \},
\]
where
\[
\{a\oo{n} b\} = (-1)^n \sum\limits_{s\ge 0} \binom{n+s}{s} T^s(a\oo{n+s} b).
\]
The operations $\{\cdot \oo{n} \cdot\}$
satisfy the following properties:
\begin{equation}\label{eq:ConfAxRight}
\begin{gathered}
\{a\oo{n} Tb\} = \{a\oo{n-1}b\}, \ n\ge 1, \quad \{a\oo{0} Tb\} = 0, \\
T\{a\oo{n} b\}= \{a\oo{n} Tb\} + \{Ta\oo{n} b\}, \ n\ge 0.
\end{gathered}
\end{equation}

The simplest example of a conformal algebra can be constructed as follows.
Suppose $A$ is an ordinary algebra, and
consider the free $H$-module $C=H\otimes A$ equipped
with the following pseudo-product:
\[
(f\otimes a)*(h\otimes b) = (f\otimes h)\otimes _H (1\otimes ab),
\quad f,h\in H,\ a,b\in A.
\]
In particular,
\begin{equation}\label{eq:CurrN-Prod}
(1\otimes a)\oo{n} (1\otimes b) =\begin{cases}
   1\otimes ab, & n=0, \\
   0, & n>0.
   \end{cases}
\end{equation}
Then $(C,*)$ is a conformal algebra denoted by $\Curr A$
(current conformal algebra). If $A$ belongs to a variety
$\mathrm{Var}$ then $\Curr A$ is a $\mathrm{Var} $ conformal algebra.

\subsection{Conformal endomorphisms}

In the theory of ordinary algebras, an important role
belongs to the associative algebra of linear
transformations of a linear space. The
corresponding construction for conformal algebras (and, more
generally, pseudo-algebras) was proposed in \cite{K3} and
\cite{BDK}. In this subsection, we state all necessary
definitions with a restriction to the case
$H=\Bbbk[T]$.

Consider an $H$-module $M$. A conformal endomorphism $\varphi $ of $M$
 is a linear map
\[
\varphi: M\to (H\otimes H)\otimes_H M
\]
such that
$\varphi(hx) = ((1\otimes h)\otimes _H 1)\varphi(x)$
for all $h\in H$, $x\in M$.
The space of all conformal endomorphisms of $M$ is denoted by $\Cend M$.

It is easy to see that for every $\varphi \in \Cend M$ and for every
$x\in M$ there exists a unique expression
\[
\varphi(x) = \sum\limits_{n\ge 0} (T^n\otimes 1)\otimes_H \varphi_n(x)
\]
where $\varphi_n: M\to M$, $n\ge 0$, are $\Bbbk $-linear maps
and
\begin{equation}    \label{eq:CendAx}
 \begin{gathered}
 \varphi_n(x) =0, \quad n\gg 0, \\
 \varphi_n(Tx) = T\varphi_n(x) - \varphi_{n-1}(x), \quad n\ge 0
 \end{gathered}
\end{equation}
(hereinafter, we assume $\varphi_{-1}\equiv 0$).
Hence, $\varphi \in \Cend M$ can be identified with
a sequence of $\Bbbk $-linear maps $\varphi_n: M\to M$ such that
\eqref{eq:CendAx} holds.

It is easy to check that if $\varphi\in \Cend M$, $x\in M$, then
\begin{equation}\label{eq:CendTM_Comm}
\varphi_n(T^m x) = \sum\limits_{s\ge 0}(-1)^s
     \binom{m}{s} T^{m-s}\varphi_{n-s}(x)
\end{equation}
for all $n,m\ge 0$.

The space $\Cend M$ can be considered as a left $H$-module
my means of $(T\varphi)_n = \varphi_{n-1}$ for all
$\varphi \in \Cend M$, $n\ge 0$.
This $H$-module can be equipped by operations $\oo{n}: \Cend M\otimes \Cend M
\to \Cend M$ defined as follows:
\[
 (\varphi \oo{n} \psi )_m = \sum\limits_{s=0}^n (-1)^s \binom {m+s}{s}
 \varphi_{n-s} \psi _{m+s}, \quad n,m\ge 0.
\]
These operations on $\Cend M$ satisfy \eqref{eq:ConfAx}, but, in general,
do not have the locality property. However, if $M$
is a finitely generated $H$-module then $\Cend M$ is an associative
conformal algebra \cite{BDK}.

\section{Dialgebras}\label{sec3}

\subsection{Associative and alternative dialgebras}\label{subsec3.1}

A linear space $A$ with two bilinear operations
\[
\rbar , \lbar : A\times A \to A
\]
is called a dialgebra. Various particular classes of dialgebras
introduced in literature are motivated by their relations to
Leibniz algebras. A dialgebra $A$ is associative \cite{LP}
if it satisfies the identities
\begin{equation}\label{eq:0Ident}
   (x\lbar y)\rbar z = (x\rbar y)\rbar z, \quad x\lbar (y\rbar z)= x\lbar (y\lbar z),
\end{equation}
and
\begin{equation}\label{eq:AssDiasAx}
\begin{gathered}
 (x,y,z)_\rbar \equiv (x\rbar y)\rbar z - x\rbar (y\rbar z) = 0, \\
 (x,y,z)_\lbar \equiv (x\lbar y)\lbar z - x\lbar (y\lbar z) = 0, \\
 (x,y,z)_\times \equiv (x\rbar y)\lbar z - x\rbar (y\lbar z) = 0.
\end{gathered}
\end{equation}
This class of dialgebras is well investigated in \cite{L3}.
Such dialgebras play the role of associative envelopes of Leibniz algebras:
an associative dialgebra $A$ with respect to the operation
\[
[a,b] = a\rbar b - b\lbar a, \quad a,b\in A,
\]
satisfies \eqref{eq:LeibIdent}. The Leibniz algebra
obtained is denoted by
$A^{(-)}$.

A dialgebra $A$ is said to be alternative \cite{Liu} if it satisfies
the identities \eqref{eq:0Ident} and
\begin{equation}\label{eq:AltDiasAx}
\begin{gathered}
 (x,y,z)_\rbar + (y,x,z)_\rbar = 0, \quad
 (x,y,z)_\lbar + (x,z,y)_\lbar = 0, \\
 (x,y,z)_\lbar + (y,x,z)_\times = 0, \quad
 (x,y,z)_\times + (x,z, y)_\rbar = 0.
\end{gathered}
 \end{equation}

These definitions were motivated by their relations with Leibniz
algebras. A dialgebra that satisfies the identities \eqref{eq:0Ident} is called
0-dialgebra \cite{Kol1}.
Both associative and alternative dialgebras are 0-dialgebras.

Given a 0-dialgebra $A$, the space
\[
A_0 = \Span \{ a\rbar b - a\lbar b \mid a,b\in A \}
\]
is an ideal of $A$, and $\bar A= A/A_0$ is an ordinary algebra.
The space $A$ can be endowed with the following left and right
actions of $\bar A$:
\[
\bar a\cdot x = a\rbar x, \quad x\cdot \bar a = x\lbar a, \quad x,a\in A,
\]
where $\bar a $ stands for the image of $a$ in $\bar A$.

\subsection{Varieties of dialgebras and their relation to conformal algebras}

The following definition was proposed in \cite{Pozh}.
Suppose $\mathfrak M$ is a class of ordinary algebras.
A 0-dialgebra $A$ is called an $\mathfrak M$-dialgebra if
$\bar A\in \mathfrak M$ and the split null extension
$\hat A = \bar A\oplus A$ belongs to $\mathfrak M$.

If $\mathfrak M=\mathrm{Var}$ is a homogeneous variety of algebras defined
by multilinear identities then this definition coincides with
the operadic definition of a variety of dialgebras from \cite{Kol1}.

\begin{prop}[\cite{Kol1}]
 Suppose $C$ is a $\mathrm{Var}$ conformal algebra.
 Then the same space
 $C$ equipped with new operations $a\rbar b = a\oo{0} b$,
 $a\lbar b = \{a\oo{0} b\}$, $a,b\in C$,
 is a $\mathrm{Var}$ dialgebra.
\end{prop}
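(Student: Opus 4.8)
The plan is to verify the two conditions that, by the definitions recalled in Section~\ref{sec3}, make $(C,\rbar,\lbar)$ a $\mathrm{Var}$ dialgebra: first that it is a $0$-dialgebra, i.e.\ satisfies \eqref{eq:0Ident}, and second that it fulfils the dialgebra analogues of the defining poly-linear identities of $\mathrm{Var}$ (which, via the stated coincidence of Pozhidaev's definition with the operadic one, is the remaining requirement). The computational input I would isolate first is that, comparing $a*b=\sum_{n\ge0}(T^{n}\otimes1)\otimes_H(a\oo{n}b)$ with $\{a*b\}=(\sigma_{12}\otimes_H\id)(a*b)$ and re-expanding the latter in the basis $\{T^{n}\otimes1\}_{n\ge0}$ of $H\otimes H$, one gets
\[
\{a\oo{0}b\}=\sum_{n\ge0}T^{n}(a\oo{n}b),\qquad a,b\in C .
\]
In particular $a\lbar b\equiv a\rbar b\pmod{TC}$ and $C_0=\Span\{a\rbar b-a\lbar b\}\subseteq TC$, while \eqref{eq:ConfAx} and \eqref{eq:ConfAxRight} supply the reductions $(Ta)\oo{0}b=0$ and $\{a\oo{0}(Tb)\}=0$.

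With these at hand the $0$-dialgebra axioms are immediate. In $(x\lbar y)\rbar z=\{x\oo{0}y\}\oo{0}z$ I expand $\{x\oo{0}y\}=\sum_{n\ge0}T^{n}(x\oo{n}y)$; since $(Tw)\oo{0}z=0$, only the $n=0$ summand survives, giving $(x\oo{0}y)\oo{0}z=(x\rbar y)\rbar z$. Dually, in $x\lbar(y\lbar z)=\{x\oo{0}\{y\oo{0}z\}\}$ I expand $\{y\oo{0}z\}=\sum_{n\ge0}T^{n}(y\oo{n}z)$ and use $\{x\oo{0}(Tw)\}=0$ to collapse to $\{x\oo{0}(y\oo{0}z)\}=x\lbar(y\rbar z)$. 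Thus \eqref{eq:0Ident} holds, $C_0$ is an ideal, and $\bar C=C/C_0$ is an ordinary algebra; the associative-dialgebra identities \eqref{eq:AssDiasAx} (and their alternative counterparts) will fall out as special cases of the general argument below.

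For the variety part the plan is to push the conformal $\mathrm{Var}$-identities of $C$ forward. Fix a defining multilinear identity $f(x_1,\dots,x_n)=0$ of $\mathrm{Var}$, written $f=\sum_u\alpha_u u$ over the nonassociative monomials $u$ in $x_1,\dots,x_n$. Since $C$ is a $\mathrm{Var}$ conformal algebra, for all $a_1,\dots,a_n\in C$ the element $\sum_u\alpha_u\,u^{*}(a_1,\dots,a_n)$, obtained from each $u$ by replacing every multiplication with the pseudo-product and composing in $\Hmod$, vanishes in $H^{\otimes n}\otimes_H C$. For each $i$ I would choose the $H$-basis of $H^{\otimes n}$ consisting of the tensors $T^{k_1}\otimes\dots\otimes T^{k_n}$ whose $i$-th factor is $1$ (a basis, as one sees by passing to $\Bbbk[T_1,\dots,T_n]/(T_1+\dots+T_n)$) and extract the coefficient of $1^{\otimes n}$. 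The point to check is that this coefficient of $u^{*}(a_1,\dots,a_n)$ is exactly the dialgebra word $u^{(i)}(a_1,\dots,a_n)$ in which the operation at each node on the path from the root to the leaf $x_i$ is $\rbar$ when the path turns right there and $\lbar$ when it turns left (the operations inside subtrees not meeting this path being determined unambiguously thanks to \eqref{eq:0Ident}). Granting this, the vanishing of the conformal identity yields $\sum_u\alpha_u u^{(i)}\equiv0$ on $(C,\rbar,\lbar)$ for every $i=1,\dots,n$, which is precisely the requirement for a $\mathrm{Var}$ dialgebra. (Alternatively one could verify Pozhidaev's two conditions, $\bar C\in\mathrm{Var}$ and $\hat C=\bar C\oplus C\in\mathrm{Var}$, the input again being the conformal identities of $C$.)

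The step I expect to be the main obstacle is exactly that last claim: that extracting the pinned $1^{\otimes n}$-coefficient from a composite pseudo-product reproduces the correct dialgebra word, with $\oo{0}$ appearing on right turns and $\{\oo{0}\}$ (that is, $\lbar$) on left turns. This forces one to track how the composition law of the pseudo-tensor category redistributes the powers of $T$ among the tensor factors and how the permutations $\sigma_{12}$ — which is precisely what turns $a\oo{0}b$ into $\{a\oo{0}b\}$ — propagate through nested compositions; combined with the reductions $(Ta)\oo{0}b=0$ and $\{a\oo{0}(Tb)\}=0$ this is a finite but somewhat intricate bookkeeping. The rest of the argument is formal.
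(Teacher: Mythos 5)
The paper offers no proof of this proposition: it is quoted from \cite{Kol1}, so there is nothing internal to compare against. Your proposal is correct and reconstructs what is essentially the argument of that reference. The pivotal identity $\{a\oo{0}b\}=\sum_{n\ge 0}T^{n}(a\oo{n}b)$ is right (expand $1\otimes T^{n}=\sum_{k}(-1)^{k}\binom{n}{k}(T^{k}\otimes 1)\Delta(T)^{n-k}$ in the right $H$-module $H\otimes H$ and push the $\Delta(T)^{n-k}$ across $\otimes_H$), and with $(Ta)\oo{0}b=0$ and $\{a\oo{0}(Tb)\}=0$ your derivation of \eqref{eq:0Ident} is complete. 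The coefficient-extraction claim you flag as the main obstacle is indeed where the content lies, and it does work: for instance, for $u=(x_1x_2)x_3$ the composition rule gives $(a*b)*c=\sum_{n,m,k}\binom{m}{k}\bigl(T^{n+k}\otimes T^{m-k}\otimes 1\bigr)\otimes_H\bigl((a\oo{n}b)\oo{m}c\bigr)$, and extracting the $1^{\otimes 3}$-coefficient in the basis pinning the second tensor factor yields $\sum_{m}T^{m}\bigl((a\oo{0}b)\oo{m}c\bigr)=\{(a\oo{0}b)\oo{0}c\}=(a\rbar b)\lbar c$, exactly the predicted di-word $u^{(2)}$; pinning the first or third factor gives $(a\lbar b)\lbar c$ and $(a\rbar b)\rbar c$ respectively. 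The induction over the tree that you defer is routine given this base computation and the reductions you already isolated, so the only thing I would insist on in a written-up version is that induction, together with the remark (which you make) that \eqref{eq:0Ident} resolves the ambiguity of operations in subtrees off the path to $x_i$, so that the extracted coefficient is a well-defined di-monomial.
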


This dialgebra is denoted by $C^{(0)}$.
In \cite{Kol1}, a converse statement was proved:
a $\mathrm{Var }$ dialgebra can be embedded into $C^{(0)}$
for an appropriate $\mathrm {Var}$ conformal algebra $C$.
Using the definition of \cite{Pozh}, this statement can now be strengthen
as follows.

\begin{thm}\label{thm:CurrEmbedd}
Let $\mathfrak M$ be a class of ordinary algebras. Then an arbitrary
$\mathfrak M$-dialgebra can be embedded into a current
conformal algebra over an ordinary algebra from $\mathfrak M$.
\end{thm}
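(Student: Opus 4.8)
The plan is to realize the given $\mathfrak M$ dialgebra $A$ inside $C^{(0)}$ for $C = \Curr B$, where $B\in\mathfrak M$ is an ordinary algebra built from the structure already attached to $A$ by the $0$-dialgebra formalism. Recall that for a $0$-dialgebra $A$ we have the ordinary quotient algebra $\bar A = A/A_0$, which lies in $\mathfrak M$ by hypothesis, together with the $\bar A$-bimodule structure on $A$ given by $\bar a\cdot x = a\rbar x$ and $x\cdot\bar a = x\lbar a$. The split null extension $\hat A=\bar A\oplus A$ is an ordinary algebra in $\mathfrak M$, again by hypothesis. The natural candidate is to take $B = \hat A$ and to map $A$ into $\Curr(\hat A) = H\otimes\hat A$.

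First I would write down the candidate embedding. For $a\in A$ set
\[
\iota(a) = T\otimes \bar a + 1\otimes a \in H\otimes\hat A,
\]
where on the right $\bar a\in\bar A\subseteq\hat A$ and $a\in A\subseteq\hat A$ are the two components of $\hat A=\bar A\oplus A$. The map $\iota$ is clearly $\Bbbk$-linear and injective (the $1\otimes a$ part alone already is). Extend $\iota$ $H$-linearly; then $\iota$ lands in the free $H$-module $H\otimes\hat A$ and is an $H$-module embedding of the free $H$-module $H\otimes A$ generated by $A$. The point of the ``$T\otimes\bar a$'' correction term is that in $\Curr B$ one has $Ta\oo{0}b = 0$ and $a\oo{0}Tb$ governed by \eqref{eq:ConfAxRight}, so the right and left actions $\oo0$ and $\{\cdot\oo0\cdot\}$ see different components of $\iota(a)$, which is exactly what is needed to reproduce the two products $\rbar$ and $\lbar$ of $A$.

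Next I would verify that $\iota$ is a dialgebra homomorphism into $C^{(0)}$, i.e.\ that
\[
\iota(a\rbar b) = \iota(a)\oo{0}\iota(b),\qquad
\iota(a\lbar b) = \{\iota(a)\oo{0}\iota(b)\}
\]
hold inside $\Curr(\hat A)$. Using \eqref{eq:CurrN-Prod} together with \eqref{eq:ConfAx} and \eqref{eq:ConfAxRight} one computes $\iota(a)\oo0\iota(b)$: the $T\otimes\bar a$ term is killed on the left by $Ta\oo0 b=0$, the surviving contributions are $(1\otimes a)\oo0(T\otimes\bar b)$ and $(1\otimes a)\oo0(1\otimes b)$, and these reduce via the bimodule definition to $1\otimes(a\cdot\bar b)=1\otimes(a\lbar b)$ plus the term matching $\bar a\cdot$ of $\overline{a\rbar b}$; the analogous computation with $\{\cdot\oo0\cdot\}$ picks out the right action. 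Matching components in $\hat A=\bar A\oplus A$ and using that $\overline{a\rbar b}=\overline{a\lbar b}=\bar a\cdot\bar b$ in $\bar A$ (since $a\rbar b - a\lbar b\in A_0$) finishes the identity. The main obstacle is precisely this bookkeeping: one must keep careful track of which summand of $\hat A$ each term of the pseudo-product lands in, and check that the ``diagonal'' relations of a $0$-dialgebra — namely $(x\lbar y)\rbar z=(x\rbar y)\rbar z$ and $x\lbar(y\rbar z)=x\lbar(y\lbar z)$ from \eqref{eq:0Ident} — are automatically consistent with the image, so that $\iota$ is well defined as a map of $0$-dialgebras. Once $\iota$ is known to be an injective dialgebra homomorphism, the fact that $\Curr(\hat A)$ is a $\mathrm{Var}$ (more generally $\mathfrak M$) conformal algebra whenever $\hat A\in\mathfrak M$, noted right after \eqref{eq:CurrN-Prod}, completes the proof.
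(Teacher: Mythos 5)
Your overall strategy is the same as the paper's: embed $A$ into $\Curr\hat A$ for $\hat A=\bar A\oplus A\in\mathfrak M$ via a map that pairs the class $\bar a$ and the element $a$ with the polynomials $1$ and $T$. However, your map has the two tensor factors attached to the wrong components, and with your $\iota$ the verification genuinely fails. The correct map is $a\mapsto 1\otimes\bar a+T\otimes a$, whereas you take $\iota(a)=T\otimes\bar a+1\otimes a$. Compute $\iota(a)\oo{0}\iota(b)$ in $\Curr\hat A$: since $Tx\oo{0}y=0$, the term $T\otimes\bar a$ contributes nothing on the left; since two elements of the module part $A\subseteq\hat A$ multiply to zero in the split null extension, $(1\otimes a)\oo{0}(1\otimes b)=0$; and $(1\otimes a)\oo{0}(T\otimes\bar b)=T\bigl((1\otimes a)\oo{0}(1\otimes\bar b)\bigr)=T\otimes(a\lbar b)$ by \eqref{eq:ConfAx}. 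So $\iota(a)\oo{0}\iota(b)=T\otimes(a\lbar b)$, which cannot equal $\iota(a\rbar b)=T\otimes\overline{a\rbar b}+1\otimes(a\rbar b)$: the nonzero summand lies in $T\otimes A$ rather than $T\otimes\bar A$, the $1\otimes A$ component is missing entirely, and the operation produced is $\lbar$ instead of $\rbar$. Your sketched computation (``these reduce \dots to $1\otimes(a\lbar b)$ plus the term matching $\bar a\cdot$ of $\overline{a\rbar b}$'') does not track the $T$-degree or the $\bar A$-versus-$A$ component correctly, which is why the inconsistency is not visible in your write-up.

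The fix is exactly the swap: with $\psi(a)=1\otimes\bar a+T\otimes a$ the surviving terms of $\psi(a)\oo{0}\psi(b)$ are $(1\otimes\bar a)\oo{0}(1\otimes\bar b)=1\otimes\bar a\bar b=1\otimes\overline{a\rbar b}$ and $(1\otimes\bar a)\oo{0}(T\otimes b)=T\otimes(\bar a\cdot b)=T\otimes(a\rbar b)$, giving $\psi(a\rbar b)$; the mirror computation with $\{\cdot\oo{0}\cdot\}$ and \eqref{eq:ConfAxRight} gives $\psi(a\lbar b)$. The heuristic to remember is that $\oo{0}$ kills $T$ on the \emph{left}, so the left factor must carry its ``acting'' part ($\bar a$, which acts via $\bar a\cdot x=a\rbar x$) in $T$-degree zero. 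A minor additional point: there is no $H$-linear extension to perform, since $A$ is only a $\Bbbk$-space; $\psi$ is simply a $\Bbbk$-linear injection, and nothing about the $0$-dialgebra identities \eqref{eq:0Ident} needs separate checking for well-definedness.
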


\begin{proof}
Consider an $\mathfrak M$-dialgebra $A$ and let
$\hat A = \bar A\oplus A\in \mathfrak M$.
Denote $H=\Bbbk [T]$, and recall that
$\Curr \hat A = H\otimes \hat A$.
Then
the map
\[
\psi : a\mapsto 1\otimes \bar a + T\otimes a\in \Curr \hat A, \quad a\in A,
\]
is an injective homomorphism of dialgebras
$A\to (\Curr \hat A)^{(0)}$.
Indeed,
\begin{multline}\nonumber
\psi(a)\rbar \psi (b)
= (1\otimes \bar a + T\otimes a)\oo{0}  (1\otimes \bar b + T\otimes b) \\
= 1\otimes \bar a\bar b + T\otimes \bar a\cdot b
= 1\otimes \overline{a\rbar b} + T\otimes (a\rbar b) = \psi(a\rbar b)
\end{multline}
by \eqref{eq:ConfAx}, \eqref{eq:CurrN-Prod}.
The equation $\psi(a\lbar b)= \psi(a)\lbar \psi(b)$ can be proved
similarly my making use of \eqref{eq:ConfAxRight}.
\end{proof}

Recall that an element $e$ of a dialgebra $A$ is called a bar-unit \cite{Pozh},
if $e\rbar x = x\lbar e = x$ for all $x\in A$, and
$(e,x,y)_\lbar = (x,e,y)_\times = (x,y,e)_\rbar =0$ for all $x,y\in A$.
The following definition was proposed in \cite{Pozh}:
a class $\mathfrak M$-(di)algebras is called unital if
every $\mathfrak M$-(di)algebra can be embedded into
an $\mathfrak M$-(di)algebra with a (bar-)unit.

It was proved in \cite{Pozh} that the classes of
associative and alternative dialgebras are unital. Now we can
generalize this statement.

\begin{cor}\label{cor:Unital}
Let $\mathfrak M$ be a unital class of algebras. Then
the class of $\mathfrak M$-dialgebras is unital.
\end{cor}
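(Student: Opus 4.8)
The plan is to reduce the unitality of the class of $\mathfrak M$ dialgebras to the unitality of the underlying class $\mathfrak M$ of ordinary algebras by passing through current conformal algebras, for which joining a unit is essentially the same operation as joining a unit to the base algebra. First I would take an arbitrary $\mathfrak M$ dialgebra $A$ and form its split null extension $\hat A = \bar A \oplus A \in \mathfrak M$. By Theorem~\ref{thm:CurrEmbedd}, $A$ embeds into $\Curr \hat A = H \otimes \hat A$ via $\psi\colon a \mapsto 1\otimes \bar a + T\otimes a$, and $\Curr\hat A$ is an $\mathfrak M$ conformal algebra. Since $\mathfrak M$ is unital, I may embed $\hat A$ into an algebra $\hat A^+ \in \mathfrak M$ possessing a unit $1$. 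This induces an embedding of conformal algebras $\Curr\hat A \hookrightarrow \Curr \hat A^+$, hence an embedding $\psi\colon A \hookrightarrow (\Curr \hat A^+)^{(0)}$ of dialgebras, and $(\Curr\hat A^+)^{(0)}$ is an $\mathfrak M$ dialgebra by the Proposition.

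Next I would exhibit a bar-unit in $(\Curr\hat A^+)^{(0)}$. The natural candidate is $e = 1 \otimes 1 \in H \otimes \hat A^+ = \Curr\hat A^+$, where the first $1$ lies in $H = \Bbbk[T]$ and the second is the unit of $\hat A^+$. Using the current $n$-products \eqref{eq:CurrN-Prod}, one computes $e \rbar x = e\oo 0 x$ and $x \lbar e = \{x\oo 0 e\}$ for $x = \sum T^k \otimes a_k$: the axioms \eqref{eq:ConfAx} move the $T^k$ past the product, \eqref{eq:CurrN-Prod} collapses everything to the $n=0$ term, and the unit of $\hat A^+$ does the rest, giving $e\rbar x = x = x\lbar e$. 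The three associator conditions $(e,x,y)_\lbar = (x,e,y)_\times = (x,y,e)_\rbar = 0$ required of a bar-unit then follow because, on a current conformal algebra, all the $0$-products reduce to the (associative-enough) product on $\hat A^+$ with $1$ acting as a genuine two-sided unit; here one also uses \eqref{eq:ConfAxRight} to handle the left-bar operation $\{\cdot\oo 0\cdot\}$ symmetrically, exactly as in the proof of Theorem~\ref{thm:CurrEmbedd}.

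I expect the main obstacle to be bookkeeping rather than a conceptual difficulty: one must verify that the bar-unit axioms (not just $e\rbar x = x\lbar e = x$ but also the three associator identities) genuinely hold in $(\Curr\hat A^+)^{(0)}$, and that these are exactly the identities \eqref{eq:0Ident}-type relations that survive because $\hat A^+$ need not be associative but the relevant associators all involve the unit $1$ and hence vanish. A clean way to organize this is to observe that for a current conformal algebra $\Curr B$ the dialgebra $(\Curr B)^{(0)}$ has both operations $\rbar$ and $\lbar$ induced from the single product on $B$ in a way that makes $1\otimes 1_B$ a bar-unit whenever $1_B$ is a unit of $B$; this is a short direct check from \eqref{eq:CurrN-Prod}, \eqref{eq:ConfAx}, \eqref{eq:ConfAxRight}. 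Finally, since $\psi(A) \subseteq (\Curr\hat A^+)^{(0)}$ and the latter is an $\mathfrak M$ dialgebra with a bar-unit, $A$ has been embedded into a unital $\mathfrak M$ dialgebra, which is what the corollary asserts.
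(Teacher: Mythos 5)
Your proposal is correct and follows essentially the same route as the paper: embed $\hat A$ into a unital algebra $B\in\mathfrak M$, pass to $\Curr\hat A\subseteq\Curr B$ via Theorem~\ref{thm:CurrEmbedd}, and check that $1\otimes 1_B$ is a bar-unit of $(\Curr B)^{(0)}$. The paper leaves that last verification as "straightforward," while you spell out the computation with \eqref{eq:ConfAx}, \eqref{eq:ConfAxRight}, \eqref{eq:CurrN-Prod}; the content is the same.
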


\begin{proof}
Let $A$ be an $\mathfrak M$-dialgebra. Then $\hat A \in \mathfrak M$, so
we can find $B\in \mathfrak M$ such that $\hat A $ is a subalgebra of
$B$ and $B$ contains a unit~$e$.

Since $\Curr \hat A \subseteq \Curr B$, we have an embedding of
$A$ into $(\Curr B)^{(0)}$. This is straightforward to check
that $1\otimes e$ is a bar-unit of $(\Curr B)^{(0)}$.
\end{proof}

\subsection{Jordan dialgebras}

Let us consider the class of Jordan dialgebras over a field $\Bbbk $
such that $\mathrm{char}\,\Bbbk \ne 2,3$. In this case, the variety of
Jordan algebras is defined by multilinear identities
$x_1x_2 = x_2x_1$ and $J(x_1,x_2,x_3,x_4) = 0$,
where $J(x_1,x_2,x_3,x_4)$ is obtained by complete
linearization of the
Jordan identity $x_1(x_1^2 x_2)=x_1^2(x_1 x_2)$,
see, e.g., \cite[Section~3.3]{Zh}.

By the general scheme from \cite{Kol1}, the variety of Jordan dialgebras
is defined by \eqref{eq:0Ident} and the following identities:
\begin{equation}\label{eq:JordDialg}
\begin{gathered}
 x_1\rbar x_2 = x_2\lbar x_1, \\
 J(\dot x_1,x_2,x_3,x_4)=0,  \quad
 J(x_1, \dot x_2,x_3,x_4)=0, \\
 J(x_1,x_2,\dot x_3,x_4)=0,\quad
 J(x_1,x_2,x_3,\dot x_4)=0,
 \end{gathered}
\end{equation}
where $J(\dots, \dot x_i, \dots )$ denotes the dialgebra identity
obtained from $J$ by arranging operations
$\rbar $, $\lbar $ in such a way
that horizontal dashes are directed to the variable~ $x_i$.

The first identity in \eqref{eq:JordDialg} allows to determine a Jordan
dialgebra as an ordinary algebra with respect to the operation
$ab = a\rbar b$ (then $a\lbar b = ba$). Rewriting  \eqref{eq:JordDialg}
in terms of this operation leads to the identities
\begin{multline}\label{eq:dva}
x_1 (x_2 (x_3 x_4)) + (x_2 (x_1 x_3)) x_4
 + x_3 (x_2 (x_1 x_4)) \\
 =   (x_1 x_2)(x_3 x_4) + (x_1 x_3)(x_2 x_4)
 + (x_3 x_2)  (x_1 x_4)
\end{multline}
\begin{multline}
x_1((x_4 x_3)x_2)+x_4((x_3 x_1)x_2)+x_3((x_4 x_1)x_2) \\
=  (x_4 x_3)(x_1 x_2)+(x_1 x_3)(x_4 x_2)+(x_4 x_1)(x_3 x_2).
 \label{eq:tri}
\end{multline}
Note that the system of identities
\eqref{eq:0Ident}, \eqref{eq:dva}, \eqref{eq:tri}
is equivalent to
\begin{equation}   \label{eq:JorDiasAlg}
[x_1 x_2]x_3= 0, \quad
(x_1^2,x_2,x_3)=2(x_1,x_2,x_1x_3),
\quad
x_1(x_1^2 x_2)=x_1^2(x_1 x_2).
\end{equation}

\begin{rem}
If we apply the same scheme for Lie dialgebras, then
the variety obtained would coincide with the class of Leibniz algebras
\cite{Kol1}.
Therefore, the variety of Jordan dialgebras
relates to the variety of Jordan algebras in the same way
as Leibniz algebras relate to Lie algebras.

 In \cite{VF}, the variety of {\em quasi-Jordan algebras\/} was
 introduced as a class of algebras satisfying the first and third
 identities from \eqref{eq:JorDiasAlg}. By the reasons stated
 above, we suggest all three identities \eqref{eq:JorDiasAlg}
 to be a more adequate defining system of a  (non-commutative) dialgebra
 analogue of the Jordan algebras.
 Note that the second identity  in \eqref{eq:JorDiasAlg} was
 independently obtained in~ \cite{Br2}.
\end{rem}

\begin{exmp}
In \cite{VF}, it was shown that a quasi-Jordan algebra can be constructed
from a Leibniz algebra with an ad-nilpotent element as follows
(we state the construction for left Leibniz algebras).
If $L$ is a Leibniz algebra and $x\in L$ is an element
such that $[x,[x,[x,a]]] = 0$ for all $a\in L$, then the space
$L_x = L/\{ a\in L \mid [x,[x,a]]=0 \}$
with respect to the operation
$$
ab = [[xa]b], \quad a,b\in L,
$$
is a quasi-Jordan algebra. This is straightforward to check that the second identity
from \eqref{eq:JorDiasAlg} also holds in $L_x$, i.e.,
this is a Jordan dialgebra.
\end{exmp}

If $A$ is an arbitrary dialgebra, denote by
$A^{(+)}$ the same linear space endowed with the following
product:
\[
ab = a\rbar b + b\lbar a, \quad a,b\in A.
\]

\begin{exmp}
If $A$ is an alternative dialgebra then $A^{(+)}$ is
a Jordan dialgebra.
\end{exmp}

Indeed, by Theorem \ref{thm:CurrEmbedd}, $A$ can be embedded into
an alternative conformal algebra $C$. The anti-commutator
conformal algebra $C^{(+)} $ is a Jordan conformal algebra
\cite{Kol2}. Since the conformal 0-product in $C^{(+)}$ is defined
by $a\oo{0} b + \{b\oo{0} a \} = a\rbar b + b\lbar a$ in
$C^{(0)}$,
 $A^{(+)}$ is a subalgebra  of $(C^{(+)})^{(0)}$, i.e,
 a Jordan dialgebra.

\begin{exmp}\label{exmp:SplitNull}
Suppose $A$ is a Jordan algebra, and $M$ is a Jordan $A$-bimodule.
Then the space $J=A\oplus M$ equipped by the operation
\[
 (a+x)\otimes (b+y) \mapsto ab+ay,\quad a,b\in A, \ x,y\in M,
\]
is a Jordan dialgebra.
\end{exmp}

\begin{exmp}
Let $X$ be a finite dimensional linear space over the field
$\Bbbk$, and let $\tilde X$ be its isomorphic copy.
For $a\in X$, denote by $\tilde a $ its image in~ $\tilde X $.
Consider a symmetric bilinear form $f: X \otimes X \mapsto \Bbbk$.
Then the space
$J= \Bbbk\oplus X\oplus \tilde X$
is a Jordan dialgebra with respect to the operation
$$
(\alpha+x+\tilde a)(\beta +y+\tilde b)=\alpha
\beta+f(x+a,y+b)+\alpha y+\alpha \tilde b+\beta
(\tilde x+ \tilde a), \quad \alpha,\beta \in \Bbbk, \quad x,y,a,b \in
X .
$$
\end{exmp}

By Corollary \ref{cor:Unital}, the variety of Jordan dialgebras is unital,
so we may embed an arbitrary Jordan dialgebra $A$ into a Jordan
dialgebra $A_1$
with a left unit $e$ which belongs to the associative center of $A_1$.

\section{Solvability and nilpotency}\label{sec4}

Let $A$ be a non-associative algebra. Let us recall the following
notations:
$A^1=A^{\langle 1\rangle }=A$,
$A^n=\sum_{i=1}^{n-1}A^{n-i}A^i $,
$A^{\langle n\rangle }=A A^{\langle n -1\rangle }$.
Algebra
$A$ is said to be {\em nilpotent} (or {\em left nilpotent}) if $A^n=0$
(or $A^{\langle n\rangle }=0$) for some $n$.

Let us also define $A^{(1)}=A^2$,
$A^{(n)}=(A^{(n-1)})^2$,
$A^{[1]}=A^3$, $A^{[n]}=(A^{[n-1]})^3$.
Algebra $A$ is said to be
{\em solvable} ({\em cubic solvable}) if $J^{(n)}=0$ ($J^{[n]}=0$)
for some $n$.

Assume $J$ is a Jordan dialgebra.
Note that its solvable degrees $J^{(i)}$ are not in general
ideals of $J$, but cubic solvable degrees $J^{[i]}$ are.
However,
$J^{(2i)}\subseteq J^{[i]}\subseteq J^{(i)}$, so
solvability and cubic solvability are equivalent.

Let $\ell_a\in \End J$, $a\in J$, stands for the operator of left multiplication
in $J$, i.e., $\ell_a: x\mapsto ax$, $x\in J$. For a subset $A\subseteq J$
denote by $\ell_J (A)$ the
associative subalgebra of $\End J$ generated by all $\ell_a$, $a\in A$, and
let $\ell (J)$ stands for $\ell_J(J)$.

Defining identities of the variety of Jordan dialgebras
\eqref{eq:0Ident}, \eqref{eq:dva}, \eqref{eq:tri}
are equivalent to the following identities in $\ell (J)$:
\begin{gather}
 \label{eq:Op-raz}
  \ell_{ab}=\ell_{ba}, \\
\label{eq:Op-dva}
\ell_{t}\ell_{z} \ell_{y}+ \ell_{y}\ell_{z}\ell_{t}+\ell_{(yt)z}
  = \ell_{tz}\ell_{y}+\ell_{yt}\ell_{z}+\ell_{zy}\ell_{t}, \\
\label{eq:Op-tri}
[\ell_{y}, \ell_{tz}]+[\ell_{z}, \ell_{yt}]+[\ell_{t}, \ell_{zy}]=0.
\end{gather}

\begin{prop}\label{Thm:4_1}
A Jordan dialgebra $J$ is nilpotent if and only if it is
left nilpotent.
\end{prop}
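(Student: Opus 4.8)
The plan is to show that left nilpotency implies nilpotency (the converse being trivial since $J^{\langle n\rangle}\subseteq J^n$). So assume $J^{\langle m\rangle}=0$ for some $m$, i.e.\ every product of $m$ left multiplications annihilates $J$; equivalently $\ell(J)$ is a nilpotent associative algebra, say $\ell(J)^k=0$. I want to bound the ordinary lower central powers $J^n$. The key structural fact is the first identity in \eqref{eq:JorDiasAlg}, $[x_1x_2]x_3=0$, which says $\ell_{ab}=\ell_{ba}$, and more importantly that the ideal $J^2$ acts on $J$ in a controlled way: any product in $J$ of length $\ge 2$ lands inside $J^2$, and $J^2\cdot J^2$ is already reachable by left multiplications. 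So the main point is to show that an arbitrary (non-left-normed) product of $n$ elements of $J$ can be rewritten, modulo high left-normed powers, as a left-normed product.

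First I would establish the bracketing lemma: using $[x_1x_2]x_3=0$ (i.e.\ $(ab)c=(ba)c$) together with the operator identities \eqref{eq:Op-dva} and \eqref{eq:Op-tri}, one shows that for any $a,b\in J$ and any $u\in J^2$, the right multiplication $r_u\colon x\mapsto xu$ can be expressed through left multiplications: indeed $x(ab)=(xa)b$ fails in general, but $x\cdot(ab)$ and left-normed products differ by terms already involving $\ell$'s of higher degree. Concretely, \eqref{eq:Op-tri} gives $[\ell_y,\ell_{tz}]=-[\ell_z,\ell_{yt}]-[\ell_t,\ell_{zy}]$, which lets one trade a commutator $[\ell_y,\ell_{tz}]$ (a ``bad'' rearrangement) for a sum of commutators of the same total operator-degree; iterating, any monomial in the $\ell$'s equals a left-normed one modulo the ideal generated by longer monomials. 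The upshot is: there is a function $N(n)$ such that $J^{N(n)}\subseteq J^{\langle n\rangle}\cdot J + \text{(shorter-looking but operator-longer terms)}$, and because $\ell(J)$ is nilpotent this process terminates, giving $J^{N}=0$ for $N$ depending only on $m$ (or $k$).

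The cleaner route, which I would actually write up, is via the associative algebra $\ell_J(J)$ acting on $J$: left nilpotency of $J$ is exactly nilpotency of the action, hence (since $J$ is a faithful-enough module over $\ell(J)$ on the quotient $J/\mathrm{Ann}$) nilpotency of $\ell(J)$ as an associative algebra. Then I claim $J^n\subseteq \sum_{i+j\ge \lceil n/c\rceil}\ell(J)^i\, J^{\langle j\rangle}$ for a universal constant $c$, proved by induction on $n$ using \eqref{eq:Op-raz}--\eqref{eq:Op-tri} to push all multiplications to the left: a product $J^{n-i}J^i$, when the left factor is itself a product, contributes $\ell_{(\text{something})}$ applied to a shorter term, and the identities guarantee the ``something'' can be normalized. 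Once $\ell(J)^k=0$ and $J^{\langle m\rangle}=0$, this sum is empty for $n$ large, so $J$ is nilpotent.

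The main obstacle is the combinatorial normalization step: showing that every arrangement of parentheses on a product of $n$ generators reduces, modulo operator-longer terms, to the left-normed arrangement, using only the degree-$3$ operator identities \eqref{eq:Op-dva} and \eqref{eq:Op-tri}. This is where the special role of $[x_1x_2]x_3=0$ is essential — it kills the obstruction $(ab)c-(ba)c$ outright — and where one must be careful that the rewriting is well-founded (each step strictly decreases, say, the depth of the rightmost deepest subproduct while not increasing total operator length beyond a controlled bound). I expect this to occupy the bulk of the proof; the reduction to nilpotency of $\ell(J)$ and the final counting are routine once the normalization is in hand.
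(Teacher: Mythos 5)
Your write-up identifies the right raw ingredients (the identity $[x_1x_2]x_3=0$ and the operator relations \eqref{eq:Op-raz}--\eqref{eq:Op-tri}), but the entire combinatorial core of your argument is asserted rather than proved: the claim that every monomial in the $\ell$'s reduces to a left-normed one ``modulo operator-longer terms,'' the well-foundedness of that rewriting, and the inclusion $J^n\subseteq \sum_{i+j\ge\lceil n/c\rceil}\ell(J)^i J^{\langle j\rangle}$ with a ``universal constant $c$'' are all left as things you ``expect to occupy the bulk of the proof.'' That is precisely the hard part, so what you have is a plan, not a proof. Worse, the quantitative form you posit is almost certainly too strong: already for ordinary Jordan algebras the known comparison between the two filtrations is exponential, $\bar J^{2^n}\subseteq \bar J^{\langle n\rangle}$, not linear, so any normalization procedure of the kind you describe cannot deliver a universal linear $c$, and you give no termination measure that actually works.

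The paper avoids this normalization entirely. The first identity of \eqref{eq:JorDiasAlg} says exactly that the kernel $[J,J]=\Span\{ab-ba\}$ of the projection $J\to\bar J$ annihilates $J$ under left multiplication, and $\bar J$ is an honest Jordan algebra, for which the inclusion $\bar J^{2^n}\subseteq\bar J^{\langle n\rangle}$ is already available in \cite{Zh}. Pulling this back gives $J^{2^n}\subseteq J^{\langle n\rangle}+[J,J]$, hence $J^{2^n}J\subseteq J^{\langle n\rangle}J$ because the error term $[J,J]$ is killed when it stands as the left factor; a short induction on $n$ then shows $J^{\langle n\rangle}=0$ implies $J^{2^{n+1}}=0$. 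The one idea your proposal is missing is therefore: do not redo the rewriting theory inside $\ell(J)$ --- quotient by $[J,J]$, quote the Jordan-algebra result for $\bar J$, and use $[x_1x_2]x_3=0$ to control the difference between $J$ and $\bar J$.
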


\begin{proof}
It suffices to show that
if $J^{\langle n\rangle }=0$
then $J^{n2^{n}}=0$.
Note that $\bar J$ is a Jordan algebra, so
$\bar J^{2^n} \subseteq \bar J^{\langle n\rangle}$
\cite[Section~4.1]{Zh}.
Therefore,
$J^{m}J \subseteq J^{\langle n\rangle}J=0$
for all $m\ge 2^n$.

Now prove that $J^{k2^n}\subseteq J J^{(k-1)2^n}$
for all $k\ge 2$.
Indeed,
\[
J^{k2^{n}}=\sum_{i+j=k2^{n}} J^i J^j \subseteq
\sum_{j=1}^{(k-1)2^{n}} J^{2^{n}}J^j
+
\sum_{i=1}^{2^{n}}J^i J^{(k-1)2^{n}} \subseteq
J J^{(k-1)2^{n}}=0.
\]
In particular,
\[
 J^{n2^{n}} \subseteq J J^{(n-1)2^{n}}\subseteq
  J(J J^{(n-2)2^{n}})\subseteq\dots \subseteq
   \underset{n-1}{\underbrace{J(J(\dots  (J}} J^{2^n})\dots ))
  \subseteq J^{\langle n\rangle }=0.
\]
\end{proof}

There exist Jordan dialgebras which are right nilpotent but not left nilpotent.

\begin{exmp}
Let $X=\{x_1,x_2,\dots \}$ be a countable alphabet, $M$ be the linear
span of all words $u=x_{i_1}\dots x_{i_k}$ in $X$, $k\ge 0$
 ($u$ may be an empty word),
$1\le i_1<\dots < i_k$, and let $A$ be the linear span
of a countable set $D=\{\partial_1, \partial_2, \dots \}$.
Assume $A $ is a Jordan algebra with trivial multiplication
($\partial_i\partial_j = 0$), and consider
\[
A\otimes M \to M,
\quad
\partial_i\otimes u \mapsto
  \begin{cases}
      0, & \mbox{if $x_i$ does not appear in $u$}, \\
      (-1)^s vw, & \mbox{if $u=vx_i w$ and $s$ is the length of $v$}.
  \end{cases}
\]
Since $\partial_i(\partial_j u ) = -\partial_j(\partial_i u)$,
$M$ is a bimodule over the Jordan algebra $A$
(i.e., the split null extension $A\oplus M$ is a Jordan algebra).
Therefore, we may construct a Jordan  dialgebra $J$
as in Example~\ref{exmp:SplitNull}.
This dialgebra is right nilpotent (in particular,
solvable) since $J^2\subseteq M$, $MJ=0$.
But $J$ is not left nilpotent since
for every $n\ge 1$ we have
$\partial_{n-1}  (\dots (\partial_2(\partial_1 u))\dots )=x_n\ne 0$
if $u=x_1\dots x_n$.
\end{exmp}

\begin{thm}\label{thm:Theorem4}
A finitely generated solvable
  Jordan dialgebra is nilpotent.
\end{thm}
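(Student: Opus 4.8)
The plan is to reduce the statement to the corresponding theorem for ordinary Jordan algebras, namely that a finitely generated solvable Jordan algebra is nilpotent \cite{Zh}, and then use Proposition~\ref{Thm:4_1} to pass from left nilpotency back to nilpotency. So the first step is to take a finitely generated LJ-algebra $J$ that is solvable, say $J^{(m)}=0$, and to analyze the Jordan algebra $\bar J=J/J_0$, where $J_0=\Span\{ab-ba\mid a,b\in J\}$ is the ideal from the 0-dialgebra picture. Since $\bar J$ is a homomorphic image of $J$, it is finitely generated and solvable, hence by \cite{Zh} it is nilpotent: $\bar J^{\langle N\rangle}=0$ for some $N$. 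Lifting this, we get $J^{\langle N\rangle}\subseteq J_0$, i.e. every $N$-fold left-normed product lands in $J_0$.

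The second step is to control $J_0$ itself. The key observation is that $J_0$ is acted on trivially from one side: by the first identity in \eqref{eq:JorDiasAlg}, $[x_1x_2]x_3=0$, so $J_0\cdot J=0$, equivalently $\ell_u=0$ for every $u\in J_0$. Combined with $J^{\langle N\rangle}\subseteq J_0$, this gives $J^{\langle N+1\rangle}=J\cdot J^{\langle N\rangle}\subseteq J\cdot J_0$; but I still need to bound $J\cdot J_0$, i.e. show $J_0$ is a nilpotent ideal under the left action. For this I would use the operator identities \eqref{eq:Op-raz}--\eqref{eq:Op-tri}: since $\ell_u=0$ for $u\in J_0$, specializing the variables in \eqref{eq:Op-dva} so that some products fall into $J_0$ yields relations among the $\ell_a$'s modulo $\ell_J(J_0$-products$)$. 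More directly, I expect that $J\cdot J_0$ together with iterated left multiplications generates a nilpotent piece because $\ell(J)$ modulo the ideal generated by $\{\ell_a : a\in J^{\langle N\rangle}\}$ is a quotient of $\ell(\bar J)$, which is nilpotent as an associative algebra (the multiplication algebra of a nilpotent Jordan algebra is nilpotent). Thus $\ell(J)$ is nilpotent as an associative algebra, say $\ell(J)^K=0$.

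Once $\ell(J)$ is nilpotent, left nilpotency of $J$ is immediate: $J^{\langle K+1\rangle}=\ell(J)^K\cdot J=0$. Then Proposition~\ref{Thm:4_1} upgrades this to genuine nilpotency $J^{2^{K+2}}=0$, completing the proof. I would organize the write-up as: (i) pass to $\bar J$ and invoke \cite{Zh}; (ii) show $\ell(J)$ is an associative nilpotent algebra, using $\ell_u=0$ for $u\in J_0$, the inclusion $J^{\langle N\rangle}\subseteq J_0$, and that $\ell(\bar J)$ is nilpotent; (iii) deduce left nilpotency and apply Proposition~\ref{Thm:4_1}.

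The main obstacle is step~(ii): proving that $\ell(J)$ is nilpotent rather than merely that its image in $\ell(\bar J)$ is. The subtle point is that $\ell_a$ for $a\in J$ and $\ell_a$ for $a'\in J$ with $a-a'\in J_0$ need not be equal as operators on $J$ (only their restrictions governed by $\bar J$ agree), so I cannot simply say $\ell(J)\cong\ell(\bar J)$. Instead I must show that the kernel of the natural surjection $\ell(J)\twoheadrightarrow\ell(\bar J)$ is a nilpotent ideal — it is spanned by products of $\ell_a$'s at least one of whose arguments can be taken in $J_0$ modulo lower terms — and then combine nilpotency of the kernel with nilpotency of the quotient. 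This is where the operator identities \eqref{eq:Op-dva}, \eqref{eq:Op-tri} do the real work, letting one rewrite a long word $\ell_{a_1}\cdots\ell_{a_K}$ so that, using $J^{\langle N\rangle}\subseteq J_0$ and $\ell_u=0$ for $u\in J_0$, it collapses once $K$ is large enough.
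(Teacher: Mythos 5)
Your skeleton --- reduce to nilpotency of the left multiplication algebra $\ell(J)$, use \cite{Zh} on $\bar J$ to place a power of $J$ inside $J_0=[J,J]$ where $\ell$ vanishes, and finish with Proposition~\ref{Thm:4_1} --- is exactly the paper's. The gap is in your step~(ii), and you have located it yourself, but the route you sketch for closing it does not work. The intermediate claim that ``$\ell(J)$ modulo the ideal generated by $\{\ell_a : a\in J^{\langle N\rangle}\}$ is a quotient of $\ell(\bar J)$'' is vacuous: since $J^{\langle N\rangle}\subseteq J_0$ and $\ell_u=0$ for every $u\in J_0$, that ideal is already zero inside $\ell(J)\subseteq \End J$; moreover the natural map goes the other way ($\ell(\bar J)$ is a quotient of $\ell(J)$, obtained by restricting the action to $J/J_0$, which is legitimate because every $\ell_a$ preserves the ideal $J_0$). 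For the same reason, your description of the kernel $K$ of $\ell(J)\twoheadrightarrow \ell(\bar J)$ as ``spanned by products of $\ell_a$'s at least one of whose arguments can be taken in $J_0$'' is incorrect: all such products are identically zero. The kernel consists of combinations of words $\ell_{b_1}\cdots\ell_{b_d}$ whose \emph{total} action happens to land in $J_0$, and the obvious estimate $K^2J\subseteq K(J_0)\subseteq J_0$ produces no collapse, because $\ell(J)$ preserves $J_0$ without annihilating it. Proving $K$ nilpotent is therefore not easier than proving $\ell(J)$ nilpotent; the kernel--quotient decomposition buys nothing, and the sentence ``the operator identities let one rewrite a long word so that it collapses'' is a restatement of the theorem, not a proof.

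The paper closes this step by a different device: $\ell(J)$ is a homomorphic image of the abstract associative algebra $F\ell(J)$ presented by generators $\ell_b$ and the relations \eqref{eq:Op-raz}, \eqref{eq:Op-dva} together with $\ell(J^m)=0$ (where $J^m\subseteq[J,J]$ comes from nilpotency of the finitely generated solvable Jordan algebra $\bar J$), and these relations are precisely the hypotheses under which the argument of \cite{Zh} establishes local nilpotency (Lemma~\ref{lem:ZSSS}). Finite generation then enters a second time: modulo $\ell(J^m)=0$ the algebra $\ell(J)$ is generated by the finitely many $\ell_b$ with $b$ a word in $X$ of length less than $m$, so local nilpotency gives nilpotency of $\ell(J)$, hence left nilpotency of $J$, hence nilpotency by Proposition~\ref{Thm:4_1}. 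So the hard combinatorics is outsourced to a citable lemma rather than to the $\bar J$--versus--$J_0$ splitting; without importing that lemma or supplying the word-rewriting argument yourself, your proof is incomplete at its central step. If you want a reduction to the ordinary Jordan case that genuinely avoids the operator machinery, the cleaner object to pass to is not $\bar J$ but the split null extension $\hat J=\bar J\oplus J$, which is an ordinary Jordan algebra, is finitely generated and solvable together with $J$, and satisfies $\hat J^{\,n}=\overline{J^n}\oplus J^n$, so that \cite{Zh} applied to $\hat J$ yields $J^n=0$ directly.
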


\begin{proof}
Let $J$ be a solvable Jordan dialgebra generated by a finite set $X$.
Since the statement is true for Jordan algebras \cite[Section~4.3]{Zh},
$J^m\subseteq [J,J]$ for some natural $m\ge 1$.
It follows from Proposition \ref{Thm:4_1} that $J$ is nilpotent if and only
if $\ell (J)$ is nilpotent.
Note that $\ell (J^m) = 0$ by \eqref{eq:Op-raz}.

The algebra $\ell (J)$ is spanned by
words of the form
\begin{equation}\label{eq:L_Op}
w= \ell_{b_1} \ell_{b_2}\ldots \ell_{b_d} \in \ell(J),\quad b_i\in J,
\end{equation}
and we may assume that $b_i$ are non-associative words in $X$
of length $k_i<m$.

Therefore, $\ell (J)$ is a homomorphic
image of an associative algebra $F\ell(J)$ generated by the set
$\{ \ell_b \mid b \in J \}$
with the defining relations \eqref{eq:Op-raz}, \eqref{eq:Op-dva},
and $\ell (J^m)=0$.
These relations
are the only conditions required
in \cite{Zh} to prove the following statement.

\begin{lem}[{\cite[Section~4.3]{Zh}}]\label{lem:ZSSS}
The algebra $F\ell(J)$ is locally nilpotent.
\end{lem}

The algebra $\ell (J) $ is the image of a subalgebra in
$F\ell (J)$
generated by $\ell _b$, $b\in Y=X\cup X^2\cup\dots \cup X^{m-1}$,
$Y$ is a finite set.
Therefore, $\ell (J)$ is nilpotent,  hence, $J$ is nilpotent.
\end{proof}

\begin{cor}\label{cor:4_3}
Let $J$ be an Jordan dialgebra and let $A$ be its
locally nilpotent subalgebra.
Then $\ell_J(A) \subseteq \ell(J) $ is locally nilpotent.
\end{cor}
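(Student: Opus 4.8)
The plan is to reduce the statement to Lemma~\ref{lem:ZSSS} (local nilpotency of $F\ell(J)$) by carefully tracking how the generators $\ell_a$, $a\in A$, interact under the relations \eqref{eq:Op-raz}, \eqref{eq:Op-dva} inside $\ell(J)$. Fix a finite subset $a_1,\dots,a_r\in A$; I want to show that the subalgebra of $\ell(J)$ generated by $\ell_{a_1},\dots,\ell_{a_r}$ is nilpotent. Since $A$ is locally nilpotent, the subalgebra $A_0=\langle a_1,\dots,a_r\rangle$ of $J$ it generates is nilpotent, say $A_0^N=0$ for some $N$. The first step is the standard spanning observation (as in the proof of Theorem~\ref{thm:Theorem4}): $\ell_J(A_0)$ is spanned by words $w=\ell_{b_1}\ell_{b_2}\cdots\ell_{b_d}$ where each $b_i$ is a non-associative word in $a_1,\dots,a_r$; and using \eqref{eq:Op-raz}, which lets us replace $\ell_{uv}$ by $\ell_{vu}$, together with the fact that $A_0^N=0$ kills every word of length $\ge N$, we may assume each $b_i$ lies in $Y=\{a_1,\dots,a_r\}\cup A_0^{\langle 2\rangle}\cup\dots$, i.e.\ among the finitely many non-associative words in the $a_i$ of length $<N$. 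So $\ell_J(A_0)$ is generated by a finite set $\{\ell_b\mid b\in Y\}$.

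Next I would exhibit $\ell_J(A_0)$ as a homomorphic image of the corresponding finitely generated subalgebra of $F\ell(J)$. Concretely, consider the subalgebra $R\subseteq F\ell(J)$ generated by $\{\ell_b\mid b\in Y\}$. The relations \eqref{eq:Op-raz}, \eqref{eq:Op-dva} hold in $F\ell(J)$ by construction, and the relation $\ell(A_0^{N})=0$ — meaning $\ell_b=0$ in $F\ell(J)$ for $b\in J$ a product involving $\ge N$ of the $a_i$ — either holds in $F\ell(J)$ already or can be imposed; in any case the natural surjection $F\ell(J)\twoheadrightarrow\ell(J)$ carries $R$ onto $\ell_J(A_0)$. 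By Lemma~\ref{lem:ZSSS}, $F\ell(J)$ is locally nilpotent, so its finitely generated subalgebra $R$ is nilpotent; hence its image $\ell_J(A_0)$ is nilpotent, and in particular $\ell_{a_1}\cdots\ell_{a_k}=0$ for all products of length $\ge$ the nilpotency class. Since $a_1,\dots,a_r$ were arbitrary, $\ell_J(A)$ is locally nilpotent.

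The main obstacle, and the point that needs care, is the legitimacy of the reduction ``$b_i$ may be taken of bounded length'': in Theorem~\ref{thm:Theorem4} this used that $J^m\subseteq[J,J]$ so that $\ell(J^m)=0$ via \eqref{eq:Op-raz}, whereas here the analogous fact must come from $\bar{A_0}$ (the image of $A_0$ in the Jordan algebra $\bar J$) being a nilpotent Jordan algebra — so that high powers of $A_0$ land in $[J,J]$ and $\ell$ annihilates them. One must check that $\bar{A_0}$ is indeed nilpotent: it is a finitely generated subalgebra of a locally nilpotent Jordan algebra (the image of $A$ under $J\to\bar J$ is locally nilpotent since $A$ is), hence nilpotent. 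Apart from this bookkeeping, the argument is a routine adaptation of the proof of Theorem~\ref{thm:Theorem4}, with $J$ replaced by the subalgebra generated by finitely many elements of $A$ throughout; no new identity or construction is needed.
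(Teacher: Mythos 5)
Your argument is correct and is precisely the route the paper intends (the corollary is stated without proof as an immediate consequence of Lemma~\ref{lem:ZSSS} and the argument of Theorem~\ref{thm:Theorem4}): pass to the nilpotent subalgebra $A_0$ generated by finitely many elements of $A$, observe that $\ell_J(A_0)$ is generated by the $\ell_b$ with $b$ in a finite spanning set of $A_0$, and invoke the local nilpotency of the presented algebra. The only cosmetic adjustment is that the presented algebra to use is $F\ell(A_0)$ --- generators $\ell_b$, $b\in A_0$, with relations \eqref{eq:Op-raz}, \eqref{eq:Op-dva} restricted to $A_0$ and $\ell(A_0^N)=0$, the last relation being automatic because $A_0^N=0$ already in $J$ --- which renders your closing detour through $\overline{A_0}$ and $[J,J]$ unnecessary.
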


Recall that a locally nilpotent radical of an algebra $A$ is
a locally nilpotent ideal $I$ such that $A/I$ has no nonzero
locally nilpotent ideals.

\begin{cor}
Every Jordan dialgebra has locally nilpotent radical.
\end{cor}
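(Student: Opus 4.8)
The plan is to obtain the locally nilpotent radical of an LJ-algebra $J$ by pulling back the corresponding radical of the associated Jordan algebra. Recall that the commutator ideal $[J,J]=\mathrm{Span}\{ab-ba\mid a,b\in J\}$ is an ideal of $J$ and that $\bar J=J/[J,J]$ is a Jordan algebra. The structural fact I would exploit is that, by the identity $[x_1x_2]x_3=0$ in \eqref{eq:JorDiasAlg}, one has $[J,J]\cdot J=0$, and in particular $[J,J]^2=0$.

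First I would let $\bar R$ be the locally nilpotent radical of the Jordan algebra $\bar J$ (which exists by the classical theory of Jordan algebras \cite{Zh}) and let $R\subseteq J$ be its full preimage under $J\to\bar J$; this is an ideal of $J$ containing $[J,J]$, with $R/[J,J]=\bar R$ and $J/R\cong\bar J/\bar R$. The heart of the argument is to show that $R$ is locally nilpotent. Let $B\subseteq R$ be a subalgebra generated by finitely many elements. Its image $\bar B\subseteq\bar R$ is a finitely generated subalgebra of a locally nilpotent algebra, hence nilpotent, say $\bar B^{\,m}=0$; equivalently $B^m\subseteq[J,J]$. Since $B^{(k)}\subseteq B^{2^k}$ for every $k$, picking $k$ with $2^k\ge m$ gives $B^{(k)}\subseteq B^m\subseteq[J,J]$ and therefore $B^{(k+1)}=(B^{(k)})^2\subseteq[J,J]^2=0$. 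Thus $B$ is a finitely generated solvable LJ-algebra, so $B$ is nilpotent by Theorem~\ref{thm:Theorem4}. Hence every finitely generated subalgebra of $R$ is nilpotent, i.e. $R$ is locally nilpotent.

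It remains to observe that $J/R\cong\bar J/\bar R$ contains no nonzero locally nilpotent ideal: such an ideal would correspond under this isomorphism to a nonzero locally nilpotent ideal of $\bar J/\bar R$, contradicting the choice of $\bar R$. Consequently $R$ is the locally nilpotent radical of $J$. I expect the only point requiring care to be the transfer of the radical along the quotient $J\to\bar J$, that is, the fact that the preimage of a locally nilpotent ideal is again locally nilpotent; this is exactly where Theorem~\ref{thm:Theorem4} does the work, via the elementary observation that $B^m\subseteq[J,J]$ together with $[J,J]^2=0$ already makes $B$ solvable. (A self-contained variant avoiding an explicit appeal to the Jordan radical proves instead, by the same reduction, that the sum of two locally nilpotent ideals of $J$ is locally nilpotent, and then takes $R$ to be the sum of all locally nilpotent ideals of $J$.)
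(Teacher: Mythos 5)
Your proof is correct. The paper itself gives no detailed argument here: it only remarks that, by means of Theorem~\ref{thm:Theorem4}, the proof is ``completely similar to the one for Jordan algebras'' in \cite{Zh}, which is the classical scheme of showing that the sum of two locally nilpotent ideals is locally nilpotent and taking the sum of all of them --- exactly the variant you relegate to your final parenthesis. Your main argument is genuinely different: instead of redoing the Jordan proof inside $J$, you import the Jordan radical $\bar R$ of $\bar J=J/[J,J]$ as a black box and pull it back, using the specific LJ-structure $[J,J]\cdot J=0$ (hence $[J,J]^2=0$) to see that any finitely generated subalgebra $B$ of the preimage satisfies $B^{(k+1)}\subseteq [J,J]^2=0$, after which Theorem~\ref{thm:Theorem4} upgrades solvability to nilpotency. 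Both routes lean on Theorem~\ref{thm:Theorem4} at the decisive step, but yours is shorter and more structural, at the price of invoking the existence of the locally nilpotent radical for Jordan algebras; the paper's intended route is self-contained modulo repeating the Zhevlakov-style argument. Two small points you use implicitly and could state: $[J,J]$ is an ideal because $(ab-ba)x=0$ by \eqref{eq:Op-raz} while $x(ab)-x(ba)\in[J,J]$, and the inclusion $B^{2^k}\subseteq B^m$ for $2^k\ge m$ rests on the standard descending chain $B^{n+1}\subseteq B^n$ (alternatively, deduce $B^{(k)}\subseteq[J,J]$ directly from solvability of the nilpotent algebra $\bar B$).
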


By means of Theorem \ref{thm:Theorem4},
the proof is completely similar to the one for Jordan algebras
(see, e.g., \cite[Section 4.5]{Zh}).

An element $a$ of a Jordan dialgebra $J$ is nilpotent
if there
exists $n\in \mathbb{N}$ such that $(a^n)=0$
for at least one bracketing.

Note that Jordan dialgebras are not power-associative in general.
For example, this is easy to compute that
$x(xx)\ne (xx)x$
in the the Jordan dialgebra $F^{(+)}$, where $F$ is the free associative dialgebra
generated by one variable~$x$ constructed in~ \cite{L3}.

However, the following statement holds.

\begin{cor}
If $J$ is a Jordan dialgebra and $a\in J$ is a nilpotent element
then there exists $N\in \mathbb N$ such that $(a^N)=0 $
for all bracketings.
\end{cor}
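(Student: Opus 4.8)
The plan is to deduce the statement from Theorem~\ref{thm:Theorem4} by passing to the subalgebra generated by $a$. Let $a$ be a nilpotent element of the LJ-algebra $J$, and let $A=\langle a\rangle$ be the subalgebra of $J$ generated by $a$. This is a singly generated, hence finitely generated, LJ-algebra, and it suffices to prove that $A$ is nilpotent: once $A^N=0$, every product of $N$ copies of $a$ with an arbitrary distribution of brackets lies in $A^N$ and therefore vanishes, which is exactly the required assertion about $(a^N)$.

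So the task reduces to checking that $A$ is solvable, after which Theorem~\ref{thm:Theorem4} applies. I would consider the subspace $A_0=\Span\{xy-yx\mid x,y\in A\}$. As for any $0$-dialgebra (see Section~\ref{sec3}), $A_0$ is an ideal of $A$ and $A/A_0$ is an ordinary algebra in the same variety, i.e.\ a Jordan algebra; moreover it is generated by the image $\bar a$ of $a$. Since $a$ is nilpotent, some bracketing of $a^n$ is zero in $J$, hence its image is zero in $A/A_0$; because a Jordan algebra is power-associative, all bracketings of $\bar a^{\,n}$ coincide, so $\bar a^{\,n}=0$. Thus $A/A_0$ is a Jordan algebra generated by a single nilpotent element, hence it is spanned by $\bar a,\bar a^2,\dots,\bar a^{\,n-1}$ and is nilpotent, in particular solvable.

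On the other hand, the identity $[x_1x_2]x_3=0$ gives $A_0A=0$, so $A_0^{(1)}=A_0A_0\subseteq A_0A=0$ and $A_0$ is solvable. An extension of a solvable algebra by a solvable one is solvable: solvability of $A/A_0$ gives $A^{(m)}\subseteq A_0$ for some $m$, and then $A^{(m+1)}=(A^{(m)})^2\subseteq A_0^{(1)}=0$. Therefore $A$ is a finitely generated solvable LJ-algebra, Theorem~\ref{thm:Theorem4} shows that $A$ is nilpotent, and the first paragraph completes the proof.

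The argument is almost entirely a chain of routine verifications; the only point that needs a little care is the reduction of solvability of $A$ to that of $A/A_0$ and $A_0$, which rests on the standard facts that passing to $A/A_0$ preserves the defining Jordan identities, that a singly generated Jordan nilalgebra is nilpotent, and that the identity $[x_1x_2]x_3=0$ kills $A_0A$. The genuine content is Theorem~\ref{thm:Theorem4}, already established, so I expect no real obstacle beyond these checks.
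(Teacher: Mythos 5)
Your argument is correct, and it reaches the conclusion by a somewhat different route than the paper. The paper's proof is a two-line application of the operator machinery: it passes to $A=\langle a\rangle$, observes that $\ell_J(A^n)=0$ (implicitly: $\bar A$ is a singly generated nil Jordan algebra, hence nilpotent, so $A^n$ lies in the span of commutators, which is annihilated by $\ell$ because of $\ell_{xy}=\ell_{yx}$), concludes that $\ell_J(A)$ is finitely generated, and then invokes Lemma~\ref{lem:ZSSS} directly to get nilpotency of $\ell_J(A)$ and hence $A^N=0$. You instead prove that $A$ is solvable --- via the ideal $A_0$ with $A_0A=0$ and the nilpotency of the singly generated Jordan quotient $A/A_0$ --- and then cite Theorem~\ref{thm:Theorem4} as a black box. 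Since the proof of Theorem~\ref{thm:Theorem4} itself rests on Lemma~\ref{lem:ZSSS}, the two arguments run on the same engine; yours is longer but makes explicit exactly the facts the paper leaves unstated (why $A^n\subseteq A_0$, why $A_0$ is a square-zero ideal, why solvability lifts through the extension), and the reduction of ``nilpotent element'' to ``nilpotent subalgebra'' via $A^N=0$ is handled cleanly in your first paragraph. Both proofs are valid; the paper's is the more economical for a reader who has just worked through the proof of Theorem~\ref{thm:Theorem4}, while yours is the more self-contained at the level of stated results.
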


\begin{proof}
Consider the subalgebra $A$ of $J$ generated by $a$. Then
$\ell_J(A)$ is finitely generated since $\ell_J(A^n)=0$.
By Lemma \ref{lem:ZSSS} $\ell_J(A)$ is nilpotent, hence,
$A^N=0$ for some $N\ge 1$.
\end{proof}

\begin{cor}
If a finite-dimensional
Jordan dialgebra is nil then it is nilpotent.
\end{cor}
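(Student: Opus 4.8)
The plan is to reduce the statement about an arbitrary finite-dimensional nil LJ-algebra to the already-established Theorem~\ref{thm:Theorem4}, which says that a finitely generated solvable LJ-algebra is nilpotent. Since any finite-dimensional algebra is automatically finitely generated, it suffices to show that a finite-dimensional nil LJ-algebra $J$ is solvable. By the equivalence of solvability and cubic solvability noted in Section~\ref{sec4}, and because $J^{[i]}$ are ideals, the natural route is to pass to the quotient $\bar J = J/J_0$, which is an ordinary Jordan algebra, and exploit the classical theory.

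First I would observe that $\bar J$ is a finite-dimensional nil Jordan algebra: if $a\in J$ is nil, then so is its image $\bar a$, since a vanishing bracketing of $a$ maps to the corresponding (unique, by commutativity) power of $\bar a$. By the classical theorem of Albert, a finite-dimensional nil Jordan algebra is nilpotent, hence solvable; so $\bar J^{(k)} = 0$ for some $k$, i.e.\ $J^{(k)} \subseteq J_0$. It then remains to control $J_0$ itself. Here I would use the description from Section~\ref{subsec3.1}: $J_0 = \Span\{ a\rbar b - a\lbar b \}$, which in the LJ-algebra language (where $ab = a\rbar b$, $a\lbar b = ba$) is $J_0 = \Span\{ ab - ba \} = [J,J]$, the span of commutators. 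The crucial structural fact is the first identity $[x_1x_2]x_3 = 0$: the commutator ideal $J_0$ annihilates $J$ under left multiplication, so $J_0 \cdot J = 0$, whence $J_0^{(1)} = J_0 J_0 = 0$ as well (every product with first factor in $J_0$ vanishes). Thus $J_0$ is already solvable — in fact $J_0^2 = 0$.

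Combining these: $J^{(k)} \subseteq J_0$ and $J_0^{(1)} = 0$ give $J^{(k+1)} = (J^{(k)})^2 \subseteq J_0^2 = 0$, so $J$ is solvable. Now $J$ is finite-dimensional, hence finitely generated, hence by Theorem~\ref{thm:Theorem4} nilpotent. The main obstacle — or rather the point requiring the most care — is confirming that $J_0$ behaves as cleanly as claimed: one must verify that $[J,J]\cdot J = 0$ follows directly from $[x_1x_2]x_3 = 0$ (immediate) and that therefore every product landing in $[J,J]$ on the left is zero, which uses only left-nilpotency-type bookkeeping and the identity $\ell_{ab} = \ell_{ba}$ from \eqref{eq:Op-raz}. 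One should also double-check the reduction of $\bar J$ being nil to a \emph{known} finite-dimensional Jordan result (Albert's theorem), which is the only external ingredient; everything else is internal to the paper's framework.
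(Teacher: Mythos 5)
Your argument is correct and follows essentially the same route as the paper's own proof: pass to the finite-dimensional nil Jordan algebra $\bar J$, invoke Albert's theorem to get $J^{(k)}\subseteq [J,J]$, use the identity $[x_1x_2]x_3=0$ to kill one more solvable step, and finish with Theorem~\ref{thm:Theorem4}. The extra care you take in checking that $\bar J$ is nil and that $[J,J]\cdot J=0$ forces $J^{(k+1)}=0$ just makes explicit what the paper leaves implicit.
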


\begin{proof}
Let $J$ be a finite-dimensional Jordan nil dialgebra. Then $\bar J$ is
a Jordan algebra, so it is nilpotent \cite{Alb}. So $J^k\subseteq [J,J]$,
i.e., $J^{(k+1)}=0$. By Theorem \ref{thm:Theorem4}, $J$ is
nilpotent.
\end{proof}

\begin{cor}
A Jordan nil dialgebra $J$ of bounded index over a field of
zero characteristic is solvable. If $J$ is finitely
generated then it is nilpotent.
\end{cor}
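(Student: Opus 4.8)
The plan is to reduce the nil-of-bounded-index case to the already-established results on solvability and local nilpotency, using the passage to the associated Jordan algebra $\bar J$ as the main bridge. First I would observe that if $J$ is nil of bounded index $n$ (i.e.\ every bracketing of $a^n$ vanishes for all $a\in J$), then the quotient $\bar J = J/J_0$ is a nil Jordan algebra of bounded index, since the operation on $\bar J$ is the symmetrized product inherited from $J$ and any power in $\bar J$ lifts to some bracketing in $J$. Over a field of characteristic zero, a classical theorem (Zhevlakov, see \cite{Zh}) states that a nil Jordan algebra of bounded index is solvable; hence $\bar J$ is solvable, which gives $\bar J^{(k)} = 0$ for some $k$, equivalently $J^{(k)} \subseteq J_0 = \Span\{a\rbar b - a\lbar b\}$, i.e.\ $J^{(k)} \subseteq [J,J]$ in the notation of the excerpt. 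Since $[J,J]$ is annihilated by all left multiplications by \eqref{eq:Op-raz}, and more precisely $J^{(k)}J = J^{(k+1)}$-type products collapse, a short computation analogous to the proof of Proposition~\ref{Thm:4_1} shows $J^{(k+1)} = 0$. Thus $J$ itself is solvable.

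For the second assertion, suppose in addition that $J$ is finitely generated, say by a finite set $X$. Then $J$ is a finitely generated solvable LJ-algebra, so Theorem~\ref{thm:Theorem4} applies directly and yields that $J$ is nilpotent. This is the quick route; alternatively one can argue as in the preceding corollary, using that $\bar J$ finitely generated nil of bounded index over a characteristic-zero field is nilpotent, hence $\bar J^m = 0$ for some $m$, so $J^m \subseteq [J,J]$, giving $J^{(m+1)} = 0$, and then invoking Theorem~\ref{thm:Theorem4} on the finitely generated solvable algebra $J$.

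The step I expect to require the most care is the transfer of ``nil of bounded index'' from $J$ to $\bar J$ and the verification that solvability of $\bar J$ really pulls back to solvability of $J$ with only a bounded increase in the solvable degree. The subtlety is that LJ-algebras are not power-associative, so ``$\bar J$ is nil of bounded index $n$'' must be read as: for every $\bar a\in\bar J$, the (well-defined, since $\bar J$ is Jordan hence power-associative) power $\bar a^{\,n}$ vanishes — and this follows because $\bar a^{\,n}$ is the image of the right-normed bracketing $a^{\langle n\rangle}$ (or any bracketing) of $a$ in $J$, which is zero by hypothesis. Once $\bar J$ is known solvable, the inclusion $J^{(k)}\subseteq J_0$ together with $\ell(J_0)=0$ forces the remaining solvable step to terminate, exactly as in the finite-dimensional corollary above; no new ideas beyond \eqref{eq:Op-raz} and Theorem~\ref{thm:Theorem4} are needed.
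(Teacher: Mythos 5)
Your proposal is correct and follows essentially the same route as the paper: pass to the Jordan algebra $\bar J$, which is nil of bounded index, invoke the solvability theorem for such Jordan algebras in characteristic zero (the paper cites Zel'manov \cite{Zel} rather than \cite{Zh}, but this is only a citation detail), conclude $J^{(k)}\subseteq [J,J]$ and hence $J^{(k+1)}=0$ since $[J,J]J=0$, and then apply Theorem~\ref{thm:Theorem4} in the finitely generated case. Your extra care about transferring ``nil of bounded index'' to $\bar J$ and about the one-step increase in solvable degree is sound and merely fills in details the paper leaves implicit.
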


\begin{proof}
Let $J$ be a Jordan nil dialgebra of bounded index.
Then $\bar J $ is Jordan and so it is solvable \cite{Zel}, i.e.,
$\bar J^{(k)}=0$. So for $J$ we have $J^{(k+1)}=0$.
The final statement follows from Theorem \ref{thm:Theorem4}.
\end{proof}

Let us state the Pierce decomposition for Jordan dialgebras with an idempotent.
There exists a correspondence between idempotents of a
Jordan dialgebra
$J$ and its ``algebraic image'' $\bar J$.

\begin{lem}\label{lem:4_8}
Let $\bar e $ be an idempotent in $\bar J$ for a Jordan dialgebra~ $J$,
$e\in J$, and let $h=e^2-e$. Then $f=e+2eh+h$ is an idempotent in~ $J$.
\end{lem}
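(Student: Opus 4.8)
The plan is to verify directly that the proposed element $f = e + 2e(eh) + eh$ satisfies $f^2 = f$ in $J$, using the LJ-identities in their operator form \eqref{eq:Op-raz}, \eqref{eq:Op-dva}, \eqref{eq:Op-tri} together with the hypothesis $e^2 = e + h$ and the fact that $\bar e$ is an idempotent in $\bar J$, which means $h \in J_0 = \Span\{a\rbar b - a\lbar b\}$, i.e. $\bar h = 0$. The first thing I would extract from $\bar h = 0$ is that $h$ annihilates on the left: $\ell_h = \ell_a(1) $ acting trivially, more precisely since the left action of $\bar J$ on $J$ factors through $\bar J$, we have $\ell_h = 0$ as an operator on $J$ whenever $\bar h = 0$ — wait, that is too strong; what is true is that $h$ lies in the ideal $A_0$, so $\bar h=0$ and hence $hx = \ell_h x$ depends only on $\bar h$, giving $\ell_h = 0$ on $J$. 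This is the key simplification: $h x = 0$ for all $x \in J$, in particular $he = 0$, $h(eh) = 0$, $hh = 0$.

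First I would use $\ell_h = 0$ to rewrite $e^2 = e+h$ at the operator level. Since $\ell_{e^2} = \ell_e \ell_e$? No — that needs associativity. Instead, apply the second LJ-identity \eqref{eq:JorDiasAlg}, $(e^2, x, y) = 2(e, x, ey)$, and the third, $e(e^2 x) = e^2(ex)$, with $e^2$ replaced by $e + h$ and all terms containing $\ell_h$ or right-multiplication by something in $A_0$ simplified. The aim is to derive clean relations such as $e(e x) = $ (something) and $\ell_{e^2} = \ell_{e+h} = \ell_e$ (using $\ell_h=0$ and \eqref{eq:Op-raz}), so that $e^2 x = ex$ as far as \emph{left} multiplication by $e^2$ is concerned, while $e^2 \ne e$ as an element. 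Then I would expand $f^2 = f \rbar f$. Writing $f = e + u$ with $u = 2e(eh) + eh \in $ the image of $\ell_e$, and noting $u = \ell_e(2eh + h) = \ell_e(2(eh) + h)$, I would compute $f^2 = e^2 + eu + ue + u^2 = (e+h) + eu + ue + u^2$ and show $eu + ue + u^2 = u - h$, i.e. that the correction terms exactly absorb the unwanted $h$.

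The computation of $eu + ue + u^2 - u + h = 0$ is the technical heart. Here I would systematically use: $h$ kills everything on the left; the linearized second identity to move products of the form $e(e(\cdot))$ around; and the third identity $e(e^2 x) = e^2(ex)$, i.e. $e((e+h)x) = (e+h)(ex)$, which since $\ell_h=0$ reads $e(ex) = (e+h)(ex) = e(ex) + h(ex) = e(ex)$ — trivial, so the real content must come from the \emph{linearized} third identity or from the second identity applied with clever substitutions. More carefully: I would linearize $x_1(x_1^2 x_2) = x_1^2(x_1 x_2)$ in $x_1$ around $x_1 = e$ to get an identity relating $\ell_e$, $\ell_{e^2}$, $\ell_{h}$ — wait, linearizing the square gives $x_1^2 \to e h' + h' e$ type terms when $x_1 \to e + h'$ — and also substitute $x_1 = e$, $x_2 = eh$ or $x_2 = h$ directly. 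Each monomial in the expansion of $f^2$ has the shape $e(e(\cdots))$ with at most three $e$'s and one $h$, or $(eh)(eh)$, etc.; I would reduce all of them to a common normal form (say, left-normed words $e(e(eh))$, $e(eh)$, $eh$, and scalars times $h$) and check the total vanishes.

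The step I expect to be the main obstacle is managing the non-power-associativity: products like $(e(eh))(e(eh))$ or $(eh)(e(eh))$ cannot be freely re-bracketed, so I will need to apply \eqref{eq:Op-dva} and \eqref{eq:Op-tri} repeatedly with the substitutions $y,z,t \in \{e, eh, e(eh), h\}$, and keep track of which reductions are legitimate. A secondary obstacle is confirming that $\ell_h = 0$ genuinely follows from $\bar e$ being an idempotent — this requires recalling from Section~\ref{sec3} that for a $0$-dialgebra the left action on $A$ factors through $\bar A$, so $\bar h = 0 \Rightarrow hx = 0$; once that is in hand, roughly half the terms die immediately and the remaining identity becomes a finite check inside the (Jordan-like) subalgebra generated by $e$ and $eh$, which I would organize as a short table of monomial reductions rather than prose.
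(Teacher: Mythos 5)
Your overall strategy --- reduce to $\ell_h=0$, expand $f^2=(e+h)+eu+ue+u^2$ with $u=2e(eh)+eh$, and verify $(1-\ell_e)u=h$ --- is the same direct verification the paper uses, and your bookkeeping is in one respect more careful than the paper's own proof, which writes $(e+2e(eh)+eh)^2=e+2e(e(eh))+e(eh)$, i.e.\ silently replaces $e\cdot e=e^2=e+h$ by $e$. Two smaller corrections: $ue=u^2=0$ holds not because $u$ lies in the image of $\ell_e$, but because $eh$ and $e(eh)$ lie in the ideal $[J,J]$ and $[J,J]J=0$; for the same reason all the mixed terms $(e(eh))(e(eh))$, $(eh)(e(eh))$, etc.\ that you planned to attack with \eqref{eq:Op-dva}, \eqref{eq:Op-tri} vanish on the spot, so no table of monomial reductions is needed.

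The genuine problem is that the deferred ``technical heart'' cannot be completed, because the identity you need is false. Substituting $x_1=x_2=x_3=x_4=e$ into \eqref{eq:dva} and using $e^2=e+h$ and $hJ=(eh)J=0$ gives $2e(eh)=eh$; substituting $x_1=x_2=x_3=e$, $x_4=h$ gives $2e(e(eh))=3e(eh)-eh$ (not the relation $2e(e(eh))=e(eh)+eh$ asserted in the paper, which carries a sign slip). Hence $u=2e(eh)+eh=2eh$ and $eu=2e(e(eh))+e(eh)=eh$, so $(1-\ell_e)u=eh$, whereas you need $(1-\ell_e)u=h$. Since $2e(eh)=eh$ says $\ell_e(\ell_e-\frac{1}{2})h=0$, i.e.\ $h$ has no Peirce-$1$ component for $\ell_e$, the equality $eh=h$ forces $h=0$: the element in the statement is an idempotent only when $e$ already is. (Concretely, in the paper's split-null example $J=A\oplus M$ with $A=M=\Bbbk\bar e_1\oplus\Bbbk\bar e_2$ and $e=\bar e_1+\bar e_2$, the second summand taken in $M$, one gets $eh=0$ and the formula returns $e$ itself, which is not idempotent.) The equation $(1-\ell_e)u=h$ does have a solution in $[J,J]$, namely $u=h+2eh$, because $1-\ell_e$ is invertible on $J_0\oplus J_{\frac{1}{2}}\ni h$; so the element that is actually idempotent is $e+h+2eh=e^2+2e(eh)+eh$. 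If you rerun your plan with this corrected $u$, the verification closes in two lines.
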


\begin{proof}
Since $[J,J]J=0$, we have
\[
 f^2= (e+2eh+h)^2 = e^2 + 2e(eh) + eh = e + (eh+2e(eh))+h.
\]
It follows from \eqref{eq:dva} that
$e(eh) = e(e(e^2 - e))=e(e(ee)) - e(ee)
= \frac{1}{2} (3e(ee) - e^2) -e(ee)
= \frac{1}{2}e(e+h) - \frac{1}{2}(e+h)
= \frac{1}{2}eh$.
Therefore,  $f^2=f$.
\end{proof}

Let $e$ be an idempotent in a Jordan dialgebra $J$.
Define
$U_{a,b}=\ell_a \ell_b +\ell_b \ell_a- \ell_{ab}\in \ell(J)$
and $U_a= U_{a,a}$.
Consider the operators
$$
U_{e}=2\ell _{e}^2- \ell_e, \quad
 U_{1-e}=2\ell _{e}^2-3\ell_e+ \id_J, \quad
U_{1-e,e}=2 \ell_{e}-2 \ell_{e}^2,
$$
so that $U_e+2U_{1-e,e}+U_{1-e}=\id_J$.
Let $J_1=U_e J$,
$J_{\frac{1}{2}}=U_{1-e,e}J$,
$J_0=U_{1-e}J$.

\begin{thm}
Let $J$ be a Jordan dialgebra with an
idempotent~ $e$.
Then
 $J_i=\{x\in J \mid ex =ix\}$, $i=0,\frac{1}{2},1$,
and
$J=J_1 \oplus J_{\frac{1}{2}} \oplus J_0$.
Multiplication table for Pierce
components is the following:
\[
\begin{gathered}
J_{1}^2\subseteq J_1, \quad
J_0 J_1 + J_1 J_0= 0,
\quad J_{0}^2\subseteq J_0,\\
J_0 J_{\frac{1}{2}} +  J_{\frac{1}{2}} J_0 \subseteq J_{\frac{1}{2}},
\quad
J_1 J_{\frac{1}{2}} + J_{\frac{1}{2}} J_1 \subseteq J_{\frac{1}{2}}, \quad
J_{\frac{1}{2}}^2\subseteq J_0+J_1.
\end{gathered}
\]
\end{thm}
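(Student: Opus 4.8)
The plan is to read off the entire statement from the action of the single operator $\ell_e\in\End J$ of left multiplication by the idempotent $e$ (recall $e^2=e$).

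\textbf{Spectrum of $\ell_e$.} Substituting $t=z=y=e$ into \eqref{eq:Op-dva} and using $e^2=e$ gives $2\ell_e^3+\ell_e=3\ell_e^2$, i.e. $\ell_e(2\ell_e-\id_J)(\ell_e-\id_J)=0$. Since $\charact\Bbbk\ne 2,3$, the roots $0,\tfrac12,1$ are pairwise distinct, so $J$ is the direct sum of the eigenspaces $\Ker(\ell_e-i)=\{x\in J\mid ex=ix\}$, $i=0,\tfrac12,1$. The corresponding spectral projections are the Lagrange interpolation polynomials evaluated at $\ell_e$, and a direct computation identifies them with $U_e$ (onto the $1$-eigenspace), $2U_{1-e,e}$ (onto the $\tfrac12$-eigenspace) and $U_{1-e}$ (onto the $0$-eigenspace). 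This proves $J=J_1\oplus J_{\frac12}\oplus J_0$ and the coincidence of $J_i=U\cdot J$ with $\{x\in J\mid ex=ix\}$.

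\textbf{Multiplication table.} Putting $(t,z,y)=(a,e,e)$ and then $(t,z,y)=(e,a,e)$ in \eqref{eq:Op-dva}, and using \eqref{eq:Op-raz} together with the relations $\ell_{ea}=\ell_{ae}=i\ell_a$ and $\ell_{(ea)e}=i^2\ell_a$ valid for $a\in J_i$, I obtain the operator identities
\begin{gather*}
\ell_a\ell_e^2+\ell_e^2\ell_a+i^2\ell_a=2i\,\ell_a\ell_e+\ell_e\ell_a,\\
2\ell_e\ell_a\ell_e+i\,\ell_a=2i\,\ell_a\ell_e+\ell_e\ell_a .
\end{gather*}
Now fix $a\in J_i$, $b\in J_j$, write $ab=c_0+c_{1/2}+c_1$ with $c_k$ in the $k$-eigenspace, and note $\ell_e(ab)=\tfrac12 c_{1/2}+c_1$, $\ell_e^2(ab)=\tfrac14 c_{1/2}+c_1$. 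Applying the two identities to $b$ (so $\ell_e b=jb$) collapses them to $(i-j)^2\,ab=\tfrac14\,c_{1/2}$ and $(2j-1)\,\ell_e(ab)=i(2j-1)\,ab$; comparing Peirce components then gives, case by case, $ab=0$ when $\{i,j\}=\{0,1\}$, $ab\in J_{\frac12}$ when $|i-j|=\tfrac12$, and $ab\in J_i$ when $i=j\in\{0,1\}$, while for $i=j=\tfrac12$ only $c_{1/2}=0$ survives, that is $ab\in J_0\oplus J_1$. These are exactly the claimed inclusions.

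\textbf{The symmetry relations.} The one place where the genuine non-commutativity of an LJ-algebra must be faced is the equalities $J_0J_{\frac12}=J_{\frac12}J_0$ and $J_1J_{\frac12}=J_{\frac12}J_1$: there $\ell_{ab}=\ell_{ba}$ does \emph{not} yield $ab=ba$, since $[J,J]$ need not act faithfully. I would handle this either by one more substitution in \eqref{eq:Op-tri}, or, more transparently, through Theorem~\ref{thm:CurrEmbedd}: the embedding $\psi\colon J\hookrightarrow(\Curr\hat J)^{(0)}$ realizes the LJ-product $ab$ as the Jordan-bimodule action of $\bar a$ on $b$, so $\bar e$ becomes an idempotent of the Jordan algebra $\hat J=\bar J\oplus J$, the eigenspaces $J_i$ are $J\cap\hat J_i$ for the classical Peirce decomposition of $\hat J$, and all the Peirce relations for $J$ — the symmetric ones included — descend from the ordinary Jordan case. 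I expect this symmetry bookkeeping to be the main obstacle; the spectral computation of Steps~1 and~2 is routine eigenvalue arithmetic.
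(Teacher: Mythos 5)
Your argument is correct and follows essentially the same route as the paper's: extract eigenvalue relations for $\ell_e$ acting on products $ab$ from the defining identities and finish by comparing Peirce components (the paper draws one relation from \eqref{eq:tri} and the other two from \eqref{eq:dva}, whereas you get everything from \eqref{eq:Op-dva}; you also make explicit the cubic relation $\ell_e(2\ell_e-\id_J)(\ell_e-\id_J)=0$ behind the directness of the sum, which the paper leaves implicit). Your closing worry about the equalities $J_0J_{\frac12}=J_{\frac12}J_0$ and $J_1J_{\frac12}=J_{\frac12}J_1$ is moot: the paper, too, only establishes the two inclusions into $J_{\frac12}$, which your case analysis already gives for both orders of the factors, and note that your fallback via $\Curr\hat J$ would not yield set equality anyway, since $ab=\bar a\cdot b$ and $ba=\bar b\cdot a$ are products of different pairs of elements of the commutative algebra $\hat J$.
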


\begin{proof}
The equalities $J_i=\{x\in J \mid ex=ix\}$
follow from $2e(e(ex))=3e(ex) - ex$, which is a corollary of~\eqref{eq:dva}.

Relation \eqref{eq:tri} implies
$$
e^2(xy)=e(xy)=-2(ex)(ey)+x(ey)+2e((ex)y),
$$
where $x \in J_i, y \in J_k$. It gives that
$$
(2i-1)e(xy)=k(2i-1)xy.
$$
Similarly, from \eqref{eq:dva}
we obtain $(2k-1)e(xy)=i(2k-1)xy$.
As a corollary we can state
$J_1 J_k$, $J_k J_1$, $J_0 J_k$, and $J_k J_0$
are embedded into~ $J_k$. Also,
$J_0 J_1 \subseteq J_0 \cap J_1=(0)\supseteq J_1 J_0$.

It remains to prove that $J_{\frac{1}{2}}^2\subseteq J_1+J_0$,
i.e., that $U_{1-e,e}J_{\frac{1}{2}}^2 = 0$.
Indeed, \eqref{eq:dva} implies
\[
((xe)e)y+x(e(ey))+e(e(xy))=2(ex)(ey)+e(xy),
\]
so $e(e(xy))-e(xy)=0$ for all $x,y \in J_{\frac{1}{2}}$.
\end{proof}

\section{Structure Leibniz algebra}\label{sec5}

\subsection{Di-endomorphisms}
Let us fix an embedding of a Jordan dialgebra $J$
into a Jordan conformal
algebra~$C$. By $H$ we denote the polynomial algebra $\Bbbk [T]$
which has the canonical Hopf algebra structure.

The space $\Cend C$
is an associative dialgebra with respect to the following
operations:
\begin{equation} \label{eq:CendDialg}
(\varphi \rbar \psi )_n = \varphi_0\psi_n,\quad
(\varphi \lbar \psi )_n = \varphi_n\psi_0,
\end{equation}
where
$\varphi, \psi \in \Cend C$, $n\ge 0$.
Indeed, this is straightforward to check that
$\varphi \rbar \psi $
and
$\varphi \lbar \psi $ are conformal linear maps
(i.e., relations \eqref{eq:CendAx} hold), and the operations \eqref{eq:CendDialg}
satisfy \eqref{eq:0Ident}, \eqref{eq:AssDiasAx}.

Let us  also denote by $\rbar $ and $\lbar $
the following two $\Bbbk $-linear maps:
\[
\begin{gathered}[]
\rbar ,\lbar : \Cend C\otimes C \to C, \\
\varphi \rbar a = \varphi_0(a), \quad
\varphi \lbar a = \sum\limits_{n\ge 0} T^n\varphi_n(a), \\
\varphi\in \Cend C, \quad a\in C.
\end{gathered}
\]

\begin{lem}\label{lem:CendRel1}
For all $\varphi, \psi \in \Cend C$, $a\in C$ we have
\[
\begin{gathered}[]
 (\varphi\rbar \psi )\rbar a= (\varphi\lbar \psi )\rbar a= \varphi \rbar (\psi \rbar a), \\
 (\varphi\rbar \psi )\lbar a= \varphi \rbar (\psi \lbar a), \\
 (\varphi\lbar \psi )\lbar a = \varphi \lbar (\psi \lbar a) =
 \varphi \lbar (\psi \rbar a).
\end{gathered}
\]
\end{lem}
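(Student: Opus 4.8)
The plan is to verify each of the five claimed identities by unwinding the definitions of the operations $\rbar$, $\lbar$ on $\Cend C$ given in \eqref{eq:CendDialg} and the two mixed maps $\rbar, \lbar: \Cend C \otimes C \to C$, and then reading off components. All the relations are poly-linear, so it suffices to compute the relevant $0$th components (for the $\rbar a$ relations) or all the $n$th components after applying $T^n$ and summing (for the $\lbar a$ relations); no locality or finiteness assumption on $C$ is needed since everything reduces to finitely many $\Bbbk$-linear maps $\varphi_n$.

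First I would record the basic formulas: $(\varphi \rbar \psi)_n = \varphi_0 \psi_n$, $(\varphi \lbar \psi)_n = \varphi_n \psi_0$, $\varphi \rbar a = \varphi_0(a)$, and $\varphi \lbar a = \sum_{n\ge 0} T^n \varphi_n(a)$. Then the first line of the lemma is immediate: $(\varphi \rbar \psi)\rbar a = (\varphi \rbar \psi)_0(a) = \varphi_0\psi_0(a)$, and likewise $(\varphi \lbar \psi)\rbar a = (\varphi\lbar\psi)_0(a) = \varphi_0\psi_0(a)$, while $\varphi \rbar (\psi \rbar a) = \varphi_0(\psi_0(a)) = \varphi_0\psi_0(a)$; all three agree. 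For the second line, $(\varphi \rbar \psi)\lbar a = \sum_n T^n (\varphi\rbar\psi)_n(a) = \sum_n T^n \varphi_0\psi_n(a)$, whereas $\varphi \rbar (\psi \lbar a) = \varphi_0\bigl(\sum_n T^n \psi_n(a)\bigr)$; these are equal because $\varphi_0$ need not commute with $T$ but the expression is still $\sum_n \varphi_0(T^n \psi_n(a))$ — so I should be careful and write $\varphi \rbar (\psi \lbar a) = \varphi_0(\psi \lbar a) = \varphi_0\bigl(\sum_n T^n\psi_n(a)\bigr) = \sum_n \varphi_0(T^n\psi_n(a))$, which by $\Bbbk$-linearity of $\varphi_0$ matches $\sum_n T^n\varphi_0\psi_n(a)$ only if $\varphi_0(T^n y) = T^n\varphi_0(y)$. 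That is precisely \emph{not} true in general, so this is the step to watch.

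The resolution of that apparent obstacle — and the main technical point of the proof — is that the equality must be read after applying $\varphi_0$ correctly: one uses the commutation rule \eqref{eq:CendTM_Comm}, $\varphi_n(T^m x) = \sum_{s\ge 0}(-1)^s\binom{m}{s}T^{m-s}\varphi_{n-s}(x)$, specialized to $n=0$, which (since $\varphi_{-1}\equiv 0$ kills all $s\ge 1$ terms) gives $\varphi_0(T^m x) = T^m\varphi_0(x)$. Thus $\varphi_0$ \emph{does} commute with multiplication by $T$, and the second line follows. For the third line, $(\varphi\lbar\psi)\lbar a = \sum_n T^n(\varphi\lbar\psi)_n(a) = \sum_n T^n\varphi_n\psi_0(a)$, and $\varphi\lbar(\psi\lbar a) = \sum_m T^m\varphi_m(\psi\lbar a) = \sum_m T^m\varphi_m\bigl(\sum_k T^k\psi_k(a)\bigr)$; expanding $\varphi_m(T^k\psi_k(a))$ via \eqref{eq:CendTM_Comm} and re-indexing the resulting double sum should collapse it to $\sum_n T^n\varphi_n\psi_0(a)$ — this is the one genuine computation, a binomial-identity rearrangement, and I expect it to be the real content. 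Finally $\varphi\lbar(\psi\rbar a) = \varphi\lbar(\psi_0(a)) = \sum_m T^m\varphi_m\psi_0(a)$, which visibly equals $(\varphi\lbar\psi)\lbar a$, closing the chain. I would present the $n=0$ case of \eqref{eq:CendTM_Comm} explicitly as the key lemma-within-the-proof, then do the third-line double-sum manipulation in detail and leave the first two lines as short direct checks.
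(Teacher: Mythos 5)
Your proposal is correct and follows essentially the same route as the paper: direct component computations for the first two lines, and the double-sum expansion via \eqref{eq:CendTM_Comm} with the re-indexing $t=n-s$ and the collapse $\sum_{s}(-1)^s\binom{m}{s}=0$ for $m\ge 1$ in the third. Your explicit observation that $\varphi_0(T^m x)=T^m\varphi_0(x)$ (the $n=0$ case of \eqref{eq:CendTM_Comm}) is a point the paper leaves implicit in its ``follows from the definitions'' step, and it is a correct and worthwhile clarification.
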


\begin{proof}
It follows from the definitions, that
\[
(\varphi\rbar \psi )\rbar a= (\varphi\lbar \psi )\rbar a= \varphi \rbar (\psi \rbar a)
=\varphi_0\psi_0(a),
\]
\[
(\varphi\rbar \psi )\lbar a= \varphi \rbar (\psi \lbar a)=
\sum\limits_{n\ge 0} T^n(\varphi_0\psi_n(a)).
\]
Let us check the last relation:
\[
(\varphi\lbar \psi )\lbar a=
\sum\limits_{n\ge 0} T^n((\varphi\lbar \psi )_n(a))
=
\sum\limits_{n\ge 0} T^n(\varphi_n\psi _0(a))=\varphi \lbar (\psi \rbar a).
\]
On the other hand,
\[
\varphi \lbar (\psi \lbar a)
=\sum\limits_{n\ge 0}T^n\varphi_n(\psi\lbar a)
= \sum\limits_{n\ge 0}T^n\varphi_n\left(\sum\limits_{m\ge 0} T^m\psi_m(a) \right ).
\]
It follows from \eqref{eq:CendTM_Comm} that
\[
\varphi \lbar (\psi \lbar a) =
\sum\limits_{n,m,s\ge 0}(-1)^s \binom{m}{s} T^{n+m-s}
 \varphi_{n-s}\psi_m(a).
\]
Assuming $t=n-s\ge 0$, we obtain
\begin{multline}\nonumber
 \varphi \lbar (\psi \lbar a) =
\sum\limits_{t,m,s\ge 0}(-1)^s T^{t+m}\binom{m}{s}
 \varphi_{t}\psi_m(a) \\
 = \sum\limits_{t,m\ge 0}
 \left(\sum\limits_{s\ge 0}(-1)^s\binom{m}{s}\right)
T^{t+m} \varphi_{t}\psi_m(a)
= \sum\limits_{t \ge 0} T^{t} \varphi_{t}\psi_0(a).
\end{multline}
This proves the last equality.
\end{proof}

\begin{rem}
In the case when $C$ is finitely generated over $H$ (e.g., if $\dim J<\infty$) then
the last lemma can also be derived from the associativity of the pseudo-algebra
$\Cend C$. If $C$ is not a finite $H$-module then $\Cend C$
is not a pseudo-algebra,
so we have to prove the statement explicitly.
\end{rem}

\begin{lem}\label{lem:CendRel2}
Suppose $C$ is a pseudo-algebra, $\varphi \in \Cend C$, $x,y\in C$.
Then the following relations hold in the dialgebra $C^{(0)}$:
\[
(\varphi \lbar x)\rbar y  = (\varphi \rbar x)\rbar y ,\quad
x\lbar (\varphi \rbar y)  = x\lbar (\varphi \lbar y).
\]
\end{lem}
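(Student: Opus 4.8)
The two claimed identities in $C^{(0)}$ are
\[
(\varphi \lbar x)\rbar y = (\varphi \rbar x)\rbar y, \qquad
x\lbar (\varphi \rbar y) = x\lbar (\varphi \lbar y),
\]
where by definition $u\rbar v = u\oo{0} v$ and $u\lbar v = \{u\oo{0} v\}$ in $C^{(0)}$. The natural approach is to expand everything in terms of the coefficient operations $\oo{n}$ and $\{\cdot\oo{n}\cdot\}$ on $C$, using that $\varphi\rbar x = \varphi_0(x)$ and $\varphi\lbar x = \sum_{n\ge 0} T^n\varphi_n(x)$. For the first identity, the right-hand side is $(\varphi_0 x)\oo{0} y$, while the left-hand side is $\bigl(\sum_n T^n\varphi_n(x)\bigr)\oo{0} y$. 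By the first relation in \eqref{eq:ConfAx}, $Ta\oo{n} b = a\oo{n-1}b$ for $n\ge 1$ and $Ta\oo{0} b = 0$; hence only the $n=0$ term $\varphi_0(x)\oo{0} y$ survives in the expansion of $\bigl(\sum_n T^n\varphi_n(x)\bigr)\oo{0} y$. That gives the first identity directly. The second identity is the mirror image: the left side is $x\lbar (\varphi_0 y) = \{x\oo{0}\varphi_0(y)\}$ and the right side is $x\lbar\bigl(\sum_n T^n\varphi_n(y)\bigr) = \bigl\{x\oo{0}\sum_n T^n\varphi_n(y)\bigr\}$; now \eqref{eq:ConfAxRight} gives $\{a\oo{n} Tb\} = \{a\oo{n-1}b\}$ for $n\ge 1$ and $\{a\oo{0} Tb\}=0$, so applying it to $n=0$ kills every term with $T^n$, $n\ge 1$, leaving only $\{x\oo{0}\varphi_0(y)\}$.

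So the whole argument is: write out the four expressions using the formulas for $\rbar,\lbar$ between $\Cend C$ and $C$; recognize $u\rbar v$ as $u\oo{0} v$ and $u\lbar v$ as $\{u\oo{0} v\}$; and then invoke, respectively, the left axiom $Ta\oo{0}b=0$ from \eqref{eq:ConfAx} and the right axiom $\{a\oo{0}Tb\}=0$ from \eqref{eq:ConfAxRight} to collapse the sum $\sum_{n\ge 0}T^n(\cdots)$ to its degree-zero term. Linearity in each argument lets one distribute $\oo{0}$ (resp. $\{\cdot\oo{0}\cdot\}$) over the (locally finite) sum term by term before applying the axiom.

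I do not expect a genuine obstacle here — this is a short bookkeeping verification. The only point requiring a word of care is that the pseudo-product $*$, and hence the coefficient operations $\oo{n}$, are $\Bbbk$-linear but only $H^{\otimes 2}$-linear (not $H$-bilinear in the naive sense), so one must use precisely the correct axiom on the correct slot: for $u\rbar v=u\oo{0}v$ one needs the behaviour under $T$ applied to the \emph{left} argument (given by \eqref{eq:ConfAx}), while for $u\lbar v=\{u\oo{0}v\}$ one needs the behaviour under $T$ applied to the \emph{right} argument (given by \eqref{eq:ConfAxRight}); swapping these would fail. Once the bracketed-operation axioms are lined up with the side on which the $T^n$-tail sits, both identities fall out immediately.
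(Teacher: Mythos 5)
Your proof is correct and follows essentially the same route as the paper: expand $\varphi\lbar(\cdot)$ as $\sum_{n\ge 0}T^n\varphi_n(\cdot)$ and kill all terms with $n\ge 1$ using $Ta\oo{0}b=0$ for the first identity and $\{a\oo{0}Tb\}=0$ (i.e.\ $x\lbar Tz=0$) for the second. The paper only writes out the second relation explicitly, but the mechanism is identical, including your (correct) care about which $H$-linearity axiom applies to which slot.
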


\begin{proof}
Let us check the second relation. By definition,
\[
\varphi\lbar y =\sum\limits_{n\ge 0} T^n\varphi_n(y),
\]
but since $x\lbar Tz = 0$, we have
$x\lbar (\varphi \lbar y)= x\lbar\varphi_0(y)
 = x\lbar (\varphi \rbar y)$.
\end{proof}

Consider operators $L_a\in \Cend C$ of left pseudo-multiplication on
$a\in J\subseteq C$. Namely,
\[
 L_a: x\mapsto a*x\in (H\otimes H)\otimes _H C, \quad x\in C.
\]
In particular, if $x\in J$ then $(L_a)_0: x\mapsto L_a\rbar x = ax\in J$.
Since $C$ is a commutative pseudo-algebra,
\[
L_a(x) = a*x = (\sigma_{12}\otimes_H\id_C)(x*a),
\]
so
\[
L_a\lbar x = L_x\rbar a = xa\in J, \quad x\in J.
\]

\begin{defn}\label{defn:Diend}
Given an embedding of $J$ into a pseudo-algebra $C$,
define the space of {\em di-endomorphisms\/} of $J$ as
\[
\Diend_C J = \{\varphi \in\Cend C \mid
\varphi \rbar J, \varphi \lbar J\subseteq J \}/
  \{\varphi \in \Cend C\mid \varphi \rbar J = \varphi \lbar J =0 \}.
\]
\end{defn}

We will denote $\Diend_C J$ by  $\Diend J$ when the embedding of $J$
into $C$ is fixed.

For example, the images of
operators of left pseudo-multiplication
$L_a\in \Cend C$, $a\in J$, are di-endomorphisms of $J$; we will
 also denote them by~ $L_a$.
Let $L(J)$ be the linear subspace in $\Diend J$
spanned by all operators $L_a$, $a\in J$. If $A$ is a subspace of $J$,
then $L_J(A)=L(A)$ stands for the subspace $\{L_a\mid a\in A\}\subseteq L(J)$.

It is clear that the operations
$\rbar $, $\lbar $ are correctly defined on
$\Diend J\otimes \Diend J \to \Diend J$
and on
$\Diend J\otimes J \to J$.
Then all the properties proved in Lemmas \ref{lem:CendRel1},~\ref{lem:CendRel2}
above hold for $\Diend J$ instead of $\Cend C$.
In particular, $\Diend J $ is an associative dialgebra, and
$(\Diend J)^{(-)}$ is a Leibniz algebra.

For the Leibniz bracket $[L_a,L_b]= L_a\rbar L_b - L_b\lbar L_a$
in $(\Diend J)^{(-)}$, $a,b\in J$, we have
\begin{equation}\label{eq:LBrack_on_L}
[L_a,L_b]\rbar x = a(bx)-b(ax),
\quad
[L_a,L_b]\lbar x = a(xb)-(xa)b = -(a,x,b),
\quad
x\in J,
\end{equation}
by Lemma~\ref{lem:CendRel1}.

\begin{lem}\label{lem:CommRelations}
The following relation holds in $(\Diend J)^{(-)}$:
\[
[L_{ab}, L_c] = [L_b, L_{ac}] + [L_a, L_{bc}],\quad a,b,c \in J.
\]
\end{lem}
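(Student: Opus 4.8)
The plan is to work inside $\Diend J$ (equivalently, inside $\Cend C$ acting on $J\subseteq C$) and translate the desired commutator identity back into the multiplication identity \eqref{eq:tri}, which we already know holds in $J$. Recall that in $(\Diend J)^{(-)}$ the bracket is the di-commutator $[\varphi,\psi]=\varphi\rbar\psi-\psi\lbar\varphi$, so $[\varphi,\psi]_n=\varphi_0\psi_n-\psi_n\varphi_0$ by \eqref{eq:CendDialg}. In particular the identity to be proved is an identity of conformal operators, and since a di-endomorphism is the zero class iff it kills $J$ under both $\rbar$ and $\lbar$, it suffices to check that $[L_{ab},L_c]-[L_b,L_{ac}]-[L_a,L_{bc}]$ annihilates an arbitrary $x\in J$ under $\rbar$ (i.e. its $0$-th component vanishes on $J$) and under $\lbar$ (i.e. $\sum_n T^n(\cdot)_n(x)=0$).

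First I would compute the $\rbar$-action. For any $a\in J$ and $x\in J$ we have $L_a\rbar x=(L_a)_0(x)=ax$, and $(L_a)_0$ is exactly the left multiplication operator $\ell_a$ on $J$. Hence $[L_{ab},L_c]\rbar x=(L_{ab})_0(L_c)_0(x)-(L_c)_0(L_{ab})_0(x)=[\ell_{ab},\ell_c](x)$, using associativity-type relations from Lemma \ref{lem:CendRel1} to collapse the nested $\rbar$. So the $\rbar$-component of the claimed identity is precisely
\[
[\ell_{ab},\ell_c]-[\ell_b,\ell_{ac}]-[\ell_a,\ell_{bc}]=0,
\]
which is identity \eqref{eq:Op-tri}, a restatement of \eqref{eq:tri}. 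This disposes of the $\rbar$-part immediately.

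Next I would handle the $\lbar$-action, which is the genuinely conformal part and the main obstacle. Here I need the full sequence $\{(L_a)_n\}_{n\ge 0}$, not just the degree-zero piece; these are governed by \eqref{eq:CendAx} and, crucially, by the pseudo-algebra (conformal Jordan) axioms satisfied by $C$, which constrain how $(L_a)_n$ interacts with $(L_b)_m$. The strategy is to expand $[L_{ab},L_c]\lbar x=\sum_n T^n\big((L_{ab})_0(L_c)_n-(L_c)_n(L_{ab})_0\big)(x)$ and similarly for the other two terms, then use the commutativity of $C$ together with the relation $L_a\lbar x=L_x\rbar a$ on $J$ (noted just before Definition \ref{defn:Diend}) to re-express everything in terms of ordinary multiplication operators on $J$. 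The key computational input will be the conformal associativity/Jordan identity written out in components — effectively a ``higher'' version of \eqref{eq:Op-tri} and \eqref{eq:Op-dva} — applied termwise in $T^n$. I expect that after collecting the $T^n$-coefficients, each coefficient reduces to a consequence of \eqref{eq:dva}–\eqref{eq:tri} (equivalently \eqref{eq:Op-dva}, \eqref{eq:Op-tri}) and the locality/sesquilinearity relations \eqref{eq:ConfAx}, so the whole sum vanishes.

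Finally I would assemble the two parts: since the operator $D:=[L_{ab},L_c]-[L_b,L_{ac}]-[L_a,L_{bc}]\in\Cend C$ satisfies $D\rbar x=0$ and $D\lbar x=0$ for all $x\in J$, its class in $\Diend J$ is zero, which is exactly the assertion of the lemma. The delicate point to get right is bookkeeping in the $\lbar$-computation: the index shifts in \eqref{eq:CendAx} and \eqref{eq:CendTM_Comm} mean that the degree-$n$ component of a product mixes several components of the factors, so one must be careful that the Jordan identity is being invoked at the correct ``level'' for each power of $T$. An alternative, cleaner route — if $C$ is taken to be the current conformal algebra $\Curr\hat J$ from Theorem \ref{thm:CurrEmbedd}, as the paper later proposes — is that there $(L_a)_n=0$ for $n\ge 1$ on the image of $J$ up to lower-order $T$-corrections, which would make the $\lbar$-part nearly as short as the $\rbar$-part; but since the lemma is stated for an arbitrary embedding, I would give the general argument as above and remark that independence of the embedding justifies the reduction.
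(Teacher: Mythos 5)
Your reduction of the lemma to showing that $D=[L_{ab},L_c]-[L_b,L_{ac}]-[L_a,L_{bc}]$ satisfies $D\rbar x=D\lbar x=0$ for all $x\in J$ is exactly the paper's strategy, and your $\rbar$-computation (collapsing to $[\ell_{ab},\ell_c]-[\ell_b,\ell_{ac}]-[\ell_a,\ell_{bc}]=0$, i.e.\ \eqref{eq:Op-tri}/\eqref{eq:tri}) matches the paper. The gap is in the $\lbar$ half: you do not carry out the computation, you only announce a plan, and the plan you announce --- expanding $\sum_n T^n(\cdot)_n(x)$ and invoking a ``higher'' component-wise version of the conformal Jordan identity, with delicate index bookkeeping --- both overstates what is needed and leaves the actual vanishing as an expectation (``I expect that after collecting the $T^n$-coefficients, each coefficient reduces to\dots''). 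No termwise analysis in $T^n$ is required, and no conformal Jordan axiom beyond commutativity of $C$ enters. Lemma \ref{lem:CendRel1}, which you already use for the $\rbar$ part, was proved for arbitrary conformal endomorphisms precisely so that $(\varphi\rbar\psi)\lbar a=\varphi\rbar(\psi\lbar a)$ and $(\varphi\lbar\psi)\lbar a=\varphi\lbar(\psi\lbar a)=\varphi\lbar(\psi\rbar a)$; combined with $L_a\rbar x=ax$ and $L_a\lbar x=L_x\rbar a=xa$ (from commutativity of the pseudo-product), every term of $D\lbar d$ collapses in one step to an ordinary product in $J$:
\[
D\lbar d=(ab)(dc)-(d(ab))c-b(d(ac))+(db)(ac)-a(d(bc))+(ad)(bc),
\]
which vanishes by the single application of identity \eqref{eq:dva}. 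Until you either perform this collapse or actually verify your termwise claim, the $\lbar$ half of the lemma is unproved.

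A secondary issue: your closing alternative, reducing to $C=\Curr\hat J$ ``by independence of the embedding,'' is not available at this point in the paper --- independence is Proposition \ref{prop:Independence}, proved later and resting on the structure algebra whose construction uses this very lemma. The general argument via Lemma \ref{lem:CendRel1} is both necessary here and shorter than you anticipate.
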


\begin{proof}
Let $D = [L_{ab}, L_c] - [L_b, L_{ac}] - [L_a, L_{bc}] \in \Diend J$.
It is sufficient to check that
$D\rbar d = D\lbar d = 0 $ for all $d\in J$.
Indeed, it follows immediately from \eqref{eq:LBrack_on_L}
that
\[
D\rbar d
= (ab)(cd) - c((ab)d) - b((ac)d) + (ac)(bd) - a((bc)d) + (bc)(ad),
\]
which is zero due to \eqref{eq:tri}, and
\[
D\lbar d
= (ab)(dc) - (d(ab))c - b(d(ac)) + (db)(ac) - a(d(bc)) + (ad)(bc),
\]
which is zero due to \eqref{eq:dva}.
\end{proof}

\subsection{Di-derivations}

\begin{defn}\label{defn:QuasiDer}
A di-endomorphism $D \in \Diend J$ is called a
 {\em di-derivation\/} of $J$ if
\begin{equation}\label{eq:QDerDefn}
D\rbar (xy) = (D\rbar x) y + x (D\rbar y), \quad
D\lbar (xy) = y(D\lbar x) + x (D\lbar y)
\end{equation}
for all $x,y\in J$.
\end{defn}

Denote by $\QDerr(J)=\QDerr_C(J)\subseteq \Diend J$
the space of all di-derivations of $J$
with respect to the embedding $J\subseteq C^{(0)}$.

\begin{prop}\label{prop:QDerComm}
Let $a,b\in J$,
Then $D=[L_a,L_b]\in \Diend J$
is a di-derivation of~$J$.
\end{prop}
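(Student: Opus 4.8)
The plan is to reduce the claim to the already-established identities \eqref{eq:dva} and \eqref{eq:tri} by the same device used in Lemma~\ref{lem:CommRelations}: namely, to verify the two equalities in \eqref{eq:QDerDefn} by evaluating both sides via the operations $\rbar$ and $\lbar$ on an arbitrary element of $J$. Since $D = [L_a, L_b] = L_a\rbar L_b - L_b\lbar L_a$ lies in $\Diend J$, it suffices to compute $D\rbar(xy)$, $(D\rbar x)y$, $x(D\rbar y)$ and the three analogous $\lbar$-expressions, using $L_u\rbar v = uv$, $L_u\lbar v = vu$ (both valid for $u,v\in J$ by the discussion preceding Definition~\ref{defn:Diend}), together with the composition rules from Lemma~\ref{lem:CendRel1} for how $\rbar$ and $\lbar$ iterate.

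First I would record the basic reductions. From Lemma~\ref{lem:CendRel1} we have $(\varphi\rbar\psi)\rbar v = \varphi\rbar(\psi\rbar v)$ and $(\varphi\lbar\psi)\rbar v = \varphi\rbar(\psi\rbar v)$, so $D\rbar v = L_a\rbar(L_b\rbar v) - L_b\rbar(L_a\rbar v) = a(bv) - b(av)$. Hence $D\rbar(xy) = a(b(xy)) - b(a(xy))$, while $(D\rbar x)y + x(D\rbar y) = \big(a(bx) - b(ax)\big)y + x\big(a(by) - b(ay)\big)$. The required identity $D\rbar(xy) = (D\rbar x)y + x(D\rbar y)$ is therefore
\[
a(b(xy)) - b(a(xy)) = (a(bx))y - (b(ax))y + x(a(by)) - x(b(ay)).
\]
This is a consequence of the linearized Jordan identity: writing $\partial_{u,v}$ for the operator $z\mapsto u(vz) - v(uz)$, the content is exactly that $\partial_{a,b}$ is a derivation of the commutative product, which is the operator form of $J(x_1,x_2,x_3,x_4)=0$, i.e.\ \eqref{eq:dva}. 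So the first half follows by expanding \eqref{eq:dva} (equivalently the full linearization of the Jordan identity) with the appropriate substitution of $a,b$ for two of the slots.

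For the second equality I would use the $\lbar$-side of Lemma~\ref{lem:CendRel1}: $(\varphi\rbar\psi)\lbar v = \varphi\rbar(\psi\lbar v)$ and $(\varphi\lbar\psi)\lbar v = \varphi\lbar(\psi\rbar v)$. Thus $D\lbar v = L_a\rbar(L_b\lbar v) - L_b\lbar(L_a\rbar v) = a(vb) - (av)b$, using $L_b\lbar v = vb$ and $L_a\rbar v = av$. Consequently $D\lbar(xy) = a((xy)b) - (a(xy))b$, and $y(D\lbar x) + x(D\lbar y) = y\big(a(xb) - (ax)b\big) + x\big(a(yb) - (ay)b\big)$; the target identity becomes a multilinear relation among left multiplications which I expect to be precisely \eqref{eq:tri} (equivalently \eqref{eq:Op-tri}/\eqref{eq:Op-dva}) after substituting $a$ for one variable. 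The one genuine subtlety — the step I would flag as the main obstacle — is bookkeeping: $J$ is only an LJ-algebra, not commutative, so $ab = a\rbar b$ and $ba = a\lbar b$ are genuinely different, and one must be careful that every product written down has its factors in the order actually produced by $\rbar$ and $\lbar$ on $\Cend C$ (note in particular $L_{ab}\ne L_{ba}$ is \emph{false} by \eqref{eq:Op-raz}, but $L_a\rbar L_b$ and $L_b\lbar L_a$ do differ). Once the six expressions are written in normal form, matching them against \eqref{eq:dva} and \eqref{eq:tri} is a direct, if slightly tedious, verification; no new identity beyond the defining ones of an LJ-algebra is needed.
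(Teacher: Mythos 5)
Your reduction is exactly the paper's: you compute $D\rbar v = a(bv)-b(av)$ and $D\lbar v = a(vb)-(av)b=-(a,v,b)$ via Lemma~\ref{lem:CendRel1} (the paper writes $(va)b$ for $(av)b$, which is the same by $[x_1x_2]x_3=0$), and the first equality in \eqref{eq:QDerDefn} correctly comes down to the statement that $\ell_a\ell_b-\ell_b\ell_a$ is a derivation, which follows from subtracting two substitution instances of \eqref{eq:dva}. Up to that point the proposal matches the paper and is fine.

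The gap is in the second equality. The identity you need is $(a,xy,b)=x(a,y,b)+y(a,x,b)$, and your expectation that after expansion it is ``precisely \eqref{eq:tri} after substituting $a$ for one variable'' is not correct. Expanded, it reads
\[
(a(xy))b - a((xy)b) \;=\; x((ay)b) - x(a(yb)) + y((ax)b) - y(a(xb)),
\]
whose terms have the shapes $(u(vw))z$ and $u(v(wz))$, whereas every term of \eqref{eq:tri} has the shape $u((vw)z)$ or $(uv)(wz)$; since $J$ is not commutative, \eqref{eq:Op-raz} does not let you convert one shape into the other, so no single substitution into \eqref{eq:tri} produces this relation, and it is not a routine ``matching''. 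The paper has to work for it: partially linearize $x_1(x_1^2x_2)=x_1^2(x_1x_2)$ (one copy of $x_1$ replaced by $a$) to get $2(xa)(xb)+x^2(ab)=2x((xa)b)+a(x^2b)$, subtract $(x^2a)b=(ax^2)b$ from both sides to rewrite this as $2(xa)(xb)-(x^2,a,b)=2x((xa)b)-(a,x^2,b)$, invoke $(x_1^2,x_2,x_3)=2(x_1,x_2,x_1x_3)$ to conclude $(a,x^2,b)=2x(a,x,b)$, and only then linearize in $x$, using $[x_1x_2]x_3=0$ to identify $(a,xy,b)$ with $(a,yx,b)$ and $\mathrm{char}\,\Bbbk\ne 2$ to divide by $2$. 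This chain of partial linearizations of the non-multilinear defining identities -- or some equally explicit linear combination of instances of \eqref{eq:dva} and \eqref{eq:tri} -- is the actual content of the proposition, and it is the step your proposal leaves unproved.
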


\begin{proof}
It follows from \eqref{eq:LBrack_on_L}
that we have to deduce the following
identities to hold in Jordan dialgebras:
\[
  (b(a x))y + a(b(x y)) + x(b(a y))   =
  (a(b x))y + b(a(x y)) + x(a(b y)) ,
\]
\[
(a,xy,b) = x(a,y,b) + y(a,x,b).
\]
The first one means that the commutator of two
operators of left multiplication in Jordan dialgebras is a usual
derivation. This is easy to deduce from \eqref{eq:dva}.

The second identity can be obtained as follows.
Partial linearization of the third identity of \eqref{eq:JorDiasAlg}
leads to
\[
 2(xa)(xb) + x^2(ab) = 2x((xa)b) + a(x^2b).
\]
Subtract $(x^2a)b=(ax^2)b$ from the left and right parts to get
\[
 2(xa)(xb) - (x^2,a,b) = 2x((xa)b) - (a,x^2,b).
\]
Now use \eqref{eq:JorDiasAlg} to obtain
\[
(a,x^2,b)=2x(a,x,b).
\]
Linearization of this identity leads to the required relation
by means of the first identity of \eqref{eq:JorDiasAlg}.
\end{proof}

\begin{rem}
In the case when $C$ is a finite $H$-module,
the last proposition can also be derived from the fact that
$D=[L_a*L_b]$ is a pseudo-derivation of the Jordan pseudo-algebra~$C$
\cite{Kol09},
i.e., $D$ satisfies the condition
\[
 D(a*b) = D(a)*b + (\sigma_{12}\otimes_H \id_C)(a*D(b))\in H^{\otimes 3}\otimes_H C.
\]
\end{rem}

\begin{lem}\label{lem:QDerLeib}
The space $\QDerr(J)$ is a Leibniz subalgebra of $(\Diend J)^{(-)}$.
\end{lem}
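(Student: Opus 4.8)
The plan is to show that $\QDerr(J)$ is closed under the di-commutator $[\varphi,\psi] = \varphi\rbar\psi - \psi\lbar\varphi$, which by Lemma~\ref{lem:QDerLeib}'s ambient statement (that $(\Diend J)^{(-)}$ is a Leibniz algebra) will immediately give that $\QDerr(J)$ is a Leibniz subalgebra. So the only thing to verify is: if $D, E\in\QDerr(J)$, then $[D,E]\in\QDerr(J)$, i.e. $[D,E]$ satisfies both identities of \eqref{eq:QDerDefn}. It is worth noting first that the di-commutator lands in $\Diend J$ automatically since $\Diend J$ is an associative dialgebra, so the content is purely the two derivation-type identities.

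First I would unwind the action of the di-commutator on a product. Using Lemma~\ref{lem:CendRel1}, for $d\in J$ we have $[D,E]\rbar d = D\rbar(E\rbar d) - E\lbar(D\rbar d) = D\rbar(E\rbar d) - E\rbar(D\rbar d)$, where the last step uses the relation $(\varphi\lbar\psi)\rbar a = (\varphi\rbar\psi)\rbar a = \varphi\rbar(\psi\rbar a)$ from Lemma~\ref{lem:CendRel1} applied with $\varphi=E$, $\psi=D$. Thus the $\rbar$-action of $[D,E]$ is just the ordinary commutator of the operators $d\mapsto D\rbar d$ and $d\mapsto E\rbar d$ on $J$. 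Since each of these is an ordinary derivation of $J$ (that is exactly what the first identity of \eqref{eq:QDerDefn} says), and the commutator of two derivations is again a derivation, the first identity of \eqref{eq:QDerDefn} holds for $[D,E]$.

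For the $\lbar$-action, I would similarly compute $[D,E]\lbar d = D\rbar(E\lbar d) - E\lbar(D\lbar d)$; by the last line of Lemma~\ref{lem:CendRel1} this equals $D\rbar(E\lbar d) - E\lbar(D\rbar d)$. Write $R_D$ for the operator $d\mapsto D\lbar d$ and $\ell_D$ for $d\mapsto D\rbar d$. The second identity of \eqref{eq:QDerDefn} says $R_D(xy) = y\,R_D(x) + x\,R_D(y)$, i.e. $R_D$ is a derivation for the ``opposite-weighted'' action; combined with the fact that $\ell_D$ is an ordinary derivation, a direct substitution shows that $[D,E]\lbar(xy) = y([D,E]\lbar x) + x([D,E]\lbar y)$. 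Concretely: $D\rbar(E\lbar(xy)) - E\lbar(D\rbar(xy))$, expand $E\lbar(xy) = y(E\lbar x) + x(E\lbar y)$ and $D\rbar(xy) = (D\rbar x)y + x(D\rbar y)$ inside, then use that $\ell_D$ is a derivation and $R_E$ is an $R$-derivation to expand the outer applications; the eight resulting terms regroup into $y([D,E]\lbar x) + x([D,E]\lbar y)$ after cancellation.

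The main obstacle I expect is bookkeeping rather than anything conceptual: one must be careful that the Jordan-dialgebra product is noncommutative, so in the $\lbar$-computation the two factors $x$ and $y$ play genuinely asymmetric roles, and the ``derivation'' property for $R_D$ has the factors in the order $y\,R_D(x) + x\,R_D(y)$ (not $R_D(x)\,y + \dots$). One has to track which argument each operator sits on the left of. A secondary small point to check is that everything descends correctly to the quotient defining $\Diend J$ — but that is automatic because the whole computation only ever uses $\varphi\rbar d$ and $\varphi\lbar d$ for $d\in J$, which are precisely the data that survive in $\Diend J$. No additional LJ-identity beyond what is already encoded in $\QDerr(J)$ is needed, so unlike Proposition~\ref{prop:QDerComm} there is no appeal to \eqref{eq:dva} or \eqref{eq:tri} here.
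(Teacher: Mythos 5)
Your proof is correct and follows essentially the same route as the paper: reduce the $\rbar$-identity to the commutator of ordinary derivations via Lemma~\ref{lem:CendRel1}, and verify the $\lbar$-identity by an eight-term expansion using both parts of \eqref{eq:QDerDefn}. The only (harmless) difference is that you rewrite $E\lbar(D\lbar\,\cdot)$ as $E\lbar(D\rbar\,\cdot)$ first and expand the inner $D\rbar(xy)$ with the first identity of \eqref{eq:QDerDefn}, so your cross terms cancel identically, whereas the paper expands with the second identity and invokes the extra relation $(D\lbar a)b=(D\rbar a)b$ (Lemma~\ref{lem:CendRel2}) to cancel them.
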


\begin{proof}
It suffices to check that if $D_1, D_2\in
\QDerr (J) $ then $D=[D_1,D_2]= D_1 \rbar D_2 - D_2 \lbar D_1$
also belongs to $\QDerr (J)$.

It follows from Lemma \ref{lem:CendRel1}  that
$(D_1 \vdash D_2 - D_2 \dashv D_1)\vdash (xy)=(D_1 \vdash D_2 - D_2
\vdash D_1)\vdash (xy)$,
and it is well known that
commutator of two derivations of ordinary algebra is
again a derivation.
Also, we have
\begin{multline}\nonumber
D \dashv (xy)
= D_1\vdash(D_2\dashv (xy)) - D_2 \dashv (D_1\dashv (xy))  \\
= D_1\vdash x(D_2\dashv y) + D_1\vdash y(D_2\dashv x)
  - D_2 \dashv x(D_1\dashv y)- D_2 \dashv y(D_1\dashv x) \\
= x (D_1\vdash (D_2\dashv y)) + (D_1\vdash x)(D_2\dashv y)
  +
  y (D_1\vdash (D_2\dashv x)) + (D_1\vdash y)(D_2\dashv x)\\
  -
  x(D_2 \dashv (D_1\dashv y)) -(D_1\dashv y)(D_2\dashv x)
  -
  y(D_2 \dashv (D_1\dashv x)) -(D_1\dashv x)(D_2\dashv y) \\
= x (D\dashv y) + y (D\dashv x)
\end{multline}
since
$(D_i\lbar a)b = (D_i\rbar a)b$ by Lemma \ref{lem:CendRel2}.
\end{proof}

\subsection{Structure algebra}

Let
\[
\mathrm S(J)= L(J)\oplus \QDerr(J)
\]
be the formal direct sum of two subspaces of $\Diend J$.
Define the following operation
$[\cdot, \cdot ]_s$ on
$\mathrm S(J)$:
\begin{equation}\label{eq:StructOperation}
\begin{gathered}[]
[L_a,L_b]_s = [L_a,L_b]\in \QDerr (J),\quad
[D,L_a]_s= L_{D\rbar a}\in L(J), \\
[L_a, D]_s = -L_{D\lbar a}\in L(J), \quad [D_1,D_2]_s = [D_1,D_2]\in \QDerr(J)
\end{gathered}
\end{equation}
for $a,b\in J$, $D_1,D_2\in \QDerr (J)$.

\begin{thm}\label{thm:StructureLeibnizAlg}
The space $\mathrm S(J)$ is a Leibniz algebra with respect to the operation
\eqref{eq:StructOperation}.
\end{thm}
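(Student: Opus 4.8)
The plan is to verify the left Leibniz identity
\[
[x,[y,z]]_s = [[x,y]_s,z]_s + [y,[x,z]_s]_s
\]
for all $x,y,z\in \mathrm S(J)$ by checking it on the homogeneous components $L(J)$ and $\QDerr(J)$ separately, using the bilinearity of $[\cdot,\cdot]_s$. Since both summands are subspaces of the associative dialgebra $\Diend J$, and since $(\Diend J)^{(-)}$ is already known to be a Leibniz algebra, the main point is to show that the \emph{modified} bracket $[\cdot,\cdot]_s$ (which replaces the di-commutator by the expressions $\pm L_{D\rbar a}$, $\pm L_{D\lbar a}$ on the mixed components) still satisfies \eqref{eq:LeibIdent}. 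The key structural facts I would invoke are: $\QDerr(J)$ is a Leibniz subalgebra of $(\Diend J)^{(-)}$ (Lemma \ref{lem:QDerLeib}); $L(J)$ is closed under $[L_a,L_b]$ with values in $\QDerr(J)$ (Proposition \ref{prop:QDerComm}); the relation $[L_{ab},L_c]=[L_b,L_{ac}]+[L_a,L_{bc}]$ of Lemma \ref{lem:CommRelations}; and the basic dialgebra identities of Lemmas \ref{lem:CendRel1}, \ref{lem:CendRel2}.

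First I would dispose of the two easy cases. If $x,y,z\in\QDerr(J)$, the identity \eqref{eq:LeibIdent} is exactly the Leibniz identity in $\QDerr(J)$, already established. If $x,y,z\in L(J)$, say $x=L_a$, $y=L_b$, $z=L_c$, then the left side is $[L_a,[L_b,L_c]]_s$; since $[L_b,L_c]\in\QDerr(J)$, this equals $-L_{[L_b,L_c]\lbar a}$, and one computes $[L_b,L_c]\lbar a = b(ac)-(ab)c$ (from Proposition \ref{prop:QDerComm}'s formulas, using $L_b\lbar L_c\lbar a = b(ac)$ etc.). The right side is $[[L_a,L_b]_s,L_c]_s + [L_b,[L_a,L_c]_s]_s = L_{[L_a,L_b]\rbar c} - L_{[L_a,L_c]\lbar b}$, and after translating the $\rbar$ and $\lbar$ actions into LJ-algebra products one reduces the required equality to an identity among left multiplication operators that follows from \eqref{eq:Op-raz}, \eqref{eq:Op-dva}, \eqref{eq:Op-tri} — in fact from the relation in Lemma \ref{lem:CommRelations} together with $\ell_{ab}=\ell_{ba}$.

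Next come the genuinely mixed cases: one must take $x,y,z$ ranging over the two ``types'' in all remaining configurations — essentially $(D,L_a,L_b)$, $(L_a,D,L_b)$, $(L_a,L_b,D)$, $(D_1,D_2,L_a)$, $(D_1,L_a,D_2)$, $(L_a,D_1,D_2)$. In each case one expands both sides using \eqref{eq:StructOperation}, so that every term becomes either an element of $L(J)$ of the form $\pm L_{(\cdots)}$ or an element of $\QDerr(J)$; since $L(J)\cap\QDerr(J)=0$ in the formal direct sum, the $L$-part and the $\QDerr$-part can be matched independently. The $\QDerr$-components reduce to the defining property \eqref{eq:QDerDefn} of quasi-derivations — concretely, the fact that for $D\in\QDerr(J)$ the bracket $[D,L_a]$ behaves like $L_{Da}$, which is what encodes that quasi-derivations ``differentiate'' the multiplication. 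The $L$-components reduce, via the identification $L(J)\cong J$ (up to the kernel of $a\mapsto L_a$ modded out in $\Diend J$), to the two identities in \eqref{eq:QDerDefn} read as statements about $D\rbar(ab)$ and $D\lbar(ab)$, plus again Lemma \ref{lem:CommRelations}.

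The main obstacle I anticipate is bookkeeping in the mixed cases where the Leibniz identity is genuinely non-symmetric in its first argument: for instance $[L_a,[L_b,D]_s]_s$ versus $[[L_a,L_b]_s,D]_s + [L_b,[L_a,D]_s]_s$, where the first bracket involves $[L_b,D]_s=-L_{D\lbar b}$, so the left side becomes $[L_a,-L_{D\lbar b}]_s = -[L_a,L_{D\lbar b}]\in\QDerr(J)$, while on the right $[L_a,L_b]_s=[L_a,L_b]\in\QDerr(J)$ forces one to know how the \emph{Leibniz} bracket of $\QDerr(J)$ interacts with $L(J)$, i.e.\ $[[L_a,L_b],D]_s$. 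Making these consistent requires carefully using that $[L_a,L_b]$ acts on $J$ (via $\rbar$) as the ordinary derivation $x\mapsto a(bx)-b(ax)$, so that $[[L_a,L_b],D]_s = L_{[L_a,L_b]\rbar D}$ should be interpreted through the action of this derivation — and here the asymmetry between $\rbar$ and $\lbar$, handled by Lemma \ref{lem:CendRel1}'s equalities $(D_i\lbar a)b=(D_i\rbar a)b$, is exactly what rescues the computation. I would organize the proof as a short lemma reducing everything to \eqref{eq:QDerDefn} and Lemma \ref{lem:CommRelations}, then present two or three representative mixed cases in full and state that the rest are analogous.
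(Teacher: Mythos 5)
Your proposal is correct and follows essentially the same route as the paper: the paper also reduces the case-by-case verification to the identities $[L_a,L_c]\lbar b-[L_b,L_c]\lbar a=[L_a,L_b]\rbar c$ (which, as you note, comes down to $(ab)c=(ba)c$) and $L_{D\rbar a}=[D,L_a]$, $L_{D\lbar a}=-[L_a,D]$ in $(\Diend J)^{(-)}$, the latter being exactly your observation that the mixed brackets coincide with genuine di-commutators via the quasi-derivation property \eqref{eq:QDerDefn}. The only cosmetic difference is that your appeal to Lemma \ref{lem:CommRelations} is not actually needed here (it is used later for $\mathrm T(J)$), since once the mixed brackets are identified with di-commutators the remaining cases follow from the Leibniz identity in $(\Diend J)^{(-)}$.
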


\begin{proof}
It is enough to make sure that the Leibniz identity
$[x,[y,z]_s]_s = [y,[x,z]_s]_s + [[x,y]_s,z]_s$ holds
for all $x,y,z \in \mathrm S(J)$.
This can be done in a straightforward way by making use
of the following equalities:
\begin{equation}\label{eq:StructureRelations}
\begin{gathered}[]
[L_a,L_c]\lbar b - [L_b,L_c]\lbar a = [L_a,L_b]\rbar c, \\
L_{D\rbar a} = [D, L_a], \quad L_{D\lbar a} = -[L_a, D],
\end{gathered}
\end{equation}
where $a,b,c\in J$, $D\in \QDerr (J)$, and
the commutators are computed in $(\Diend J)^{(-)}$.

Indeed,
$[L_a,L_c]\lbar b - [L_b,L_c]\lbar a=
-(ab)c + a(bc) + (ba)c - b(ac)=
a(bc) - b(ac) = [L_a,L_b]\rbar c$
by \eqref{eq:LBrack_on_L}.

Further, $[D, L_a]\rbar x =
D\rbar (ax) - (L_a\lbar D)\rbar x=
(D\rbar a)x + a(D\rbar x) - L_a\rbar (D\rbar x) = (D\rbar a ) x
= L_{D\rbar a}\rbar x$, $x\in J$.
Also, for all $x\in J$ we have
$[D, L_a]\lbar x = D\rbar (L_a\lbar x) - L_a\lbar (D\lbar x)
= D\rbar (xa) - (D\lbar x)a =  (D\rbar x)a + x(D\rbar a)  - (D\lbar x)a
=L_{D\rbar a}\lbar x$
by Lemma \ref{lem:CendRel2}.
The last equality can be proved
in a similar way.
\end{proof}

Let $\mathrm S_0(J)$ stands for the subspace of $\mathrm S(J)$ spanned
by $L_{ab}\in L(J^2)$ and $[L_a,L_b]\in \QDerr(J)$ for all $a,b\in J$.
Note that this subspace is closed under the Leibniz bracket
$[\cdot, \cdot ]_s$.
Therefore, $\mathrm S_0(J)$ is a Leibniz subalgebra
of $\mathrm S(J)$, which is called the {\em structure algebra\/} of~$J$.

Note that the construction of $\mathrm S_0(J)$ is based on the
embedding $J\subseteq C^{(0)}$ which has been fixed in the
very beginning. Let us show that the structure algebra
$\mathrm S_0(J)$
does not actually depend on the choice of~$C$.

\begin{lem}\label{lem:Independence1}
Let $C_1$ and $C_2$ be Jordan conformal algebras such that
$J\subseteq C_i^{(0)}$, $i=1,2$. Assume there exists
an epimorphism of conformal algebras $\tau : C_1\to C_2$
such that $\tau $ acts on $J$ as the identity map.
Denote by $\mathrm S_0^i(J)$  the structure Leibniz
algebra based on the embedding of $J$ into $C_i^{(0)}$, $i=1,2$.
Then $\mathrm S_0^{1}(J) \simeq \mathrm S_0^{2}(J)$.
\end{lem}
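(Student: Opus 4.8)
The plan is to show that the epimorphism $\tau\colon C_1\to C_2$ induces a well-defined map on di-endomorphisms that restricts to an isomorphism of the structure algebras. The key observation is that both $\mathrm S_0^i(J)$ are built out of the operators $L_a\in\Cend C_i$ (for $a\in J$) and their di-commutators, via the $\rbar$, $\lbar$ actions of $\Cend C_i$ on $J\subseteq C_i$, and that these actions are governed entirely by the multiplication in $J$ together with the conformal structure. So I would first observe that $L_a\in\Cend C_1$ and $L_a\in\Cend C_2$ are $\tau$-related: since $\tau$ is a homomorphism of conformal algebras with $\tau|_J=\id$, one has $\tau((L_a^{C_1})_n(x)) = (L_a^{C_2})_n(\tau(x))$ for all $x\in C_1$, $n\ge 0$. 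Consequently any $\varphi\in\Cend C_1$ lying in the associative subalgebra generated (under $\rbar,\lbar$) by the $L_a$ descends: $\tau\circ\varphi_n = \psi_n\circ\tau$ for a unique $\psi=:\bar\tau(\varphi)\in\Cend C_2$ (uniqueness because $\tau$ is surjective).

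\textbf{Construction of the map.} I would define $\bar\tau\colon \mathrm S_0^1(J)\to \mathrm S_0^2(J)$ by sending the image in $\Diend_{C_1}J$ of $L_{ab}$ and of $[L_a,L_b]$ to the corresponding elements of $\Diend_{C_2}J$. First check this is well defined: if $\varphi\in\Cend C_1$ satisfies $\varphi\rbar J=\varphi\lbar J=0$, then its $\tau$-pushforward $\psi$ satisfies $\psi\rbar J=\psi\lbar J=0$ as well, because $\psi\rbar a = \psi_0(a)$ and $\psi_0\circ\tau = \tau\circ\varphi_0$, and $a=\tau(a)$ for $a\in J$, so $\psi\rbar a = \tau(\varphi\rbar a)=0$, and similarly $\psi\lbar a = \sum_n T^n\psi_n(a) = \sum_n T^n\tau(\varphi_n(a)) = \tau(\varphi\lbar a)=0$ (using that $\tau$ is $H$-linear). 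Hence $\bar\tau$ is a well-defined linear map, and it visibly commutes with the defining operations in \eqref{eq:StructOperation}, since those operations ($[L_a,L_b]_s = [L_a,L_b]$, $[D,L_a]_s = L_{D\rbar a}$, etc.) are expressed through $\rbar$, $\lbar$ on $\Diend J$ and on $J$, all of which are intertwined by $\tau$. So $\bar\tau$ is a homomorphism of Leibniz algebras.

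\textbf{Bijectivity.} Surjectivity is immediate: the generators $L_{ab}$, $[L_a,L_b]$ of $\mathrm S_0^2(J)$ are the images under $\bar\tau$ of the corresponding generators of $\mathrm S_0^1(J)$. For injectivity, suppose $\varphi\in\Cend C_1$ represents an element of $\mathrm S_0^1(J)$ with $\bar\tau(\varphi)=0$ in $\Diend_{C_2}J$, i.e. $\bar\tau(\varphi)\rbar J = \bar\tau(\varphi)\lbar J = 0$. By the intertwining relations above, $\bar\tau(\varphi)\rbar a = \tau(\varphi\rbar a) = \varphi\rbar a$ for $a\in J$ (since $\varphi\rbar a\in J$ and $\tau|_J=\id$), and likewise $\bar\tau(\varphi)\lbar a = \varphi\lbar a$. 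Hence $\varphi\rbar J = \varphi\lbar J = 0$, so $\varphi$ already represents $0$ in $\Diend_{C_1}J$. Thus $\bar\tau$ is injective, and therefore an isomorphism $\mathrm S_0^1(J)\simeq\mathrm S_0^2(J)$.

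\textbf{Main obstacle.} The one point requiring care is the very existence and uniqueness of the pushforward $\bar\tau(\varphi)\in\Cend C_2$ for $\varphi$ in the relevant subalgebra: one must verify that the $\Bbbk$-linear maps $\tilde\psi_n\colon\tau(x)\mapsto\tau(\varphi_n(x))$ are well defined on $C_2$ (i.e. independent of the choice of preimage $x$), which holds precisely because $\Ker\tau$ is an ideal stable under all $\varphi_n$ — and that stability is where one genuinely uses that $\varphi$ lies in the subalgebra generated by the $L_a$'s with $a\in J$ together with the fact that $\tau$ is a conformal-algebra epimorphism fixing $J$. Once this pushforward is in hand, everything else is a routine verification that $\bar\tau$ respects \eqref{eq:StructOperation} and \eqref{eq:StructureRelations}.
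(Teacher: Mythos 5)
Your proposal is correct and follows essentially the same route as the paper: both reduce the claim to showing that the associative dialgebras generated by the operators $L_a$, $a\in J$, inside $\Diend_{C_i}J$ are isomorphic, via the intertwining relation $\tau\circ\varphi_n=\psi_n\circ\tau$ and a comparison of the kernels of the actions on $J$. The only cosmetic difference is that the paper constructs the homomorphism between these dialgebras by factoring through the free associative dialgebra $Fd\langle J\rangle$ and Loday's normal form, whereas you build the pushforward directly from the stability of $\Ker\tau$ under the components $\varphi_n$; both devices serve the same purpose.
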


\begin{proof}
Recall that $\mathrm S_0^i (J)$ is a Leibniz subalgebra
of $\mathrm S^i(J) = L^i(J)\oplus \QDerr^i (J)$,
where $L^i(J)$ is spanned by the di-endomorphisms
$L^i_a\in \Diend_{C_i} J$, $a\in J$,
$\QDerr^i(J) = \QDerr_{C_i} (J)$.

The desired isomorphism is supposed to be defined as
$L^1_{ab} \mapsto L^2_{ab}$, $[L^1_a, L^1_b] \mapsto [L^2_a, L^2_b]$,
$a,b\in J$. To prove that this map is in fact a well-defined isomorphism
of Leibniz algebras, it is enough to show that the
associative dialgebras $A_i$, $i=1,2$, generated in $ \Diend_{C_i} J $
by the operators $L^i_a$, $a\in J$, are isomorphic.

Denote by $B_i$ the associative dialgebra generated by
all $L^i_a \in \Cend C_i$, $a\in J$, $i=1,2$. Then
$A_i = B_i/\{b\in B_i\mid b\rbar J=b\lbar J=0 \}$.
Let $\pi_i$ stands for the natural map $B_i\to A_i$.

Let
$F= Fd\langle J \rangle $ be the free associative dialgebra generated by
$J$ as by a set of generators,
$\tau_i : F \to B_i\subseteq (\Cend C_i)^{(0)}$,
$i=1,2$,
be the epimorphisms of dialgebras
given by $a \mapsto L^i_a \in \Cend C_i$, $a\in J$.

It was shown in \cite{L3} that every element of $F$ can be
uniquely presented as a linear combination of monomials of the form
$\dot u_j =x_1\dots \dot x_j \dots x_n$,
$n\ge 1$, $1\le j\le n$, $u=x_1\dots x_n$ is a word in $J$,
where the dot over $x_j$ means that the dialgebra operations
in $\dot u_j$ are arranged in such a way that horizontal dashes are
directed at $x_j$.
Note that if $u=x_1\dots x_n$ then
\[
\tau_i(\dot u_j) = L^i_{x_1}\rbar \dots \rbar  L^i_{x_j}
 \lbar \dots \lbar L^i_{x_n}.
\]
Therefore,
\[
 \tau(\tau_1(f)_{m} x) =  \tau_2(f)_{m} \tau(x)
\]
for all $f\in F$, $m\ge 0$, $x\in C_1$. Since
$\tau $ is surjective, $\Ker \tau_1 \subseteq \Ker
\tau_2$, and there exists an epimorphism
$\psi : B_1\to B_2$
such that $\psi(L^1_a) = L^2_a$, $a\in J$.
In particular,
\[
\psi(b)_{m} \tau(x) = \tau(b_{m} x), \quad b\in B_1, \ m\ge 0,\ x\in C_1.
\]

Consider the composition $\psi\circ \pi_2: B_1\to A_2$.
Note that $b\rbar J, b\lbar J\subseteq J$ for every $b\in B_1$.
Therefore,
\[
\psi(b)\rbar x  = b\rbar x \in J, \quad
\psi(b)\lbar x  = b\lbar x \in J
\]
for every $x\in J$, and
$\Ker (\pi_2\circ \psi)=\Ker \pi_1$.
Hence, $A_1\simeq A_2$.
\end{proof}

\begin{prop}\label{prop:Independence}
The structure algebra $\mathrm S_0(J)$ does not
depend on the choice of embedding $J\subseteq C^{(0)}$.
\end{prop}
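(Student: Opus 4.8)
The plan is to deduce Proposition \ref{prop:Independence} from Lemma \ref{lem:Independence1} by showing that any two Jordan conformal algebras into which $J$ embeds can be connected by a common ``resolution'', namely the current conformal algebra built in Theorem \ref{thm:CurrEmbedd}. More precisely, recall that $J$ is a Jordan dialgebra, hence an $\mathfrak M$ dialgebra for $\mathfrak M$ the variety of Jordan algebras, so by Theorem \ref{thm:CurrEmbedd} it embeds into $\Curr \hat J$ where $\hat J = \bar J \oplus J$ is a Jordan algebra. Write $C_0 = \Curr \hat J$. The claim is that for \emph{any} Jordan conformal algebra $C$ with $J \subseteq C^{(0)}$, there is an epimorphism of conformal algebras $\tau: C_0' \to C$ acting as the identity on $J$, where $C_0'$ is the conformal subalgebra of $C_0$ generated by $J$; then by Lemma \ref{lem:Independence1} (applied with $C_1 = C_0'$, $C_2 = C$) we get $\mathrm S_0^{C_0'}(J) \simeq \mathrm S_0^C(J)$, and since $C_0'$ is one fixed choice, all the $\mathrm S_0^C(J)$ are mutually isomorphic.

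The key step is therefore the construction of $\tau$. Here I would invoke the universal property of the current conformal algebra: $\Curr \hat J = H \otimes \hat J$ is free as an $H$-module on $\hat J$, and the embedding $\psi: J \to \Curr\hat J$ of Theorem \ref{thm:CurrEmbedd} sends $a \mapsto 1\otimes \bar a + T\otimes a$. Given $C$ with $J \subseteq C^{(0)}$, I want an $H$-module map $\Curr \hat J \to C$ compatible with pseudo-products that restricts to the identity on $\psi(J)$. The natural candidate on generators is $1\otimes \bar a \mapsto $ (the $\bar A$-action avatar) and $T\otimes a \mapsto Ta$; combined, $\psi(a) = 1\otimes\bar a + T\otimes a \mapsto a \in C$. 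One must check this extends $H$-linearly and respects the pseudo-product — equivalently, respects all the $n$-products $\oo n$ — which follows because in $C^{(0)}$ the dialgebra operations $a\rbar b = a\oo 0 b$, $a\lbar b = \{a\oo 0 b\}$ recover exactly the dialgebra structure of $J$, and because the conformal axioms \eqref{eq:ConfAx} force the higher products of elements of the form $1\otimes\bar a + T\otimes a$ to be determined by the $0$-products via the $T$-action. Restricting to the subalgebra $C_0'$ generated by $J$ makes $\tau$ surjective onto the conformal subalgebra of $C$ generated by $J$, and since $\mathrm S_0(J)$ only involves the $L_a$ with $a\in J$ — i.e. only the conformal subalgebra generated by $J$ — we may harmlessly replace $C$ by that subalgebra.

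I expect the main obstacle to be verifying carefully that $\tau$ is well-defined as a homomorphism of conformal algebras, not merely of the associated dialgebras. The subtlety is that a conformal algebra carries infinitely many products $\oo n$, whereas the embedding $J \subseteq C^{(0)}$ only constrains $\oo 0$; one needs that the conformal algebra generated by $J$ inside $C$ has \emph{all} its structure determined by the dialgebra structure of $J$ plus the $H$-module axioms. This is exactly what the current construction encodes, so the argument is: $C_0' = \Curr$-type object is ``freest'' among conformal algebras generated by $J$ with the prescribed $0$-products, hence maps onto every other such $C$. I would make this precise by the explicit formula for $\tau$ on the $H$-spanning set $\{T^k \psi(u) : u \text{ a word}\}$ and checking \eqref{eq:ConfAx}, \eqref{eq:ConfAxRight} are preserved — a routine but slightly lengthy verification of the kind already carried out in the proof of Theorem \ref{thm:CurrEmbedd} and Lemma \ref{lem:Independence1}. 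Once $\tau$ is in hand, Lemma \ref{lem:Independence1} finishes the proof immediately.
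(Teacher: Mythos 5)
Your reduction to Lemma \ref{lem:Independence1} is the right instinct, but the key step --- the existence of a conformal epimorphism $\tau$ from (a subalgebra of) $\Curr\hat J$ onto the conformal subalgebra of an arbitrary $C$ generated by $J$, acting as the identity on $J$ --- is false, and this is not a routine verification that can be filled in. A homomorphism of conformal algebras must preserve \emph{all} the products $\oo{n}$, $n\ge 0$, whereas the hypothesis $J\subseteq C^{(0)}$ only constrains $a\oo{0}b$ and $\{a\oo{0}b\}$. Inside $\Curr\hat J$ the images $\psi(a)=1\otimes\bar a+T\otimes a$ have rigidly prescribed higher products: a direct computation with \eqref{eq:ConfAx} and \eqref{eq:CurrN-Prod} gives $\psi(a)\oo{1}\psi(b)=1\otimes(a\lbar b-a\rbar b)$ and $\psi(a)\oo{n}\psi(b)=0$ for all $n\ge 2$ (the term $(T\otimes a)\oo{n}(T\otimes b)$ lands in $1\otimes JJ=0$ because $\hat J$ is a split \emph{null} extension). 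In a general Jordan conformal algebra $C$ containing $J$, the elements $a\oo{n}b$ for $n\ge 1$ are subject only to the weak constraint that a certain $T$-weighted sum of them recovers $a\lbar b-a\rbar b$; in particular $a\oo{2}b$ can be nonzero. Any conformal homomorphism out of the subalgebra generated by $\psi(J)$ would force $a\oo{2}b=0$ in its image, so $\tau$ cannot exist for such $C$; the same obstruction blocks a map in the opposite direction. So $\Curr\hat J$ is not ``freest'' (nor terminal) among conformal envelopes of $J$, and your heuristic that the conformal axioms determine the higher products from the $0$-products is only valid inside the current algebra itself.

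The paper avoids this entirely: given two embeddings $J\subseteq C_i^{(0)}$, it forms the direct product $C=C_1\times C_2$ (again a Jordan conformal algebra), embeds $J$ diagonally via $a\mapsto(a,a)$, and observes that the canonical projections $\pi_i:C\to C_i$ are conformal epimorphisms restricting to the identity on $J$. Lemma \ref{lem:Independence1} applied to each $\pi_i$ gives $\mathrm S_0^i(J)\simeq\mathrm S_0^C(J)$, hence $\mathrm S_0^1(J)\simeq\mathrm S_0^2(J)$. This sidesteps any universality claim: projections from a product are epimorphisms for free, no matter how the higher products in $C_1$ and $C_2$ differ. If you want to keep the current algebra in the picture, you may of course compare any given $C$ with $\Curr\hat J$ by this same product trick, but the comparison must go through $C\times\Curr\hat J$, not through a direct map between them.
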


\begin{proof}
Suppose $C_1$ and $C_2$ are two Jordan conformal algebras
such that $J\subseteq C_i^{(0)}$, $i=1,2$.
Consider the Cartesian product $C=C_1\times C_2$, which is
also a Jordan conformal algebra, and $J\subseteq C^{(0)}$
in the obvious way: $a\mapsto (a,a)\in C$.

The canonical projections $\pi_i: C\to C_i$ are epimorphisms
of conformal algebras such that $\pi_i\vert _J =\id$.
By Lemma \ref{lem:Independence1}, $\mathrm S_0^i(J)\simeq \mathrm S_0(J)$,
where $\mathrm S_0(J)$ stands for the structure Leibniz algebra based
on the embedding $J\subseteq C^{(0)}$. Therefore,
$\mathrm S^1_0(J) \simeq \mathrm S_0^2(J)$.
\end{proof}

\section{The Tits---Kantor---Koecher construction}\label{sec6}
\subsection{Super-structure Leibniz algebra}

Suppose $J$ is a Jordan dialgebra as above, and let
$\mathrm S_0(J)$ be its structure Leibniz algebra built in the
previous section.

By the definition, $\mathrm S_0(J)\subseteq \Diend_C J\oplus \Diend_C J$
for some Jordan conformal algebra $C$ such that $J\subseteq C^{(0)}$.
Neither the structure of $\mathrm S_0(J)$, nor the operations
\[
\begin{gathered}
 \rbar , \lbar : \mathrm S_0(J)\otimes J \to J, \\
 (L_x + [L_y,L_z])\rbar a = xa + [L_y,L_z]\rbar a =
 xa + y(za) - z(ya), \\
 (L_x + [L_y,L_z])\lbar a = ax + [L_y,L_z]\lbar a =
 ax + y(az) - (ay)z,
\end{gathered}
\]
depend on the choice of $C$. It follows from relations
\eqref{eq:StructOperation},
\eqref{eq:StructureRelations},
and Lemma~\ref{lem:CendRel1} that
\begin{equation} \label{eq:StructureCommutator}
 \begin{gathered}[]
 [U,V]_s \rbar a  = U\rbar (V\rbar a) - V\rbar (U\rbar a) , \\
 [U,V]_s \lbar a  = U\rbar (V\lbar a) - V\lbar (U\lbar a)  \\
 \end{gathered}
\end{equation}
for all $U, V\in \mathrm S_0(J)$, $a\in J$.

Consider the formal direct sum
\[
\mathrm T(J) = J^+ \oplus \mathrm S_0(J) \oplus J^-,
\]
where $J^{\pm }$ are isomorphic copies of the space $J$.
The image of an element $a\in J$ in $J^\pm$ we will denote by $a^\pm $.

Define a bilinear operation $[\cdot , \cdot ]_t$ on
$\mathrm T(J)$ as follows:
\begin{equation}\label{eq:TKKoperations}
\begin{gathered}[]
[a^+, b^+]_t = [a^-,b^-]_t =0, \quad [D,U]_t = [D,U]_s, \\
[a^+, b^-]_t = -L_{ab} + [L_a,L_b], \quad
[a^-, b^+]_t = L_{ab} + [L_a,L_b],  \\
[a^-, D]_t = - (D\lbar a)^-, \quad
[a^+, D]_t = - (D^*\lbar a)^+, \\
[D, a^-]_t = (D\rbar a)^-, \quad
[D, a^+]_t = (D^*\rbar a)^+,
\end{gathered}
\end{equation}
where $a,b \in J$, $D, U\in \mathrm S_0(J)$,
and if $D=L_x + [L_y,L_z]$ then $D^*$ stands for $-L_x+ [L_y,L_z]$.

It is useful to note that the map
\[
a^+ + D + b^- \mapsto b^+ + D^* + a^-, \quad a,b\in J,\ D\in \mathrm S_0(J),
\]
is an automorphism of $\mathrm T(J)$.

\begin{thm}
The space $\mathrm T(J)$ equipped with the operation
\eqref{eq:TKKoperations} is a $\mathbb Z_3$-graded Leibniz
algebra.
\end{thm}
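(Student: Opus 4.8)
The plan is to verify the Leibniz identity
\[
[x,[y,z]_t]_t = [y,[x,z]_t]_t + [[x,y]_t,z]_t
\]
for all homogeneous $x,y,z\in\mathrm T(J)$, and to check that $[\cdot,\cdot]_t$ respects the $\mathbb Z_3$-grading with $J^+$ in degree $1$, $\mathrm S_0(J)$ in degree $0$, and $J^-$ in degree $-1$ (reading $\mathbb Z_3=\{-1,0,1\}$ additively). The grading check is immediate from inspecting \eqref{eq:TKKoperations}: each defining product sends a pair of components of degrees $i,j$ into the component of degree $i+j\pmod 3$ (e.g. $[a^+,b^-]_t$ lands in $\mathrm S_0(J)$, degree $0=1+(-1)$; $[D,a^+]_t$ lands in $J^+$, degree $1=0+1$; $[a^+,b^+]_t=0$ is consistent with degree $2\equiv -1$ being empty of a prescribed nonzero value). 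So the real content is the Leibniz identity.

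First I would reduce the number of cases. Since $[\cdot,\cdot]_t$ restricted to $\mathrm S_0(J)\otimes\mathrm S_0(J)$ is just $[\cdot,\cdot]_s$ and $\mathrm S_0(J)$ is already a Leibniz algebra by Theorem \ref{thm:StructureLeibnizAlg}, the all-$\mathrm S_0$ case is done. The $\ast$-automorphism noted just before the statement lets me cut the remaining casework roughly in half: applying it to a verified instance of the Leibniz identity yields another verified instance with $J^+\leftrightarrow J^-$ swapped and $D\mapsto D^*$. What remains is to run through the placements of the three arguments among $\{J^+,J^-,\mathrm S_0(J)\}$. The key organizing observation is \eqref{eq:StructureCommutator}: it says precisely that the map $\mathrm S_0(J)\to\operatorname{End}_\Bbbk J$, $U\mapsto(a\mapsto U\rbar a)$, is a representation of the Leibniz algebra $\mathrm S_0(J)$ on $J$ (via $\rbar$), and likewise $U\mapsto(a\mapsto U\lbar a)$ is an anti-type action, both compatible with the action appearing in \eqref{eq:TKKoperations} on $J^-$ and $J^+$ respectively. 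Thus all cases in which at least one argument lies in $\mathrm S_0(J)$ and the other two are in $J^\pm$ reduce to \eqref{eq:StructureCommutator} together with the module-like identities $L_{U\rbar a}=[U,L_a]$ and $L_{U\lbar a}=-[L_a,U]$ that are built into \eqref{eq:StructureRelations} and extend by linearity to $U\in\mathrm S_0(J)$.

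The genuinely new computations are the cases with all three arguments in $J^{+}\cup J^{-}$. Up to the $\ast$-symmetry these are: $(a^+,b^-,c^+)$ and its relatives $(a^-,b^+,c^-)$, the cases where two arguments have the same sign such as $(a^+,b^+,c^-)$, $(a^+,b^-,c^-)$, $(a^-,a^-,\,\cdot)$ etc., and the all-same-sign case $(a^+,b^+,c^+)$ which is trivially $0=0$. For a representative like $x=a^+$, $y=b^-$, $z=c^+$ one computes $[y,z]_t=[b^-,c^+]_t=L_{bc}+[L_b,L_c]\in\mathrm S_0(J)$, then $[a^+,L_{bc}+[L_b,L_c]]_t=-((L_{bc}+[L_b,L_c])^*\lbar a)^+=-((-L_{bc}+[L_b,L_c])\lbar a)^+=((bc)a-(a(bc)\text{-type terms}))^+$, and similarly expands the right-hand side; the resulting identity in $J$ is exactly one of the LJ-identities \eqref{eq:JorDiasAlg}/\eqref{eq:dva}/\eqref{eq:tri} or a linearization thereof, precisely as in the proofs of Lemma \ref{lem:CommRelations} and Proposition \ref{prop:QDerComm}. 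I expect the main obstacle to be bookkeeping: keeping the signs from $D^*$ straight, and confirming that each of the half-dozen scalar identities in $J$ that pop out is genuinely a consequence of \eqref{eq:JorDiasAlg}. To contain this I would tabulate, once and for all, the four "mixed" products $[a^\pm,b^\mp]_t$ and the eight $\mathrm S_0$-on-$J^\pm$ products in terms of $L$ and $\rbar,\lbar$, then feed them mechanically into the Leibniz identity case by case, citing \eqref{eq:tri} (for products landing via $\rbar$), \eqref{eq:dva} (for products landing via $\lbar$), and the first identity of \eqref{eq:JorDiasAlg} (for the $[L_a,L_b]\rbar c$ vs. $[L_a,L_c]\lbar b$ comparisons) exactly where they were used in Theorem \ref{thm:StructureLeibnizAlg}.
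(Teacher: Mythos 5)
Your overall strategy coincides with the paper's: check the grading by inspection, then verify the Leibniz identity case by case on homogeneous components, quoting Theorem~\ref{thm:StructureLeibnizAlg} for the all-$\mathrm S_0(J)$ case, \eqref{eq:StructureCommutator} for the cases with two arguments in $\mathrm S_0(J)$, and identities of $J$ for the rest; the reduction via the $*$-automorphism is a sensible economy that the paper uses only implicitly.

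There is, however, one step in your reduction that is wrong as stated, and it sits exactly on the hardest case. You claim that every case with one argument in $\mathrm S_0(J)$ and two in $J^{\pm}$ follows from \eqref{eq:StructureCommutator} together with $L_{U\rbar a}=[U,L_a]$ and $L_{U\lbar a}=-[L_a,U]$ ``extended by linearity to $U\in \mathrm S_0(J)$''. That extension fails: for $U=L_b$ one has $L_{U\rbar a}=L_{ba}\in L(J)$, whereas $[L_b,L_a]$ is a quasi-derivation, and these are different elements of $\Diend J$ in general; \eqref{eq:StructureRelations} asserts these identities only for $D\in\QDerr(J)$. Concretely, for $x=a^+$, $y=b^-$, $z=L_c$ the Leibniz identity splits into an $L(J)$-component, which is formal, and a $\QDerr(J)$-component which reads $[L_{ab},L_c]=[L_b,L_{ac}]+[L_a,L_{bc}]$ --- precisely Lemma~\ref{lem:CommRelations}, whose proof consumes both \eqref{eq:dva} and \eqref{eq:tri}; for $z=U\in\QDerr(J)$ one further needs the quasi-derivation property \eqref{eq:QDerDefn} and Lemma~\ref{lem:QDerLeib}. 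Your plan hides all of this input behind a formal module axiom. Relatedly, you attribute \eqref{eq:dva}/\eqref{eq:tri} to the all-$J^{\pm}$ cases, but those only ever require the first identity of \eqref{eq:JorDiasAlg}: your representative $(a^+,b^-,c^+)$ actually closes identically, and $(a^+,b^+,c^-)$ needs only $(ab)c=(ba)c$. The mechanical tabulation you propose at the end would surface the correct dependencies, so the gap is repairable, but the middle reduction cannot be trusted as written.
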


\begin{proof}
It is easy to see from \eqref{eq:TKKoperations} that $\mathrm T(J)$
is indeed a $\mathbb Z_3$-graded algebra with homogeneous components
$J^+$, $\mathrm S_0(J)$, $J^-$.

This is straightforward to compute
that the Leibniz identity \eqref{eq:LeibIdent} holds for the operation $[\cdot, \cdot]_t$.
Let us consider some examples in order to derive necessary relations.

(1) For $x=a^+$, $y= b^+$, $z=c^-$, $a,b,c\in J$, we have
\[
[x,[y,z]_t]_t = [a^+, -L_{bc} + [L_b, L_c]]_t
= (-L_{bc}\lbar a - [L_b, L_c]\lbar a)^+
=( - a(bc) - b(ac) + (ab)c)^+  ,
\]
so
\[
[y,[x,z]_t]_t = (-b(ac) - a(bc) + (ba)c)^+, \quad [[x,y]_t, z]_t =0,
\]
and \eqref{eq:LeibIdent} holds.

(2) For $x=a^+$, $y = b^-$, $a,b\in J$, and $z= U\in \mathrm S_0(J)$,
we have
\[
[x,[y, z]_t]_t = - [a^+, (U\lbar b)^-]_t
  = L _{a(U\lbar b)} - [L_a, L_{U\lbar b}],
\]
\[
[y,[x,z]_t]_t = - L_{b(U^*\lbar a)} - [L_b, L_{U^*\lbar a}],
\]
\[
[[x,y]_t, z]_t = -[L_{ab}, U]_s + [[L_a, L_b],U]_s.
\]
Consider two cases:  $U\in L(J)$, and $U\in \QDerr (J)$.
In the first case, $U=L_c$, $c\in J$, $U^*=-U$, so
\[
 [x,[y, z]_t]_t = L_{a(bc)} - [L_a, L_{bc}],
 \quad
 [y,[x,z]_t]_t = L_{b(ac)} + [L_b, L_{ac}],
\]
\[
 [[x,y]_t, z]_t =  L_{[L_a,L_b]\rbar c} - [L_{ab}, L_c].
\]
Therefore, the required relation follows from
\eqref{eq:LBrack_on_L} and
Lemma \ref{lem:CommRelations}.

In the second case, $U^*=U$, so
\[
[[x,y]_t, z]_t = L_{U\lbar (ab)} + [[L_a, L_b],U].
\]
Then \eqref{eq:LeibIdent} follows from \eqref{eq:StructOperation},
Definition \ref{defn:QuasiDer}, and Theorem~ \ref{thm:StructureLeibnizAlg}.

(3) For $x=U$, $z = V$, $U,V \in \mathrm S_0(J)$,
$y=a^+$, $a\in J$, we have
\[
[x,[y, z]_t]_t = - [U, (V^*\lbar a)^+]_t
= -(U^*\rbar (V^*\lbar a))^+,
\]
\[
[y,[x,z]_t]_t = [a^+, [U,V]_s]_t = -([U,V]_s^*\lbar a)^+,
\]
\[
[[x,y]_t, z]_t = [(U^*\rbar a)^+, V]_t = -(V^*\lbar (U^*\rbar a))^+.
\]
Then \eqref{eq:StructureCommutator} implies the required relation
by means that $U\mapsto U^*$ is an automorphism of $\mathrm S_0(J)$.

For other choice of $x$, $y$, $z$ from homogeneous components of $\mathrm T(J)$,
the Leibniz identity can be verified in a similar way.
\end{proof}

The Leibniz algebra $\mathcal L= \mathrm T(J)$ is called {\em super-structure\/}
algebra of $J$. This is a non-commutative analogue
of the Tits---Kantor---Koecher construction.
Since both $J$ and $\mathcal L$ are 0-dialgebras,
it is reasonable to explore relations between the Jordan
algebra $\bar J$ and the Lie algebra $\bar{\mathcal L}$.

\begin{thm}\label{thm_barfunctor}
If a Jordan dialgebra $J$ contains a bar-unit then
$\overline{\mathrm T(J)}\simeq \mathrm T(\bar J)$.
\end{thm}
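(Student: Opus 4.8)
The plan is to exhibit an explicit surjective homomorphism of Lie algebras $\mathrm T(\bar J)\to\overline{\mathrm T(J)}$ and prove it is injective by a dimension/kernel argument that exploits the bar-unit. First I would fix a bar-unit $e\in J$ (belonging to the associative center of $J$, as guaranteed at the end of Section~\ref{sec3}). Recall that $\overline{\mathrm T(J)}=\mathrm T(J)/\mathrm T(J)_0$ where $\mathrm T(J)_0=\Span\{U\rbar V-U\lbar V\}$, and similarly for $J$; and that the $\mathbb Z_3$-grading on $\mathrm T(J)$ descends to $\overline{\mathrm T(J)}$. The natural candidate map sends $\bar a^{\,+}\mapsto \overline{a^+}$, $\bar a^{\,-}\mapsto\overline{a^-}$, and on the middle component sends $\overline{L_{\bar a\bar b}}\mapsto \overline{L_{ab}}$, $\overline{[L_{\bar a},L_{\bar b}]}\mapsto\overline{[L_a,L_b]}$. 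The first task is to check this is well defined: the defining relations of $\mathrm S_0(\bar J)$ (which by Theorem~\ref{thm:StructureLeibnizAlg} and the Jordan case are just the Jordan operator identities) must hold among the images, and since $\bar J$ is a Jordan algebra this is immediate from the fact that $\overline{\mathrm T(J)}$ is a Lie algebra, i.e. its structure component satisfies exactly the Jordan operator relations.

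Next I would verify this map is a Lie algebra homomorphism. This is a component-by-component check using \eqref{eq:TKKoperations}: the brackets $[\bar a^+,\bar b^-]=-L_{\bar a\bar b}+[L_{\bar a},L_{\bar b}]$ in $\mathrm T(\bar J)$ map to $-\overline{L_{ab}}+\overline{[L_a,L_b]}$, which is exactly the image of $\overline{[a^+,b^-]_t}$; the mixed brackets $[D,a^\pm]$ involve $D\rbar a$ and $D^*\rbar a$, and I would use that in $\overline{\mathrm T(J)}$ the operations $\rbar$ and $\lbar$ on the $J$-part become the single Jordan-bimodule action (because $\bar J$ acts on the image of $J$, and the bar-functor collapses $\rbar$ and $\lbar$ modulo $\mathrm T(J)_0$). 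The point is that $D^*$ and $D$ have the same image in $\overline{\mathrm S_0(J)}$ precisely when $D\in\QDerr$, while for $D=L_x$ the sign flip $D^*=-L_x$ is recorded faithfully on both sides, so the map respects the $\pm$ components correctly.

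Surjectivity is clear since $J^+$, $J^-$ and the generators $L_{ab}$, $[L_a,L_b]$ of $\mathrm S_0(J)$ are all hit. The main obstacle, and the only place where the bar-unit hypothesis is essential, is injectivity. The kernel lands in $\overline{\mathrm S_0(J)}$ (the $J^\pm$ parts map isomorphically onto $\overline{J^\pm}$, which are copies of $\bar J$). So I must show: if a linear combination $\sum L_{a_ib_i}+\sum[L_{c_j},L_{d_j}]$ in $\mathrm S_0(J)$ lies in $\mathrm S_0(J)_0$ whenever the corresponding combination $\sum L_{\bar a_i\bar b_i}+\sum[L_{\bar c_j},L_{\bar d_j}]$ vanishes in $\mathrm S_0(\bar J)$, then... no — rather I must show the map $\overline{\mathrm S_0(J)}\to\overline{\mathrm S_0(J)}$ has trivial kernel. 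Here I would use the bar-unit: for $D\in\mathrm S_0(J)$, apply the element $e$ to recover $D$ from its action, since $L_e$ acts on $J^+$, $J^-$ as (modulo $\mathrm T(J)_0$) the identity-like operator controlled by the bar-unit axioms $e\rbar x=x$, $x\lbar e=x$. Concretely, $[D,e^-]_t=(D\rbar e)^-$ and $[e^+,D]_t=-(D^*\lbar e)^+$ let one read off $D\rbar e$ and $D^*\lbar e$ inside $\overline{J^-}$ and $\overline{J^+}$ respectively; combined with the identities $\rbar=\lbar$ modulo $0$-ideal, an element of $\overline{\mathrm S_0(J)}$ mapping to zero has $\overline{D\rbar J}=\overline{D\lbar J}=0$, hence $D$ is zero in $\Diend J$ already by Definition~\ref{defn:Diend}. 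I expect the delicate bookkeeping to be the interplay of the $D\mapsto D^*$ involution with the $\rbar/\lbar$ collapse, and this is where I would spend the most care; everything else is a routine, if lengthy, diagram chase.
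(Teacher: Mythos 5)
Your overall strategy (an explicit map between $\mathrm T(\bar J)$ and $\overline{\mathrm T(J)}$, with the difficulty concentrated in the middle component) has the right shape, and the paper's proof does essentially this but in the opposite direction: it takes the functorial epimorphism $\tau=\mathrm T(J\to\bar J):\mathrm T(J)\to\mathrm T(\bar J)$, notes that $I_0=\Span\{[x,y]_t+[y,x]_t\}\subseteq\Ker\tau$ because the target is a Lie algebra, and then proves $\Ker\tau\subseteq I_0$. Reversing the direction relocates, but does not remove, the hard step, and your sketch skips it. Concretely: $\mathrm S_0(\bar J)$ is a concrete space of operators on $\bar J$, not an algebra presented abstractly by ``the Jordan operator relations,'' so to define a linear map out of it by $L_{\bar a\bar b}\mapsto\overline{L_{ab}}$, $[L_{\bar a},L_{\bar b}]\mapsto\overline{[L_a,L_b]}$ you must show that \emph{every} linear combination $\sum_i L_{\bar a_i\bar b_i}+\sum_j[L_{\bar c_j},L_{\bar d_j}]$ that vanishes as an operator on $\bar J$ has its lift lying in $I_0$. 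That is exactly the statement $\Ker\tau\cap\mathrm S_0(J)\subseteq I_0$, i.e.\ the entire content of the theorem; it is not ``immediate from $\overline{\mathrm T(J)}$ being a Lie algebra.''

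Moreover, the step you offer in its place is false: if $D\in\mathrm S_0(J)$ has $\overline{D\rbar J}=\overline{D\lbar J}=0$, this only says $D\rbar J,\ D\lbar J\subseteq[J,J]$, not that they vanish, so Definition~\ref{defn:Diend} does not give $D=0$ in $\Diend J$ (and the conclusion you actually need is $D\in I_0$, not $D=0$). The Remark following the theorem exhibits $J=A^{(+)}$ with $\Ker\tau\not\subseteq I_0$, so any argument that does not genuinely exploit the bar-unit must break somewhere. The paper uses the bar-unit $e$ in three places your sketch never reaches: $L(J)\subseteq\mathrm S_0(J)$ via $L_a=\frac12([e^-,a^+]_t-[e^+,a^-]_t)$, which is what puts $[J,J]^{\pm}$ into $I_0$; the identity $a=ea\in[J,J]$ when $aJ, Ja\subseteq[J,J]$, to handle $L_a\in\Ker\tau$; and the fact that $e$ lies in the associative center, which gives $[L_b,L_c]\equiv[L_{be},L_{ce}]\equiv-[L_{ce},L_{be}]\pmod{I_0}$ and hence $[L_b,L_c]\in I_0$. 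You would need to import all three computations to close the gap.
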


\begin{proof}
Suppose $J_1$ and $J_2$ are two
Jordan dialgebras, $\varphi $ is a homomorphism from $J_1$ onto $J_2$.
Then there exists a homogeneous epimorphism of Leibniz algebras
$\tau =\mathrm T(\varphi ): \mathrm T(J_1) \to \mathrm T(J_2)$ given by
\[
\tau (a^{\pm}) = \varphi(a)^{\pm}, \quad
\tau(L_a)  = L_{\varphi(a)},\quad
\tau ([L_a,L_b])= [L_{\varphi(a)}, L_{\varphi(b)}],
\quad a,b\in J_1.
\]
In particular, for a Jordan dialgebra $J$, the natural epimorphism
$J\to \bar J$ gives rise to an epimorphism
$\tau: \mathrm T(J) \to \mathrm T(\bar J)$. Since $\bar J$ is an ordinary
Jordan algebra, $\mathrm T(\bar J)$ is a Lie algebra (the ordinary TKK construction).
Thus,
\[
\Ker \tau \supseteq I_0 : = \Span\{[x,y]_t+[y,x]_t\mid x,y\in \mathrm T(J) \},
\]
where $\overline{\mathrm T(J)}=\mathrm T(J)/I_0$.

Since $\tau $ is a homogeneous homomorphism,
$\Ker \tau =
(\Ker \tau\cap J^+)\oplus (\Ker\tau\cap \mathrm S_0(J) \oplus (\Ker \tau\cap J^-)$.
Note that if $e\in J$ is a bar-unit then
$L(J)\subseteq \mathrm S_0(J)$ since $L_a = \frac{1}{2}([e^-,a^+]_t - [e^+, a^-]_t)$,
$a\in J$.

If $a^+ \in \Ker\tau\cap J^+$, $a\in J$, then
$a=\sum_i [x_i,y_i]\in [J,J]$. Since
$[L_{x_i}, y_i^+]_t = -(x_iy_i)^+$,
$[y_i^+, L_{x_i}]_t = (y_ix_i)^+$
by \eqref{eq:TKKoperations},
$a^+\in I_0$.
In the same way one may show that
$\Ker \tau\cap J^- \subseteq I_0 $.

If $U\in \mathrm \Ker\tau \cap S_0(J)$, $U=L_a + [L_b,L_c]$, then
$\tau(L_a)=\tau([L_b,L_c])= 0$ by the construction of $\mathrm S(J)$.
Since $\tau (L_a) = L_{\bar a}\in \mathrm S_0(\bar J)$, we have
$aJ,Ja \subseteq [J,J]$, in particular,
$a=ea \in [J,J] $. It is easy to see that $L([J,J])\subseteq I_0$.

If $\tau([L_b, L_c])=0 $ then $\tau([L_b, L_{ce}])=0$ since
$[L_b,L_c]-[L_b, L_{ce}]\in [L(J), L([J,J])] \subseteq I_0$.
But since $e$ belongs to the associative center of $J$,
we have $[L_b,L_{ce}]=[L_{be}, L_{ce}] = -[L_{ce}, L_{be}]$.
Therefore, $[L_b,L_c]\in \frac{1}{2} ([L_{be}, L_{ce}]
 + [L_{ce}, L_{be}]) + [L(J),L([J,J])] \in I_0$.

We have proved that $\Ker\tau = I_0$ which implies the claim.
\end{proof}

\begin{rem}
In general, this is not true that $\overline {\mathrm T(J)}
\simeq \mathrm T(\bar J)$. As an example, one may
consider $J=A^{(+)}$, where $A$ is the associative dialgebra
of the form
\[
 A =\begin{pmatrix} 0 & \Bbbk[x] & \Bbbk [x] \\
0  & 0 & \Bbbk \\
0 & 0 & 0\end{pmatrix}
\]
with the following (associative) products:
\[
a(x)\rbar b(x) = a(0)b(x), \quad a(x)\lbar b(x) = a(x)b(0),
\quad a,b \in A.
\]
Then
$[J,J] = x\Bbbk [x] e_{13}\ne 0$, where $e_{13} $ is the matrix
unit. But $J^3 = 0$, so
$I_0\cap J^\pm =0$, but $[J,J]^\pm \subseteq \Ker \tau$.
Therefore,
$\Ker \tau \not\subseteq I_0$.
\end{rem}

Since the structure Leibniz algebra $\mathrm S_0(J)$ does not depend
on the embedding of $J$ into a Jordan conformal algebra, consider
the simplest case $J\hookrightarrow (\Curr \hat J)^{(0)}$
from Theorem \ref{thm:CurrEmbedd}.
Recall that $\hat J = \bar J\oplus J$ is the split null extension
of the Jordan algebra $\bar J$, and the embedding is given by
$a\mapsto \hat a = 1\otimes \bar a + T\otimes a$, $a\in J$.

\begin{thm}
The super-structure Leibniz algebra $\mathrm T(J)$
is isomorphic to a subalgebra of $(\Curr \mathrm T(\hat J))^{(0)}$.
\end{thm}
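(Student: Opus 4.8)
The plan is to use the conformal-algebra TKK construction of \cite{Zel00, Kol09} applied to the current conformal algebra $\Curr \hat J$, and to identify $\mathrm T(J)$ with a piece of the dialgebra $(\Curr \mathrm T(\hat J))^{(0)}$. First I would note that $\hat J = \bar J \oplus J$ is a Jordan algebra, so its ordinary TKK algebra $\mathrm T(\hat J)$ is a Lie algebra, and hence $\Curr \mathrm T(\hat J) = H \otimes \mathrm T(\hat J)$ is a Lie conformal algebra. Its associated dialgebra $(\Curr \mathrm T(\hat J))^{(0)}$ is therefore a Leibniz algebra (by the general functor of Section~\ref{sec3}, applied to the Lie case, which gives Leibniz algebras). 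The goal is to write down an explicit injective homomorphism of Leibniz algebras from $\mathrm T(J)$ into $(\Curr \mathrm T(\hat J))^{(0)}$.

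The natural candidate mimics the embedding $\psi$ of Theorem~\ref{thm:CurrEmbedd}: an element $a \in J$ sits inside $\mathrm T(\hat J)$ as $\hat a = 1 \otimes \bar a + T \otimes a \in \Curr \hat J$, placed in the appropriate $\mathbb Z_3$-homogeneous component. Concretely I would send $a^{\pm} \mapsto \hat a{}^{\pm} \in \Curr \mathrm T(\hat J)$ and extend the middle component via the structure subalgebra: since $\mathrm S_0(J)$ is built (Lemma~\ref{lem:Independence1}, Proposition~\ref{prop:Independence}) from the operators $L_{ab}$ and $[L_a, L_b]$ independently of the embedding, and since in the TKK algebra $\mathrm T(\hat J)$ the bracket $[\hat a{}^+, \hat b{}^-]$ produces exactly such structure elements of $\hat J$, the elements $L_{ab}, [L_a, L_b] \in \mathrm S_0(J)$ map to the corresponding structure elements computed inside $\mathrm T(\hat J)$, again with the current-algebra prefactors $1 \otimes(\cdot) + T \otimes (\cdot)$ governed by \eqref{eq:ConfAx}. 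Then I would check that this map respects all the brackets in \eqref{eq:TKKoperations}: the products $J^+ \cdot J^-$, $\mathrm S_0(J) \cdot J^{\pm}$, and $\mathrm S_0(J) \cdot \mathrm S_0(J)$ each reduce, via \eqref{eq:ConfAx}, \eqref{eq:CurrN-Prod}, and the definition $\rbar = \oo 0$, $\lbar = \{\oo 0 \}$ of the dialgebra $C^{(0)}$, to the corresponding identities already verified for $\mathrm T(\hat J)$ as an ordinary Lie algebra. Injectivity is inherited from injectivity of $\psi$ on $J$ together with injectivity of the structure-algebra construction.

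The main obstacle I expect is the middle component: one must verify that the assignment on $\mathrm S_0(J)$ is well-defined, i.e., that the relations among the $L_{ab}$ and $[L_a,L_b]$ that hold in $\mathrm S_0(J)$ (coming from Lemmas~\ref{lem:CommRelations},~\ref{lem:QDerLeib} and relations~\eqref{eq:StructureRelations}) are matched by the corresponding relations in $\mathrm T(\hat J)$, and conversely that no extra collapsing occurs. The cleanest route is probably to observe that $\mathrm S_0(\hat J)$ (the ordinary structure algebra of the Jordan algebra $\hat J$) receives $\mathrm S_0(J)$ compatibly, using that the embedding $J \hookrightarrow \hat J$ of an LJ-algebra into its split null extension realizes $\Diend_{\Curr \hat J} J$ inside endomorphisms tied to $\hat J$; then $\Curr \mathrm S_0(\hat J) \subseteq \Curr \mathrm T(\hat J)$ and the structure elements of $\mathrm T(J)$ land there. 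Once well-definedness is settled, checking the bracket relations \eqref{eq:TKKoperations} is a finite case analysis entirely analogous to the proof of the preceding theorem, now bookkept through the current-algebra operations $\oo 0$ and $\{\oo 0\}$, and presents no real difficulty.
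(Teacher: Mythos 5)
Your proposal follows essentially the same route as the paper: the paper's proof simply writes down the explicit map $a^{\pm}\mapsto 1\otimes\bar a^{\pm}+T\otimes a^{\pm}$, $L_a\mapsto 1\otimes\ell_{\bar a}+T\otimes\ell_a$, $[L_a,L_b]\mapsto 1\otimes[\ell_{\bar a},\ell_{\bar b}]+T\otimes[\ell_{\bar a},\ell_b]$ and asserts that well-definedness, injectivity, and the homomorphism property are straightforward, which is exactly the verification you outline (your opening appeal to the conformal TKK of \cite{Zel00,Kol09} is unnecessary and, as the introduction notes, unavailable for infinite $J$, but you do not actually use it). Your extra attention to well-definedness on $\mathrm S_0(J)$ is a legitimate point that the paper passes over with ``this is easy to see.''
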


\begin{proof}
Consider the map
$\psi: \mathrm T(J) \to (\Curr \mathrm T(\hat J))^{(0)}$ defined by
\begin{gather}\label{eq:TKK_Current}
a^\pm \mapsto \hat a^\pm = 1\otimes \bar a^\pm + T\otimes a^\pm, \quad a\in J, \\
L_a \mapsto 1\otimes \ell _{\bar a} + T\otimes \ell_a,\quad a\in J, \\
\sum_i [L_{a_i}, L_{b_i}] \mapsto \sum_i (1\otimes [\ell_{\bar a_i}, \ell_{\bar b_i}]
   + T\otimes [\ell_{\bar a_i}, \ell_{b_i}]), \quad a_i,b_i\in J.
\end{gather}
Here $\ell_x$ stands for the operator of left multiplication by $x$ in $\hat J$.

This is easy to see that \eqref{eq:TKK_Current} is a well-defined
injective map. Moreover, this is straightforward to
check that this is a homomorphism of Leibniz algebras.
\end{proof}

The following definition generalizes the similar notion for
Jordan algebras \cite{Jac}.
For a Jordan dialgebra $J$, consider the sequence
\[
J^{(1)}=J, \quad J^{(2)}=J^2, \quad J^{(n+1)}= J^{(n)}J^{(n)}
+ J(J^{(n)}J^{(n)}) + (J^{(n)}J^{(n)})J, \ n> 1.
\]
It follows from \eqref{eq:dva}, \eqref{eq:tri} that all $J^{(n)}$ are ideals of
$J$. If there exists $N\ge 1$ such that $J^{(N)}=0$ then $J$ is said to
be {\em strongly solvable} (or {\em Penico solvable}).
Note that a Jordan dialgebra $J$ is strongly solvable if and only if
so is the Jordan algebra $\bar J$.
Also, $J$ is strongly solvable (or nilpotent) if and only if $\hat J$
is strongly solvable (or nilpotent).

\begin{thm}\label{thm:Solvable}
A Jordan dialgebra $J$ is strongly solvable if and only if
$\mathrm T(J)$ is solvable.
\end{thm}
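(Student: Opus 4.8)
The plan is to reduce the statement to the analogous fact for Jordan algebras via the ``$\hat J$'' construction, using the previously established structural results. First I would recall that strong solvability of $J$ is equivalent to strong solvability of $\bar J$ (stated just above), and also equivalent to strong solvability of $\hat J=\bar J\oplus J$, since $\hat J$ is a split null extension of $\bar J$ by a module (an abelian ideal), so $\hat J^{(2)}\subseteq \bar J^{(2)}\oplus J$ and nilpotent quotients propagate. Then, using the embedding $\mathrm T(J)\hookrightarrow (\Curr \mathrm T(\hat J))^{(0)}$ from the preceding theorem, together with the fact that solvability of a dialgebra $D$ is controlled by solvability of $\bar D$ (since $D_0$ is an abelian-type ideal and $J^{(2i)}\subseteq J^{[i]}\subseteq J^{(i)}$ as noted in Section~\ref{sec4}, so cubic solvability, solvability, and solvability of $\bar D$ all coincide up to index shifts), I would try to show $\overline{\mathrm T(J)}\simeq \mathrm T(\bar J)$ \emph{after passing to $\hat J$}, or at least that $\mathrm T(J)$ is solvable iff $\mathrm T(\hat J)$ is, iff $\mathrm T(\bar J)$ is solvable as a Lie algebra.

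The key steps, in order, are: (1) Observe $\Curr A$ is solvable iff $A$ is (the current construction only tensors with $\Bbbk[T]$, which is commutative, and $(\Curr A)^{(0)}$ has $\bar{(\Curr A)^{(0)}}\simeq A$), so $(\Curr \mathrm T(\hat J))^{(0)}$ is solvable iff $\mathrm T(\hat J)$ is. (2) By the embedding theorem, $\mathrm T(J)$ is a subalgebra of a solvable algebra whenever $\mathrm T(\hat J)$ is solvable, hence is itself solvable; this gives one direction. (3) For the converse, note that the natural epimorphism $\hat J\to \bar J$ (killing the $J$-part) and the inclusion $\bar J\hookrightarrow \hat J$ are both LJ-homomorphisms, and $J$ itself is a homomorphic image/subobject in a chain relating $\mathrm T(J)$, $\mathrm T(\hat J)$, $\mathrm T(\bar J)$; using $\mathrm T(\varphi)$ functoriality (established in the proof of Theorem~\ref{thm_barfunctor}) one gets surjections $\mathrm T(\hat J)\twoheadrightarrow \mathrm T(\bar J)$ and maps relating $\mathrm T(J)$ to these. (4) Invoke the classical fact (Jacobson, \cite{Jac}) that the TKK Lie algebra $\mathrm T(\bar J)$ of a Jordan algebra $\bar J$ is solvable iff $\bar J$ is Penico-solvable. (5) Chase solvability up and down these morphisms: kernels of the epimorphisms are built from $[J,J]$-type and $J^+$-, $J^-$-type pieces which are nilpotent-by-construction inside $\mathrm T$, so solvability of one term forces solvability of the others, modulo bounded index shifts.

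The main obstacle I anticipate is step (3)/(5): unlike the bar-unit case of Theorem~\ref{thm_barfunctor}, in general $\overline{\mathrm T(J)}\not\simeq \mathrm T(\bar J)$ (as the Remark after that theorem shows), so I cannot simply quotient by $I_0$ and land on the Lie TKK algebra. I would instead need to argue directly that $\mathrm S_0(J)$ is solvable iff $\mathrm S_0(\bar J)$ (equivalently $\ell(\bar J)$-type structure algebra) is, which uses Corollary~\ref{cor:4_3} / Lemma~\ref{lem:ZSSS}-style control of $L(J)$ by $L(\bar J)$ plus the relations \eqref{eq:Op-raz}--\eqref{eq:Op-tri}, and that $J^\pm$ contribute only a ``solvable extension'' on top. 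Concretely: if $\mathrm T(J)$ is solvable then so is its ideal $\mathrm S_0(J)$ (well, it's a subalgebra and the $J^\pm$ are abelian $\mod \mathrm S_0(J)$), hence $L(J^2)$ is solvable, hence $J^2$ acts nilpotently, hence (by Theorem~\ref{thm:Theorem4}-type reasoning together with $J^{(N)}$ being ideals) $J$ is strongly solvable; conversely strong solvability of $J$ makes $\bar J$ Penico-solvable, $\mathrm T(\bar J)$ solvable by \cite{Jac}, $\mathrm T(\hat J)$ solvable (its kernel over $\mathrm T(\bar J)$ comes from the nilpotent ideal $J\subset\hat J$, so is a solvable extension), and finally $\mathrm T(J)\subseteq (\Curr \mathrm T(\hat J))^{(0)}$ is solvable. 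Making the ``solvable extension'' bookkeeping precise — tracking how the solvable/nilpotent degree of the $J$-summand in $\hat J$ propagates through the quadratic ideal filtration $J^{(n)}$ and then through the $\mathbb Z_3$-graded bracket of $\mathrm T$ — is the part that needs genuine care rather than routine computation.
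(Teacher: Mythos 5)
Your forward direction is exactly the paper's argument: $J$ strongly solvable $\Rightarrow$ $\hat J$ strongly solvable $\Rightarrow$ $\mathrm T(\hat J)$ solvable by the classical Jordan result $\Rightarrow$ $(\Curr \mathrm T(\hat J))^{(0)}$ is a solvable Leibniz algebra containing $\mathrm T(J)$. That half is fine.

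The converse is where you go wrong, and the problem is that you have misdiagnosed the obstacle. You argue that because $\overline{\mathrm T(J)}\not\simeq \mathrm T(\bar J)$ in general (the Remark after Theorem~\ref{thm_barfunctor}), you ``cannot simply quotient by $I_0$ and land on the Lie TKK algebra,'' and you therefore substitute a direct analysis of $\mathrm S_0(J)$. But the isomorphism is not needed: the functorial epimorphism $\tau=\mathrm T(J\to\bar J):\mathrm T(J)\twoheadrightarrow \mathrm T(\bar J)$ exists for \emph{any} surjection of LJ-algebras, with no bar-unit hypothesis (the bar-unit in Theorem~\ref{thm_barfunctor} is used only to prove $\Ker\tau = I_0$, i.e.\ the reverse containment; $\Ker\tau\supseteq I_0$ always holds). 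A homomorphic image of a solvable Leibniz algebra is solvable, so $\mathrm T(J)$ solvable immediately gives $\mathrm T(\bar J)$ solvable, hence $\bar J$ is Penico-solvable by Jacobson, hence $J$ is strongly solvable by the remark preceding the theorem. This is precisely the paper's converse. Your replacement chain, by contrast, contains an unjustified step: from solvability of $\mathrm S_0(J)$ (or of $L(J^2)$) you conclude that ``$J^2$ acts nilpotently'' and then invoke ``Theorem~\ref{thm:Theorem4}-type reasoning.'' Solvability of a Leibniz algebra of operators does not imply nilpotency of the operators, and Theorem~\ref{thm:Theorem4} concerns finitely generated algebras and yields nilpotency from solvability of $J$ itself, not from solvability of an operator algebra. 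As written, that leg of the argument does not close, so the converse direction of your proposal has a genuine gap — one that disappears entirely once you notice the surjection onto $\mathrm T(\bar J)$ is available unconditionally.
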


\begin{proof}
If $J$ is strongly solvable then $\hat J$ is strongly solvable,
and thus $\mathrm T(\hat J)$, the ordinary TKK
construction, is a solvable Lie algebra.
Therefore, $\Curr \mathrm T(\hat J)$ is a solvable Lie
conformal algebra. Hence, $(\Curr \mathrm T(\hat J))^{(0)}$
is a solvable Leibniz algebra that contains $\mathrm T(J)$ as a
subalgebra.

Conversely, if $\mathrm T(J)$ is solvable then so is the Lie algebra
$\overline{\mathrm T(J)}$. It was shown in the proof of
Theorem \ref{thm_barfunctor} that $\mathrm T(\bar J)$ is always a
homomorphic image of $\overline{\mathrm T(J)}$, hence, it is solvable.
Since $\mathrm T(\bar J)$ is the ordinary TKK construction for a Jordan
algebra, $\bar J$ is strongly solvable \cite{Jac}, therefore,
$J$ is strongly solvable.
\end{proof}

\begin{thm}
A Jordan dialgebra $J$ is nilpotent if and only if
$\mathrm T(J)$ is nilpotent.
\end{thm}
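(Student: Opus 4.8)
The plan is to mirror the structure of the proof of Theorem~\ref{thm:Solvable}, replacing ``solvable'' by ``nilpotent'' throughout, and to use the already-established fact that $J$ is nilpotent if and only if $\hat J$ is nilpotent, together with the embedding $\mathrm T(J)\hookrightarrow (\Curr\mathrm T(\hat J))^{(0)}$.

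For the forward direction, suppose $J$ is nilpotent. Then $\hat J$ is a nilpotent Jordan algebra, so its ordinary TKK construction $\mathrm T(\hat J)$ is a nilpotent Lie algebra (this is classical for Jordan algebras). Hence $\Curr\mathrm T(\hat J) = H\otimes\mathrm T(\hat J)$ is a nilpotent Lie conformal algebra: since the $n$-products in a current conformal algebra vanish for $n>0$ and the $0$-product is just the multiplication in $\mathrm T(\hat J)$, any product of $N$ elements of $\Curr\mathrm T(\hat J)$ lies in $H\otimes(\mathrm T(\hat J))^N$, which is zero once $(\mathrm T(\hat J))^N=0$. Consequently $(\Curr\mathrm T(\hat J))^{(0)}$ is a nilpotent Leibniz algebra --- its $\rbar$- and $\lbar$-products are built from the conformal $0$-product and from multiplication by $T$, so products of length $N$ vanish as well. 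By the previous theorem $\mathrm T(J)$ embeds as a subalgebra, hence is nilpotent.

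For the converse, suppose $\mathrm T(J)$ is nilpotent. Then its quotient $\overline{\mathrm T(J)}$ is a nilpotent Lie algebra, and by the argument used in the proof of Theorem~\ref{thm_barfunctor} the ordinary TKK construction $\mathrm T(\bar J)$ is always a homomorphic image of $\overline{\mathrm T(J)}$; thus $\mathrm T(\bar J)$ is nilpotent. Since $\mathrm T(\bar J)$ is the classical TKK Lie algebra of the Jordan algebra $\bar J$, nilpotency of $\mathrm T(\bar J)$ forces $\bar J$ to be nilpotent (this is the Jordan-algebra analogue of the solvable statement cited from \cite{Jac}). But a nilpotent $\bar J$ gives $J^k\subseteq[J,J]$ for some $k$, hence $J^{\langle k+1\rangle}=0$ after passing through the left-multiplication relation, and by Proposition~\ref{Thm:4_1} left nilpotency of $J$ is equivalent to nilpotency of $J$; so $J$ is nilpotent.

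The main obstacle I anticipate is the very last implication in the converse: deducing nilpotency of the Jordan algebra $\bar J$ from nilpotency of its TKK Lie algebra, and then transferring this back to $J$. The passage from $\bar J$ nilpotent to $J$ nilpotent needs care because LJ-algebras are not power-associative and the solvable degrees $J^{(i)}$ need not be ideals; here one should argue as in Theorem~\ref{thm:Theorem4}, using that $\bar J^{\,2^n}\subseteq\bar J^{\langle n\rangle}$ (cited from \cite{Zh}) to get left nilpotency of $J$, and then invoke Proposition~\ref{Thm:4_1}. The forward direction is essentially bookkeeping about current conformal algebras and the $(0)$-functor, and should present no real difficulty.
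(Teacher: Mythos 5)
Your forward direction is correct and coincides with the paper's, which simply invokes the argument of Theorem~\ref{thm:Solvable} verbatim. The converse, however, departs from the paper and contains a genuine gap. You pass to the Lie quotient $\overline{\mathrm T(J)}$, deduce that $\mathrm T(\bar J)$ and hence $\bar J$ is nilpotent, and then try to lift nilpotency from $\bar J$ back to $J$. That last lift is where the argument breaks. From $\bar J^{\,k}=0$ you get $J^k\subseteq[J,J]$, and since $[J,J]J=0$ this gives $\ell(J^k)=0$; but $J^{\langle k+1\rangle}=\ell_{a_1}\cdots\ell_{a_k}(J)$ with the $a_i$ \emph{arbitrary} elements of $J$, not elements of $J^k$, so the step ``$J^k\subseteq[J,J]$, hence $J^{\langle k+1\rangle}=0$'' is a non sequitur. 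What you actually need is nilpotency of the associative operator algebra $\ell(J)$ acting on $J$, and this does not follow from nilpotency of $\bar J$: the operators $\ell_a$ act on the bimodule $J$, not on $\bar J$, and the inclusion $\bar J^{\,2^n}\subseteq\bar J^{\langle n\rangle}$ together with Proposition~\ref{Thm:4_1} only converts left nilpotency of $J$ into nilpotency of $J$ --- it does not produce left nilpotency in the first place. This is exactly the phenomenon that forces the paper to use \emph{strong} solvability rather than solvability in Theorem~\ref{thm:Solvable}, and that forces the finite-generation hypothesis in Theorem~\ref{thm:Theorem4}: properties of $\bar J$ alone do not control the action on $J$. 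A secondary weakness is that the assertion ``nilpotency of the classical TKK Lie algebra $\mathrm T(\bar J)$ forces nilpotency of $\bar J$'' is used without proof; it is true, but establishing it requires essentially the same computation that the paper carries out directly.

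The paper's converse avoids the quotient entirely. It proves by induction on $n$ that $a\in J^n$ implies $L_a\in\mathcal L^{\lceil n/2\rceil}$ and $a^{\pm}\in\mathcal L^{\lceil n/2\rceil}$, writing $a=bc$ with $b\in J^s$, $c\in J^{n-s}$ and using $L_a=\frac{1}{2}([b^-,c^+]_t-[b^+,c^-]_t)$ and $a^{\pm}=\mp[L_b,c^{\pm}]_t$. Hence $\mathcal L^N=0$ forces $J^{2N}=0$. This works with the lower central series of the Leibniz algebra $\mathrm T(J)$ itself and retains exactly the information about the $J^{\pm}$ components that your passage to $\overline{\mathrm T(J)}$ discards. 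You should replace the second half of your argument with this induction.
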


\begin{proof}
Let $\mathcal L$ stands for the TKK construction of an Jordan dialgebra $J$.

If $J$ is nilpotent then $\mathrm T(J) $ is nilpotent by the very
same reasons as stated in the proof of Theorem \ref{thm:Solvable}.

Conversely,
if $a\in J^n$, $n\ge 2$, then
\[
L_a \in \mathcal L_{\lceil n/2 \rceil}^0, \quad a^{\pm} \in \mathcal L_{\lceil n/2 \rceil }^{\pm},
\]
where $\lceil n/2 \rceil $ stands for the upper integral part of $n/2$.
Indeed, for $n=2$ the statement is trivial.
If it is true for all $m<n$, then for
$a\in J^{n}$, $a=bc$, $b\in J^s$, $c\in J^{n-s}$,
$s=1,\dots, n-1$, we have
\[
L_{a} = \frac{1}{2} ([b^-,c^+]_t - [b^+,c^-]_t)
\in [\mathcal L^{\lceil s/2 \rceil },\mathcal L^{\lceil (n-s)/2 \rceil } ]_t
+
[\mathcal L^{\lceil (n-s)/2 \rceil },\mathcal L^{\lceil s/2 \rceil } ]_t
\subseteq \mathcal L^{\lceil n/2\rceil }
\]
since $\lceil s/2 \rceil + \lceil (n-s)/2 \rceil \ge \lceil n/2 \rceil  $.
Similarly,
$a^\pm =
\mp [L_b, c^{\pm}]_t \in [\mathcal L^{\lceil s/2 \rceil }, \mathcal L^{
 \lceil (n-s)/2 \rceil } ]_t
\subseteq \mathcal L^{\lceil n/2 \rceil }$.

Hence, if $\mathcal L=\mathrm T(J)$ is nilpotent then so is $J$.
\end{proof}

\subsection*{Acknowledgements}
This work was partially supported by
RFBR 09-01-00157, SSc 344.2008.1,
SB RAS Integration project N~97, MD-2438.2009.1,
and
by ADTP ``Development of the Scientific Potential of Higher
School'' of the Russian Federal Agency for Education (Grant
2.1.1.419).
The authors are very grateful to Viktor Zhelyabin and Alexander Pozhidaev
for helpful discussions and valuable comments, and to the referee for pointing
out some faults in the initial version of the manuscript.

\end{document}